\newcommand{\set}[2]{\left\{ #1 \mid #2 \right\}}
\newcommand{\adjunction}[4]{\xymatrix{ #1 \ar@<1ex>[rr]^-{#3} && #2 \ar@<1ex>[ll]^-{#4}}}
\newcommand{\SDR}[5]{\xymatrix{*[r]{#1} \ar@<1ex>[r]^-{#3} \ar@(ul,dl)[]_{#5} & #2 \ar@<1ex>[l]^-{#4}}}
\newcommand{\bigSDR}[5]{\xymatrix{*[r]{#1} \ar@<1ex>[rr]^-{#3} \ar@(ul,dl)[]_{#5} && #2 \ar@<1ex>[ll]^-{#4}}}
\newcommand{\bigbigSDR}[5]{\xymatrix{*[r]{#1} \ar@<1ex>[rrr]^-{#3} \ar@(ul,dl)[]_{#5} &&& #2 \ar@<1ex>[lll]^-{#4}}}
\newcommand{\Sing}{\operatorname{Sing}}
\newcommand{\HH}{\operatorname{H}}
\newcommand{\rHH}{\widetilde{\HH}{}^*}
\newcommand{\tensor}{\otimes}
\newcommand{\Hom}{\operatorname{Hom}}
\newcommand{\CC}{\mathbb{C}}
\newcommand{\QQ}{\mathbb{Q}}
\newcommand{\RR}{\mathbb{R}}
\newcommand{\ZZ}{\mathbb{Z}}
\newcommand{\LL}{\mathbb{L}}
\newcommand{\BB}{B}
\newcommand{\Aut}{\operatorname{Aut}}
\newcommand{\tAut}{\widetilde{\Aut}}
\newcommand{\aut}{\operatorname{aut}}
\newcommand{\Map}{\operatorname{Map}}
\newcommand{\Emb}{\operatorname{Emb}}
\newcommand{\Top}{Top}
\newcommand{\Bun}{\operatorname{Bun}}
\newcommand{\rank}{\operatorname{rank}}
\newcommand{\MC}{\mathsf{MC}}
\newcommand{\gl}{\mathfrak{g}}
\newcommand{\im}{\operatorname{im}}
\newcommand{\spann}{\operatorname{span}}
\newcommand{\colim}{\operatorname{colim}}
\newcommand{\hocolim}{\operatorname{hocolim}}
\newcommand{\MT}{MT}
\newcommand{\Diff}{\operatorname{Diff}}
\newcommand{\tDiff}{\widetilde{\Diff}}
\newcommand{\Wh}{\operatorname{Wh}}
\newcommand{\A}{A}
\newcommand{\K}{K}
\newcommand{\Grass}{\operatorname{Grass}}
\newcommand{\Th}{\operatorname{Th}}
\newcommand{\Spin}{\operatorname{Spin}}
\newcommand{\Ss}{\mathcal{S}}
\newcommand{\Ssp}{\underline{\underline{\mathcal{S}}}}
\newcommand{\Lsp}{\mathbb{L}}
\newcommand{\Fr}{\operatorname{Fr}}
\newcommand{\CP}[1]{\CC \hspace{-2.5pt} \operatorname{P}^{#1}}
\newcommand{\interior}{\operatorname{int}}
\newcommand{\Ext}{\operatorname{Ext}}
\newcommand{\Sp}{\operatorname{Sp}}
\newcommand{\SO}{\operatorname{SO}}
\newcommand{\OO}{\operatorname{O}}
\newcommand{\GL}{\operatorname{GL}}
\newcommand{\dquot}{/\hspace{-3pt}/}
\newtheorem{theorem}{Theorem}[section]
\newtheorem{proposition}[theorem]{Proposition}
\newtheorem{lemma}[theorem]{Lemma}
\theoremstyle{definition}
\newtheorem{remark}[theorem]{Remark}
\title{Homological stability of diffeomorphism groups}
\author{Alexander Berglund and Ib Madsen}
\thanks{The first author was supported by the Danish National Research Foundation (DNRF) through the Centre for Symmetry and Deformation. The second author was supported by ERC adv grant no. 228082.}
\begin{document}

\begin{abstract}
In this paper we prove a stability theorem for block diffeomorphisms of $2d$-dimensional manifolds that are connected sums of $S^d\times S^d$. Combining this with a recent theorem of S. Galatius and O. Randal-Williams and Morlet's lemma of disjunction, we determine the homology of the classifying space of their diffeomorphism groups relative to an embedded disk in a stable range.
\end{abstract}

\maketitle

\begin{center} {\bf Dedicated to Dennis Sullivan on the occasion of his 70th birthday} \end{center}

\section{Introduction} \label{sec:intro}
The traditional method to obtain homotopical and homological information about diffeomorphism groups of high dimensional smooth manifolds is a two step procedure: the surgery exact sequence handles the larger group of block diffeomorphisms and Waldhausen's K-theory of spaces connects block diffeomorphisms and actual diffeomorphisms. In practice one is forced to retreat to rational information. The method was used by Farrell and Hsiang \cite{FH} to evaluate the rational homotopy groups of the orientation preserving diffeomorphism groups of spheres in a modest range of dimensions;
\begin{equation} \label{eq:bdiffsn}
\pi_k(\BB\Diff(S^n))\tensor \QQ = \left\{ \begin{array}{ll} \QQ, & \mbox{$k \equiv 0 \, (4)$, $n$ even} \\ \QQ\oplus \QQ, & \mbox{$k\equiv 0 \, (4)$, $n$ odd} \\ 0, & \mbox{otherwise.} \end{array} \right.
\end{equation}
for $0<k<\frac{n}{6}-7$. The orthogonal group $SO(n+1)$ is a subgroup of $\Diff(S^n)$, and $\pi_kSO(n+1)$ accounts for one $\QQ$ when $k\equiv 0 \,(4)$. The second $\QQ$ in line two above is of a different nature. It comes from the connection between Waldhausen's algebraic K-theory of spaces and the homotopy theory of the homogeneous space $\tDiff(S^n)/\Diff(S^n)$ of block diffeomorphisms modulo diffeomorphisms. This is most clearly expressed in the following theorem from \cite{WW}:
$$\pi_k \tDiff(M^n)/\Diff(M^n) \cong \pi_{k+1} (\Wh(M^n)\dquot \ZZ/2),\quad k \ll n.$$
Here $\Wh(M^n)$ is a factor in Waldhausen's $\A(M)$,
$$\A(M) \cong Q(M_+) \times \Wh(M),\quad Q = \Omega^\infty \Sigma^\infty,$$
cf. \cite{JRW}. For $k<n$,
$$\pi_k \A(S^n)\tensor \QQ \cong \pi_k \A(*)\tensor \QQ \cong \pi_k \K(\ZZ) \tensor \QQ,$$
so one $\QQ$ in the second line of \eqref{eq:bdiffsn} is represented by the Borel regulator. In more geometric terms,
$$\pi_k \BB\Diff(D^n,\partial)\tensor \QQ \cong \pi_{k+1} \K(\ZZ)\tensor \QQ$$
for odd $n\gg k$.

In general, $\A(M)$, and more generally algebraic K-theory of ring spectra, is very hard to evaluate explicitly, although the topological cyclic homology functor of \cite{BHM} gives a description of the homotopy fiber of the ``linearization'' map $\A(M)\rightarrow \K(\ZZ[\pi_1M])$ and thus reduces the calculation of $\A(M)$ to Quillen's space $K(\ZZ[\pi_1 M])$ whose homotopy groups are the higher algebraic K-groups of the integral group ring of the fundamental group of $M$, cf. \cite{M,DGM}.

With the solution of the generalized Mumford conjecture \cite{MW} a new method was introduced to study diffeomorphism groups, based on embedded surfaces and the Pontryagin-Thom collapse map. \cite{MW} only treated surfaces but the viewpoint was generalized in \cite{GMTW} where the homotopy type of the embedded cobordism category was determined. Most recently, Galatius and Randal-Williams used surgery techniques, combined with the results from \cite{GMTW} and the group completion theorem, to determine the ``stable'' homology of diffeomorphism groups of $(d-1)$-connected $2d$-dimensional compact manifolds. We recall their main result. Let
\begin{equation} \label{eq:(2)}
BO(2d)[k,\infty) \stackrel{\theta_k}{\rightarrow} BO(2d)
\end{equation}
be the $(k-1)$-connected cover of $BO(2d)$, i.e., a Serre fibration with
$$
(\theta_k)_*\colon \pi_i(BO(2d)[k,\infty)) \rightarrow \pi_i(BO(2d))
$$
an isomorphism when $i\geq k$ and $\pi_i(BO(2d)[k,\infty)) = 0$ for $i<k$. There is a spectrum $\MT^{\theta_k}(2d)$ associated with $\theta_k$, namely the Thom spectrum of $-\theta_k^*(U_{2d})$ where $U_{2d}$ is the tautological $2d$-dimensional vector bundle over $BO(2d)$. In more detail, $BO(2d)$ is the colimit (union) of the Grassmannians $\Grass_{2d}(\RR^N)$ of $2d$-dimensional linear subspaces of $\RR^N$ as $N\rightarrow \infty$. Let
$$\theta_k\colon B_k(N)\rightarrow \Grass_{2d}(\RR^N)$$
denote the fibration induced from \eqref{eq:(2)}. There are two basic vector bundles over the Grassmannian, the tautological $2d$-dimensional vector bundle $U_{2d}(N)$ and its $(N-2d)$-dimensional complement $U_{2d}(N)^\perp$. The Thom spaces (one point compactifications) give rise to the spectrum $\MT^{\theta_k}(2d)$ with structure maps
$$\Th(\theta_k^*(U_{2d}(N)^\perp))\wedge S^1 \rightarrow \Th(\theta_k^*(U_{2d}(N+1)^\perp)).$$
More precisely, the $N$'th space of $\MT^{\theta_k}(2d)$ is the Thom space of $\theta_k^*(U_{2d}(N)^\perp)$. The associated infinite loop space is by definition
\begin{equation} \label{eq:(3)}
\Omega^\infty \MT^{\theta_k}(2d) = \hocolim_N \Omega^N \Th(\theta_k^*(U_{2d}(N)^\perp)).
\end{equation}

Just as surfaces come in two varieties, orientable or non-orientable, so do $(d-1)$-connected $2d$-dimensional manifolds $E$ for $d>1$. The tangent bundle $TE$ is represented by a map $\tau\colon E\rightarrow BO(2d)$, and we may consider
$$\tau_*\colon \pi_d E \rightarrow \pi_d BO(2d) \cong \pi_d BO.$$
The target is a cyclic group. Let $f = f(E)$ be a generator. The case $f(E) = 0$ corresponds to orientable surfaces and $f(E) \ne 0$ to non-orientable surfaces when $d=1$. Since $\pi_dBO \cong \pi_d BO(d+1)$, $f$ gives rise to a $(d+1)$-dimensional vector bundle over $S^d$. Its associated sphere bundle is denoted by $K$. Notice that $K = S^d\times S^d$ if $f = 0$ and otherwise $K$ is a $2d$-dimensional ``Klein bottle''. Consider the string
$$B\Diff_D(E)\rightarrow B\Diff_D(E\#K) \rightarrow \ldots \rightarrow B\Diff_D(E\#gK)\rightarrow \ldots,$$
where $D\subset E$ is a $2d$-dimensional disk and $\Diff_D(E)$ etc denotes the diffeomorphisms that fix $D$ pointwise. Set
$$\theta_f = \left\{ \begin{array}{cc} \theta_{d+1}, & \mbox{if $f=0$} \\ \theta_d, & \mbox{if $f\ne 0$.} \end{array} \right.$$
The main result of \cite{GR-W} is
\begin{theorem}[Galatius, Randal-Williams] \label{thm:GRW}
For $d\ne 2$ and any $(d-1)$-connected $2d$-dimensional closed manifold $E$,
$$\colim_g \HH_*(B\Diff_D(E\#gK);\ZZ) \cong \HH_*(\Omega_0^\infty \MT^{\theta_f}(2d);\ZZ),$$
where the subscript in the target indicates the connected component of the constant loop.
\end{theorem}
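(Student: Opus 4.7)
The plan is to combine parameterized Pontryagin--Thom theory with the cobordism category of \cite{GMTW}, a group-completion argument, and a homological stability result, following and generalizing the template established in \cite{MW} for surfaces. The overall strategy is to construct a natural map
$$\alpha_g \colon B\Diff_D(E\#gK) \longrightarrow \Omega^\infty_0 \MT^{\theta_f}(2d)$$
(the scanning/Pontryagin--Thom map, built from families of $\theta_f$-structured manifolds embedded in Euclidean space) and to prove that it induces an isomorphism on homology in the colimit over $g$. This splits the argument into two essentially independent parts: identifying the stable homology via group completion, and proving homological stability so that the colimit equals the individual $H_*(B\Diff_D(E\#gK))$ in a range.

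For the stable-homology calculation I would work inside the cobordism category $\mathscr{C}_{\theta_f,2d}$ of $\theta_f$-structured $(2d-1)$-manifolds and $\theta_f$-structured $2d$-cobordisms. The main result of \cite{GMTW} identifies $B\mathscr{C}_{\theta_f,2d}\simeq\Omega^{\infty-1}\MT^{\theta_f}(2d)$. The endomorphism monoid of the empty object is the topological monoid whose path components are the $B\Diff^{\theta_f}(W)$ over all closed $\theta_f$-structured $W^{2d}$. Applying the McDuff--Segal group completion theorem to this monoid (once restricted to the submonoid generated by $E\#gK$ with its $\theta_f$-structure) would identify the group completion with $\Omega B\mathscr{C}_{\theta_f,2d}\simeq \ZZ\times\Omega^\infty_0\MT^{\theta_f}(2d)$. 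Here the choice $\theta_f = \theta_{d+1}$ or $\theta_d$ is dictated by the requirement that $E\#gK$ admit such a structure: $(d-1)$-connectedness of $E$ together with the definition of $f(E)$ ensures that the structure exists and is essentially unique up to a contractible choice, so the component level answer comes out correctly.

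The second pillar is homological stability: for $d\ne 2$ the map $B\Diff_D(E\#gK)\to B\Diff_D(E\#(g+1)K)$ induced by boundary connected sum with $K\setminus\interior(D^{2d})$ should be a homology isomorphism in degrees $\ast \le (g-c)/2$ for some constant $c$. The approach I would take here is Harer-style: build a semi-simplicial resolution of $B\Diff_D(W)$ whose $p$-simplices parameterize $(p+1)$-tuples of disjoint embeddings of a standard handle (a punctured copy of $K$) into $W$, analyze the associated spectral sequence, and reduce everything to high connectivity of a ``complex of embedded handles''. High connectivity is proved by surgery and handle-trading arguments using the $(d-1)$-connectivity of $E\#gK$ to kill obstructions. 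Given stability and the group-completion identification, the map $\alpha_g$ factors through the $g$-th ``stable'' stage and the two computations match.

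The hard part is unquestionably the homological stability step: showing high connectivity of the resolution requires the Whitney trick and related surgery moves, which is exactly what fails in the middle dimension $2d=4$ (hence the hypothesis $d\ne 2$). Everything else --- constructing $\alpha_g$, applying \cite{GMTW}, and invoking group completion --- is formal once the category of $\theta_f$-structured manifolds is set up carefully. A secondary subtlety is matching components and $\theta_f$-structures on $E\#gK$ with the component of $\Omega^\infty \MT^{\theta_f}(2d)$ hit by $\alpha_g$; this is a bookkeeping issue about Euler characteristics and the generator $f=f(E)$ of $\pi_d BO$, rather than a serious obstruction.
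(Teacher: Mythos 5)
First, a remark on the comparison itself: the paper does not prove Theorem \ref{thm:GRW} at all --- it is quoted from \cite{GR-W} as an input --- so your proposal has to be measured against Galatius and Randal-Williams' own argument. Your first pillar (a scanning/Pontryagin--Thom map into $\Omega^\infty_0\MT^{\theta_f}(2d)$, the cobordism category equivalence $B\mathscr{C}_{\theta_f}\simeq \Omega^{\infty-1}\MT^{\theta_f}(2d)$ from \cite{GMTW}, and McDuff--Segal group completion) is the correct skeleton. But your second pillar is a misreading of what the theorem asserts. The statement concerns the \emph{colimit} over $g$, i.e.\ the homology of the mapping telescope of $-\#K$, and the group-completion theorem identifies this directly with $\HH_*(\Omega_0 B\mathscr{M})$ for the relevant monoid $\mathscr{M}$; no homological stability is required. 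Indeed no such stability was known when \cite{GR-W} was written --- producing a stability range (rationally) is precisely the point of the present paper, as the sentence following Theorem \ref{thm:GRW} makes explicit --- so a proof of Theorem \ref{thm:GRW} that takes stability as an input would be both anachronistic and, relative to this paper, circular.

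More seriously, by relocating the Whitney trick into a (non-existent) stability resolution, you have omitted the actual technical core of \cite{GR-W}, which you dismiss as ``formal once the category of $\theta_f$-structured manifolds is set up carefully.'' The group-completion step only sees the endomorphism monoid generated by $E\#gK$, and a priori its group completion has no reason to exhaust all of $\Omega^\infty_0\MT^{\theta_f}(2d)$: the heart of the proof is the parametrized surgery argument (``surgery on objects'' and ``surgery on morphisms'') showing that the full cobordism category can be replaced, without changing its classifying space, by the subcategory of $(d-1)$-connected objects and of morphisms that are highly connected relative to their outgoing boundary. This reduction is what makes the small monoid cofinal and the scanning map a stable homology isomorphism, and it is where the hypothesis $d\ne 2$ genuinely enters (disjoint embedding of surgery data by the Whitney trick, and cancellation of $S^d\times S^d$ summands). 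As written, your argument constructs $\alpha_g$ and asserts the conclusion, but the bridge from the monoid to the whole infinite loop space is missing.
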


For $d=1$, this is the generalized Mumford conjecture from \cite{MW}. The theorem raises the obvious question if there is a stability range for the colimit, depending on $g$. This is the case when $d=1$ by \cite{Bo,Wahl}. We introduce the notation
\begin{equation} \label{eq:(4)}
M_g = (S^d\times S^d)\# \ldots \# (S^d\times S^d),\quad \mbox{$g$ summands.}
\end{equation}

Our main result is the following
\begin{theorem} \label{thm:main}
For $d>2$,
$$\HH_k(B\Diff_D(M_g^{2d});\QQ) \rightarrow \HH_k(B\Diff_D(M_{g+1}^{2d});\QQ)$$
is an isomorphism provided $k< \min(d-2,\frac{1}{2}(g-5))$.
\end{theorem}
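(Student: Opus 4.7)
The strategy is to reduce stability for $\Diff_D(M_g)$ to stability for the block diffeomorphism groups $\tDiff_D(M_g)$ via Morlet's lemma of disjunction, and then to prove rational stability for the block diffeomorphisms using surgery theory combined with the rational homotopy theory of classifying spaces of homotopy automorphisms.

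First I would invoke Morlet's lemma of disjunction together with the smoothing-theoretic identification of the homotopy fiber of $B\Diff_D(M)\to B\tDiff_D(M)$ in terms of concordance spaces. In the dimension range relevant here this fiber is rationally highly connected, giving a rational isomorphism
$$\HH_k(B\Diff_D(M_g);\QQ)\cong \HH_k(B\tDiff_D(M_g);\QQ) \quad \text{for } k<d-2.$$
This is the $k<d-2$ half of the claimed range; it then suffices to prove that
$$\HH_k(B\tDiff_D(M_g);\QQ)\to \HH_k(B\tDiff_D(M_{g+1});\QQ)$$
is an isomorphism for $k<\tfrac{1}{2}(g-5)$.

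For this I would use the block surgery fibration
$$\tilde{\Ss}_D(M_g)\to B\tDiff_D(M_g)\to B\tAut_D(M_g),$$
whose fiber is a block structure space. Rationally, by Sullivan--Wall surgery theory, $\tilde{\Ss}_D(M_g)$ is controlled by $\Map_D(M_g,G/O)$; since $G/O$ is rationally $\prod_i K(\QQ,4i)$ and surgery obstructions for simply connected manifolds are rationally captured by signatures, this fiber is generated by explicit characteristic classes whose behaviour under the stabilization $M_g\to M_{g+1}$ can be analyzed directly. The main content therefore lies in the base $B\tAut_D(M_g)$. Since $M_g\setminus D\simeq \bigvee_{2g}S^d$ is formal and coformal, its Quillen Lie model is the free graded Lie algebra $\LL_g$ on $2g$ generators in degree $d-1$, and the attaching map of the top cell is represented by the Maurer--Cartan element $\omega_g=\sum_{i=1}^g [x_i,y_i]$ encoding the intersection form. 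Standard rational-homotopy-theoretic machinery for classifying spaces of homotopy automorphisms (in the spirit of Tanré and Sullivan) then identifies the rational cohomology of the universal cover of $B\tAut_D(M_g)$ with the Chevalley--Eilenberg cohomology of the positive $\omega_g$-preserving derivation Lie algebra $\Der^+(\LL_g,\omega_g)$.

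The hard part will be combining this with the action of the arithmetic mapping class group $\pi_0\tAut_D(M_g)$ --- symplectic for $d$ odd and orthogonal for $d$ even --- and showing that the resulting invariants stabilize in the required range. Concretely, one must prove that the inclusion $(\LL_g,\omega_g)\hookrightarrow (\LL_{g+1},\omega_{g+1})$ induces an isomorphism on $\pi_0\tAut_D$-invariant Chevalley--Eilenberg cohomology in the range $k<\tfrac{1}{2}(g-5)$. This requires two intertwined stability theorems: a representation-theoretic stability result for the Chevalley--Eilenberg cochains of $\Der^+(\LL_g,\omega_g)$ as rational $\Sp_{2g}(\ZZ)$- or $\OO_{g,g}(\ZZ)$-modules, together with Borel-type stability for the cohomology of these arithmetic groups with coefficients in polynomial representations. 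Marrying these via the Serre spectral sequence of the fibration with base $B\pi_0\tAut_D(M_g)$, and extracting the precise stable range $k<\tfrac{1}{2}(g-5)$, is the main technical obstacle and the expected core of the argument.
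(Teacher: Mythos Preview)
Your overall architecture matches the paper: reduce from $\Diff_D$ to $\tDiff_D$ via Morlet, analyse $B\tDiff_D$ through the fibration with base $B\tAut_{\partial N}(N)$ and fibre the block structure space, identify the fibre rationally via surgery, and then invoke homological stability for the arithmetic group acting on the fibre homology. Two points deserve correction, one minor and one substantial.

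\smallskip
\textbf{Morlet step.} Your claim that the fibre of $B\Diff_D(M_g)\to B\tDiff_D(M_g)$ is itself rationally highly connected is not correct: already for the disk, $\tDiff_\partial(D^{2d})/\Diff_\partial(D^{2d})$ carries the rational homotopy of $\Diff_\partial(D^{2d})$, which is governed by algebraic $K$-theory and is nonzero in the range in question. What Morlet's lemma gives (and what the paper uses) is that the map from $\tDiff_\partial(D_0)/\Diff_\partial(D_0)$ to $\tDiff_D(M_g)/\Diff_D(M_g)$ is $2(d-2)$-connected, so the \emph{stabilisation map on fibres} is $2(d-2)$-connected. Combined with block stability, a Serre spectral sequence comparison then yields stability for $B\Diff_D$. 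You get the right conclusion, but not via an absolute isomorphism $\HH_k(B\Diff_D)\cong \HH_k(B\tDiff_D)$.

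\smallskip
\textbf{The base $B\tAut$.} Here you are making the problem much harder than it is and misidentifying the ``main technical obstacle''. You propose to model the universal cover of $B\tAut$ by the Chevalley--Eilenberg complex of $\Der^+(\LL_g,\omega_g)$ and then prove a representation-stability theorem for these cochains. That programme is the content of the authors' \emph{sequel}, not of this paper. The point here is that the bound $k<d-2$ makes all of this unnecessary: the rational homotopy calculation of $\aut_{\partial N}(N)$ shows $\pi_k(\aut_{\partial N}(N))\otimes\QQ=0$ for $0<k<d-1$, so in base degree $p<d-1$ the map $B\tAut_{\partial N}(N)\to B\pi_0\tAut_{\partial N}(N)$ is a rational homology isomorphism. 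Thus in the claimed range the $E^2$-term of the Serre spectral sequence reduces to group homology of the arithmetic group $\Aut(H(\ZZ^g,(-1)^d),q,\alpha)$ with coefficients in the fibre homology $\Lambda\bigl(\HH^d(M_g;\QQ)\otimes\pi_{*+d}(G/O)\bigr)$, which is a polynomial functor of the standard representation. Charney's stability theorem for $\Sp_{2g}(\ZZ)$ and $\OO_{g,g}(\ZZ)$ with such coefficients then gives the range $k<\tfrac12(g-5)$ directly. No derivation-Lie-algebra stability is needed; the $d-2$ in the statement is precisely the price paid for this shortcut.
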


We believe that there is a similar stability result in the ``unoriented'' case, where $M_g$ is replaced with
$$N_g = K\#\ldots \# K,\quad \mbox{$g$ summands,}$$
and $K$ is a generalized Klein bottle, i.e., the sphere bundle of the vector bundle resprented by a non-trivial map $f\colon S^d\rightarrow BO(d+1)$.

\begin{remark} \label{rmk:1.3}
Theorem \ref{thm:main} implies a stability result for the spaces in Theorem \ref{thm:GRW} when $E$ is ``oriented''. This follows because for a given $E$ there exists an $F$ with $E\# F \# M_h \cong M_g$ for suitable $h$ and $g$.
\end{remark}

The rational cohomology of the right hand side of Theorem \ref{thm:GRW} is easily displayed. First recall the rational homotopy type of $BSO(2d)$ is
$$BSO(2d)_\QQ \simeq K(\QQ,2d) \times \prod_{\ell=1}^{d-1} K(\QQ,4\ell)$$
with the map given by the Euler class and the Pontryagin classes of the universal $2d$-dimensional vector bundle. This implies that
\begin{equation}
BO(2d)[d+1,\infty)_\QQ \simeq K(\QQ,2d)\times \prod_{\ell = \lceil \frac{d+1}{4} \rceil}^{d-1} K(\QQ,4\ell).
\end{equation}
The Thom isomorphism theorem calculates the rational cohomology of the spectrum $\MT^\theta(2d)$,
$$\HH^k(\MT^\theta(2d);\QQ) \cong \HH^{k-2d}(BO(2d)[d+1,\infty);\QQ).$$
Indeed, by definition
\begin{align*}
\HH^k(\MT^\theta(2d)) & = \lim \HH^{k+N}(\MT^\theta(2d)_N)) = \lim \HH^{k+N}(\Th(\theta_N^* U_{2d}(N)^\perp)) \\
& = \lim \HH^{k-2d}(\Grass_{2d}(\RR^N)) = \HH^{k-2d}(BO(2d)[d+1,\infty)),
\end{align*}
where the inverse limit is for $N\rightarrow \infty$. The rational cohomology of the connected component $\Omega_0^\infty \MT^\theta(2d)$ can be expressed as
\begin{equation} \label{1.5}
\HH^*(\Omega_0^\infty \MT^\theta(2d);\QQ) = \Lambda (\HH^{*>2d}(BO(2d)[d+1,\infty);\QQ)[-2d]).
\end{equation}
Here $\Lambda$ denotes the free graded commutative algebra of the stated graded vector space. The bracket $[-2d]$ is the notation for shifting down grading by $2d$. For example,
$$\HH^*(\Omega_0^\infty\MT^\theta(2);\QQ) = \Lambda(\spann_\QQ \set{x_i}{\deg x_i = 2i > 2}[-2])$$
is a polynomial algebra in generators of degrees 2, 4, 6 ..., and
$$\HH^*(\Omega_0^\infty \MT^\theta(6);\QQ) = \Lambda(\spann_\QQ \set{x_{ijk}}{\deg x_{ijk} = 4i+6j+8k>6}[-6])$$
because the rational cohomology algebra of $BO(6)[4,\infty) = B\Spin(6)$ is a polynomial algebra in the first two Pontryagin classes and the Euler class. In general, the rational cohomology of $BO(2d)[d+1,\infty)$ is concentrated in even dimensions and the algebra in \eqref{1.5} is a polynomial algebra.

The proof of Theorem \ref{thm:main} uses rational homotopy theory, the surgery exact sequence and Morlet's lemma of disjunction. Let $M$ be a $(d-1)$-connected, $2d$-dimensional closed manifold. By combining Quillen's and Sullivan's models for rational homotopy and Koszul duality, we derive an explicit formula for the rational homotopy groups of the space of homotopy self-equivalences $\aut(M)$ in the case when $d\geq 2$ and $n = \rank \HH^d(M) \geq 3$. In particular, we find that $\pi_k(\aut(M))\tensor \QQ = 0$ unless $k$ is divisible by $d-1$. Moreover, we prove that the natural map
$$\pi_0\aut(M)\rightarrow \Aut_{alg}(\HH^*(M;\ZZ))$$
has finite kernel and image of finite index. For the manifolds of \eqref{eq:(4)}, this implies that $\pi_0\aut(M)$ is commensurable with the symplectic group $\Sp_{2g}(\ZZ)$ or the orthogonal group $O_{2g}(\ZZ)$ depending on the parity of $d$.

The next step in the proof of Theorem \ref{thm:main} is to compare $\aut(M)$ to the group of block diffeomorphisms $\tDiff(M)$, but in a relative case. Let $N = M_g \setminus \interior D$, with $M_g$ the manifold displayed in \eqref{eq:(4)}. There is a fibration
\begin{equation} \label{eq:(8)}
\tAut_{\partial N}(N)/\tDiff_{\partial N}(N) \rightarrow B\tDiff_{\partial N}(N) \rightarrow B\tAut_{\partial N}(N),
\end{equation}
where $\tAut_{\partial N}(N)\simeq \aut_{\partial N}(N)$ denotes the topological monoid of block homotopy self-equivalences of $N$ that keep $\partial N$ pointwise fixed. We note that $\tDiff_{\partial N}(N) \cong \tDiff_D(M_g)$. The surgery exact sequence is used to show that
$$\pi_k(\tAut_{\partial N}(N)/\tDiff_{\partial N}(N))\tensor \QQ = \HH^d(M_g;\QQ)\tensor \pi_{k+d}(G/O)$$
and that
$$\HH_*(\tAut_{\partial N}(N)/\tDiff_{\partial N}(N);\QQ) = \Lambda(\pi_*\tensor \QQ).$$
Stability for group cohomology of $\Sp_{2g}(\ZZ)$ and $O_{2g}(\ZZ)$ with coefficients in standard modules leads to the analogue of Theorem \ref{thm:main} with $\Diff_D(M)$ replaced with $\tDiff_D(M)$. Finally, Morlet's lemma of disjunction completes the proof of Theorem \ref{thm:main}.

One can also use \eqref{eq:(8)} to describe the rational homotopy groups of $B\tDiff_D(M_g)$. Indeed, $\pi_*(\aut_{\partial N}(N))\tensor \QQ$ vanishes in degrees $0<*<d-1$. In combination with Morlet's lemma, this gives
$$\pi_{k-1}(\Diff_D(M_g))\tensor \QQ \cong \HH^d(M_g;\QQ)\tensor \pi_{k+d}(G/O)$$
for $1<k<d-1$. More generally, one could hope to describe the rational homotopy groups of $B\tDiff_D(M_g)$ completely. This would require knowledge of
$$\pi_{k+1}(B\tAut_{\partial N}(N))\tensor \QQ \rightarrow \pi_k(\tAut_{\partial N}(N)/\tDiff_{\partial N}(N))\tensor \QQ.$$
The source is concentrated in degrees $k\equiv 0 \,\,(d-1)$ and the target in degrees $k \equiv -d \,\,(4)$. We conjecture that
$$\HH_k(B\tDiff_D(M_g);\QQ) \rightarrow \HH_k(B\tDiff_D(M_{g+1});\QQ)$$
is an isomorphism for $k<\frac{g-5}{2}$, independent of the dimension of $M_g$ provided it is $\ne 4$. We hope to return to this in a planned successor to this paper.

Finally it is a pleasure to thank Oscar Randal-Williams and S\o{}ren Galatius for helpful conversations about the content of this paper.

\section{Rational homotopy theory of homotopy self-equivalences} \label{sec:2}
Let $M$ be a $(d-1)$-connected $2d$-dimensional closed manifold where $d\geq 2$. Let $D^{2d}\subset M$ be an embedded disk and let $N$ be the manifold with boundary $N = M\setminus \interior D^{2d}$. For a pair $A \subset X$ of topological spaces, let $\aut_A(X)$ denote the topological monoid of homotopy self-equivalences of $X$ that leaves $A$ fixed pointwise, with the compact-open topology. We calculate below the rational homotopy groups of $\aut_{\partial N}(N)$, $\aut_D(M)$ and $\aut(M)$. In particular, we find that in all three cases,
$$\pi_k(\aut)\tensor \QQ = 0,\quad \mbox{for $k\not\equiv 0$ mod $(d-1)$.}$$
Furthermore, we show that the evident homomorphism to the automorphism group of the cohomology algebra
$$h\colon \pi_0(\aut) \rightarrow \Aut_{alg} \HH^*(M;\ZZ)$$
has finite kernel and its image is a subgroup of finite index.

Our proof will depend on methods from rational homotopy theory. The rational homotopy type of a simply connected space $X$ with degreewise finite dimensional rational homology is modeled algebraically by either
\begin{itemize}
\item The Sullivan-de Rham commutative differential graded algebra (cdga) $\Omega(X)$ of polynomial differential forms on $X$ \cite{Sullivan}.
\item Quillen's differential graded Lie algebra (dgl) $\lambda(X)$ of normalized chains on the simplicial Lie algebra of primitives in the completed group algebra on the Kan loop group of a 1-reduced simplicial set model for $X$ \cite{Quillen}.
\end{itemize}

\subsection{Sullivan's rational homotopy theory}
Let $\Omega_\bullet$ denote the simplicial polynomial de Rham algebra; it is the simplicial cdga with $n$-simplices
$$\Omega_n = \frac{\QQ[t_0,\ldots,t_n]\tensor \Lambda(dt_0,\ldots,dt_n)}{(t_0+\ldots + t_n-1, dt_0+\ldots +dt_n)},\quad |t_i| = 0,\,|dt_i| = 1,$$
with standard face and degeneracy maps. The simplicial deRham algebra $\Omega_\bullet$ gives rise to a (contravariant) adjunction between simplicial sets and commutative differential graded algebras
$$\adjunction{\mathsf{sSet}}{\mathsf{CDGA}_\QQ^{op},}{\Omega}{\langle - \rangle}$$
$$\Omega(X) = \Hom_{sSet}(X,\Omega_\bullet),\quad \langle B \rangle = \Hom_{cdga}(B,\Omega_\bullet).$$
The cdga $\Omega(X)$ is the Sullivan-deRham algebra of polynomial differential forms on $X$, and $\langle B \rangle$ is the spatial realization of $B$. It is a fundamental result in rational homotopy theory that the adjunction induces an equivalence between the homotopy categories of nilpotent rational spaces of finite $\QQ$-type and minimal algebras of finite type, see \cite{BG,Sullivan}.

If $T$ is a topological space, then we define $\Omega(T) = \Omega(S_\bullet(T))$, where $S_\bullet(T)$ is the singular complex of $T$. We prefer to work in the category of simplicial sets and will use the word `space' to mean a simplicial set unless otherwise indicated. A cdga $A$ which is quasi-isomorphic\footnote{$A$ is \emph{quasi-isomorphic} to $B$ if there is a zig-zag of morphisms of differential graded algebras that induce isomorphisms in cohomology $A\stackrel{\sim}{\leftarrow} \ldots \stackrel{\sim}{\rightarrow} B$.} to $\Omega(X)$ is called a \emph{cdga model} for $X$. A \emph{Sullivan algebra} is a cdga of the form $A = (\Lambda V,d)$ where
\begin{itemize}
\item $\Lambda V$ denotes the free graded commutative algebra on a graded $\QQ$-vector space $V = V^1 \oplus V^2 \oplus \ldots$.
\item $V$ admits a filtration $0 = F_0V \subseteq F_1V \subseteq \ldots \subseteq \cup_p F_pV = V$ such that
$$d(F_pV) \subseteq \Lambda(F_{p-1}V).$$
\end{itemize}
A Sullivan algebra which is also a cdga model for $X$ is called a \emph{Sullivan model} for $X$. Sullivan algebras play the role of CW-complexes in the category of cdgas; they are \emph{cofibrant} in the sense that, up to homotopy, maps from a Sullivan algebra to the target of a quasi-isomorphism lift uniquely to the source.

\begin{theorem}[Sullivan's localization theorem {\cite{BG,Sullivan}}]
Let $X$ be a nilpotent connected space of finite $\QQ$-type and let $A\stackrel{\sim}{\rightarrow} \Omega(X)$ be a Sullivan model. Then the adjoint map $X\rightarrow \langle A \rangle$ is a $\QQ$-localization.
\end{theorem}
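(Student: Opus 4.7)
The plan is to reduce Sullivan's localization theorem to two facts about the adjunction $(\Omega, \langle - \rangle)$: that $\langle A \rangle$ is a rational space of finite $\QQ$-type, and that the adjoint map $\eta_X \colon X \rightarrow \langle A \rangle$ induces an isomorphism on rational cohomology. Once both are established, a standard fact about nilpotent spaces of finite $\QQ$-type (that a map is a $\QQ$-localization iff the target is rational and the map induces an iso on $\HH^*(-;\QQ)$) finishes the proof.

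First I would address rationality of $\langle A \rangle$. Since Sullivan algebras are cofibrant in the Bousfield--Gugenheim model structure on cdgas, the lifting property against the acyclic fibration $\Omega_\bullet \twoheadrightarrow \Omega_\bullet|_{\Lambda^n_k}$ shows that $\langle A \rangle$ is a Kan complex. Then I induct on the Sullivan filtration $0 \subseteq F_1V \subseteq F_2V \subseteq \ldots$: each extension $(\Lambda F_{p-1}V,d) \hookrightarrow (\Lambda F_p V,d)$ realizes to a principal fibration of simplicial sets whose fiber is a product of $K(\QQ,n)$'s. The base case is the computation $\langle \Lambda V, 0\rangle \simeq \prod_n K((V^n)^\vee,n)$ for $V$ of finite type, which reduces to evaluating $\Hom_{cdga}(\Lambda(x_n),\Omega_\bullet)$ as the simplicial set of closed degree-$n$ forms on the simplices; a direct cocycle-cochain argument identifies this with a rational Eilenberg--MacLane space. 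Inductively $\langle A \rangle$ is then built as a tower of principal $K(\QQ,n)$-fibrations, hence is a nilpotent rational space of finite $\QQ$-type.

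Second, I would establish that the unit of adjunction $\iota_A \colon A \rightarrow \Omega\langle A \rangle$ is a quasi-isomorphism when $A$ is a Sullivan algebra of finite type. This is again proved by induction on the Sullivan filtration, where the inductive step amounts to comparing the Hirsch extension on the algebraic side with the Serre spectral sequence of the corresponding principal fibration on the topological side; the base case is the assertion that $\Lambda(x_n) \rightarrow \Omega\langle \Lambda(x_n)\rangle$ is a quasi-isomorphism, i.e.\ that $\HH^*(K(\QQ,n);\QQ)$ is the free cdga on a generator in degree $n$, which is classical. Given $\iota_A$ is a quasi-isomorphism, the triangle identity of the adjunction gives the commutative diagram
$$
A \xrightarrow{\iota_A} \Omega\langle A \rangle \xrightarrow{\Omega(\eta_X)} \Omega(X),
$$
whose composite is the chosen quasi-isomorphism $A \stackrel{\sim}{\rightarrow} \Omega(X)$. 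By two-out-of-three $\Omega(\eta_X)$ is a quasi-isomorphism, so $\eta_X^*$ is an isomorphism on $\HH^*(-;\QQ)$.

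The main obstacle is really the computational backbone: showing that $\langle \Lambda V, d \rangle$ has the expected rational homotopy type and that $\iota_A$ is a quasi-isomorphism. Both rest on understanding the simplicial polynomial de Rham complex $\Omega_\bullet$ well enough to handle the Eilenberg--MacLane base case; the key input is that $\Omega(\Delta^n)$ is acyclic, together with integration-of-forms isomorphisms identifying $\HH^*(\Omega(X))$ with $\HH^*(X;\QQ)$ for simplicial sets $X$. Once these inputs are in hand, the induction along the Sullivan filtration and the adjunction formalism combine to give the result.
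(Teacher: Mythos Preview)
The paper does not give its own proof of this theorem; it is stated with attribution to \cite{BG,Sullivan} and used as a black box. So there is nothing in the paper to compare your argument against.

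That said, your proposal is the standard Bousfield--Gugenheim argument and is correct in outline: establish that $\langle A\rangle$ is a nilpotent rational Kan complex of finite $\QQ$-type by induction up the Sullivan filtration (base case the identification of $\langle \Lambda(x_n),0\rangle$ with a rational Eilenberg--MacLane space), show the unit $\iota_A\colon A\to \Omega\langle A\rangle$ is a quasi-isomorphism by the same induction, and then use the triangle identity and two-out-of-three to see that the adjoint $X\to\langle A\rangle$ is a rational cohomology isomorphism into a rational space, hence a $\QQ$-localization. The only point worth flagging is that the inductive step for $\iota_A$ requires the comparison of the relative Sullivan extension with the Serre spectral sequence of the induced principal fibration on spatial realizations, which in turn uses that $\langle - \rangle$ takes cofibrations of Sullivan algebras to Kan fibrations; you allude to this but it is where most of the actual work lies. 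This is precisely the content of \cite[\S8]{BG}.
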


\subsection{Quillen's rational homotopy theory}
For a simplicial set $X$ with $X_0 = X_1 = *$, Quillen \cite{Quillen} considers the differential graded Lie algebra
$$\lambda(X) = N_*\mathscr{P}\widehat{\QQ}[G_\bullet X]$$
of normalized chains on the simplicial Lie algebra of primitives in the completed group algebra on the Kan loop group of $X$. The functor $\lambda\colon \mathsf{sSet}_1\rightarrow DGL$ induces an equivalence between the homotopy category of $1$-connected rational spaces and dgls. The homology of $\lambda(X)$ is isomorphic to the graded Lie algebra formed by the rational homotopy groups $\pi_*(\Omega X)\tensor \QQ$ with Samelson products:
$$\HH_*(\lambda(X)) \cong \mathscr{P}\HH_*(\Omega X;\QQ) \cong \pi_*(\Omega X)\tensor \QQ.$$
A dgl $L$ which is quasi-isomorphic to $\lambda(X)$ in the category of differential graded Lie algebras is called a \emph{dgl model} for $X$. A \emph{Quillen model} for $X$ is a dgl model of the form $L = (\LL(W),d)$ where $\LL(W)$ denotes the free graded Lie algebra on a graded vector space $W = W_1\oplus W_2 \oplus \ldots$. Quillen models are cofibrant in the category of dg Lie algebras.

Given a dgl $L$ of finite type, let $C_*(L)$ be the differential graded cocommutative coalgebra
$$C_*(L) = (\Lambda(L[1]),d = d_0 + d_1),\quad L[1]_k = L_{k-1},$$
where $d_0$ is induced from the differential in $L$ and
$$d_1(x_1\wedge \ldots \wedge x_k) = \sum_{i<j} \epsilon_{ij} [x_i,x_j] \wedge x_1 \wedge \ldots \widehat{x_i} \ldots \widehat{x_j} \ldots \wedge x_k.$$
Here $\epsilon_{ij}$ is the standard sign from permuting $x_i$ and $x_j$ to the front. The \emph{Chevalley-Eilenberg construction} $C^*(L)$ is the dual cdga,
$$C^*(L) = \Hom_\QQ(C_*(L),\QQ) \cong \Lambda(L[1]^\vee)$$
with $L[1]^\vee = \Hom_\QQ(L[1],\QQ)$.

In 1977 Baues and Lemaire \cite{BL} conjectured that the Chevalley-Eilenberg construction would provide a bridge from Quillen's to Sullivan's theory. This was proved by Majewski in 2000 \cite{Majewski}. The precise statement is the following.
\begin{theorem}[Baues-Lemaire conjecture \cite{BL,Majewski}]
Let $X$ be a simply connected space of finite $\QQ$-type. If $L$ is a finite type dgl model for $X$ then $C^*(L)$ is a Sullivan model for $X$.
\end{theorem}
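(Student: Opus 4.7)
The proof splits into two pieces: verify that $C^*(L)$ is a Sullivan algebra, then establish a cdga quasi-isomorphism (zig-zag) $C^*(L)\simeq \Omega(X)$.

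For the Sullivan structure, choose a homogeneous basis $\{x_\alpha\}$ of $L$ well-ordered so that degrees are non-decreasing in $\alpha$; this is possible since simple connectedness of $X$ forces $L$ to live in strictly positive degrees with finite-dimensional components. The duals $x_\alpha^\vee$ generate $C^*(L) = \Lambda(L[1]^\vee)$ freely, and the differential $d_0^* + d_1^*$ sends each $x_\alpha^\vee$ to a polynomial in $\{x_\beta^\vee : \deg x_\beta < \deg x_\alpha\}$: the piece $d_0^*$ is linear in lower-degree generators, and the piece $d_1^*$ produces sums $x_\beta^\vee x_\gamma^\vee$ with $\deg x_\beta + \deg x_\gamma = \deg x_\alpha$, where positivity of the grading forces $\deg x_\beta, \deg x_\gamma < \deg x_\alpha$. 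Letting $F_pV$ be spanned by the first $p$ generators of the enumeration yields $d(F_pV)\subset \Lambda(F_{p-1}V)$, the required Sullivan filtration.

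The bulk of the work is the quasi-isomorphism. First reduce to $L = \lambda(X)$: any two finite-type dgl models are linked by a zig-zag of dgl quasi-isomorphisms, and the functor $C^*$ preserves such quasi-isomorphisms. The latter I check by filtering $C^*(L)$ by word length, $F^p = \Lambda^{\geq p}(L[1]^\vee)$, on which $d_1^*$ lowers filtration by one; the associated spectral sequence converges under finite type, with $E_1\cong \Lambda(\HH_*(L)[1]^\vee)$, so any dgl quasi-isomorphism induces a cdga quasi-isomorphism. It remains to produce a natural cdga quasi-isomorphism between the functors $C^*(\lambda(-))$ and $\Omega(-)$. One route is to pass through PL-de Rham forms on the Kan loop group $G_\bullet X$, identifying cochains on primitives of the completed group algebra with forms on $X$ via transgression along the path-loop fibration.

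The main obstacle is this last step. That the underlying cohomologies agree is the classical Cartan-Chevalley / Milnor-Moore computation: the Lie algebra cohomology of $\pi_*(\Omega X)\tensor \QQ$ with trivial coefficients is $\HH^*(X;\QQ)$ for simply connected $X$ of finite $\QQ$-type. Upgrading this to a genuine cdga quasi-isomorphism requires matching the multiplicative (and higher $A_\infty$) structures on the two specific models $C^*(\lambda(X))$ and $\Omega(X)$, which is nontrivial because two cdgas with isomorphic cohomology need not be quasi-isomorphic. Once a natural cdga map $\Phi$ is in hand, naturality reduces the verification of quasi-isomorphism to generating spaces such as spheres, where both sides are elementary: $C^*(\lambda(S^n))$ is the Chevalley-Eilenberg complex of the free graded Lie algebra on one generator, while $\Omega(S^n)$ is formal with cohomology $\HH^*(S^n;\QQ)$. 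Thus the conceptual heart of the argument is the construction of $\Phi$, not its verification; this is precisely the content of Majewski's proof of the Baues-Lemaire conjecture.
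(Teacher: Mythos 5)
This theorem is not proved in the paper: it is quoted as Majewski's resolution \cite{Majewski} of the Baues--Lemaire conjecture, so there is no in-paper argument to compare against. Judged on its own terms, your proposal is a reasonable roadmap but not a proof, because the step you yourself identify as "the conceptual heart" --- the construction of a natural zig-zag of cdga quasi-isomorphisms between $C^*(\lambda(X))$ and $\Omega(X)$ --- is only gestured at ("pass through PL-de Rham forms on the Kan loop group \ldots via transgression") and then explicitly deferred to Majewski. Knowing that $\HH^*(C^*(\pi_*(\Omega X)\tensor\QQ))\cong \HH^*(X;\QQ)$ abstractly does not produce the required map, and as you note, abstractly isomorphic cohomology does not imply quasi-isomorphism of cdgas; Majewski's memoir is devoted precisely to building and comparing the intermediate functors. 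So the central content of the statement is assumed, not established.

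There is also a concrete error in your Sullivan-filtration argument. The linear part $d_0^*$ of the differential on $C^*(L)=\Lambda(L[1]^\vee)$ is dual to the differential of $L$, which lowers homological degree; hence $d_0^*$ sends a generator of cohomological degree $k$ to a combination of generators of degree $k+1$, i.e.\ to \emph{higher}-degree generators, not lower ones as you claim. Consequently a filtration by non-decreasing generator degree satisfies $d_1^*(F_pV)\subseteq\Lambda(F_{p-1}V)$ but fails for $d_0^*$ whenever $L$ has nonzero differential (your argument is only valid for coformal-type models with $d_L=0$). The statement that $C^*(L)$ is a Sullivan algebra for a connected finite-type chain Lie algebra $L$ is true, but it needs a different argument --- e.g.\ splitting $(L[1]^\vee,d_0^*)$ into its homology plus an acyclic complement and changing generators to exhibit $C^*(L)$ as (minimal)\,$\tensor$\,(contractible), or citing the standard result in F\'elix--Halperin--Thomas or Tanr\'e. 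Finally, your reduction to $L=\lambda(X)$ passes through dgls that are not of finite type (where $C^*$ and your word-length spectral sequence are not obviously well behaved); this is repairable but should be addressed.
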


\subsection{Rational homotopy theory of mapping spaces}
There are several approaches to the rational homotopy theory of mapping spaces, see for instance \cite{B2,BS,BFM,Haefliger}. We will describe the model of \cite{B2}, which uses the Maurer-Cartan simplicial set associated to a dg Lie algebra.

Let $\gl$ be a dgl whose underlying chain complex is not necessarily bounded. The set of \emph{Maurer-Cartan elements} $\MC(\gl)$ consists of all elements $\tau\in \gl$ of degree $-1$ that satisfy the equation
\begin{equation} \label{mc}
d(\tau) + \frac{1}{2}[\tau,\tau] = 0.
\end{equation}
Equivalently, $\tau$ is a Maurer-Cartan element if the map $d^\tau\colon \gl\rightarrow \gl$ defined by
$$d^\tau(x) = dx + [\tau,x]$$
satisfies $(d^\tau)^2 = 0$. In this case, $(\gl,d^\tau)$ is again a dgl.

If $A$ is a cdga and $\gl$ is a dgl, the tensor product chain complex $A\tensor \gl$ becomes a dgl with
Lie bracket
$$[x\tensor \alpha,y\tensor \beta] = (-1)^{|\alpha||y|}xy\tensor [\alpha,\beta],$$
and grading
$$(A\tensor \gl)_k = \bigoplus_n A^n\tensor \gl_{n+k}.$$

Following \cite{G}, we may form the simplicial dgl $\Omega_\bullet \tensor \gl$ and the simplicial set
$$\MC_\bullet(\gl) = \MC(\Omega_\bullet \tensor \gl).$$
Since $\tau \mapsto d\tau + \frac{1}{2}[\tau,\tau]$ is not a linear operator, $\MC_\bullet(\gl)$ is not a simplicial group, nevertheless it is fibrant, i.e. a Kan complex, by \cite{G}.

The path components of $\MC_\bullet(\gl)$ is the set of Maurer-Cartan elements of $\gl$ modulo homotopy equivalence. Let us make this explicit. A $1$-simplex $\gamma\in \mathsf{MC}_1(\gl) = \mathsf{MC}(\Lambda(t,dt)\tensor \gl)$ may be written as
$$\gamma = \alpha(t) + \beta(t) dt$$
where $\alpha(t) = \sum_i \alpha_i t^i$ and $\beta(t) =\sum_i \beta_i t^i$ for elements $\alpha_i \in \gl_{-1}$ and $\beta_i\in \gl_0$, and the Maurer-Cartan equation $d\gamma + \frac{1}{2}[\gamma,\gamma] = 0$ is equivalent to
\begin{align*}
d \alpha(t) + \frac{1}{2}[\alpha(t),\alpha(t)] & = 0 \\
d\beta(t) + [\alpha(t),\beta(t)]  & = \dot{\alpha}(t)
\end{align*}
where $d\alpha(t) = \sum_i d(\alpha_i)t^i$ and $\dot{\alpha}(t) = \sum_i i\alpha_i t^{i-1}$. Two Maurer-Cartan elements $\tau,\tau'\in \mathsf{MC}(\gl)$ are homotopy equivalent if there is a $\gamma$ as above such that $\gamma|_{t=0} = \tau$ and $\gamma|_{t=1} = \tau'$, or equivalently, $\alpha(0) = \tau$ and $\alpha(1) = \tau'$.

For a fixed basepoint $\tau\in\MC_\bullet(\gl)$, i.e., a Maurer-Cartan element of $\gl$, the higher homotopy groups $\pi_{n+1}(\MC_\bullet(\gl),\tau)$ can be calculated as follows:

\begin{proposition}
Let $\tau$ be a Maurer-Cartan element of $\gl$. For every $n\geq 0$ the map
\begin{equation} \label{eq:homotopy}
B_n^\tau\colon \HH_n(\gl,d^\tau) \stackrel{\cong}{\rightarrow} \pi_{n+1}(\MC_\bullet(\gl),\tau),
\end{equation}
$$[\alpha] \mapsto [ 1\tensor \tau + dt_0\wedge \ldots \wedge dt_n\tensor \alpha]$$
is an isomorphism of abelian groups for $n\geq 1$, and for $n = 0$ it identifies the group $\pi_1(\MC_\bullet(\gl),\tau)$ with the exponential of the Lie algebra $\HH_0(\gl,d^\tau)$.

\end{proposition}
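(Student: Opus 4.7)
The plan is to check the well-definedness of $B_n^\tau$, reduce to the case $\tau = 0$ by a translation trick, and then identify $\pi_{n+1}(\MC_\bullet(\gl),0)$ with $\HH_n(\gl)$ using the relative de Rham cohomology of the simplex.

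For well-definedness, set $\omega = dt_0\wedge\ldots\wedge dt_n\in\Omega_{n+1}$; since $|\omega|=n+1$ and $|\alpha|=n$, the element $\omega\tensor\alpha$ has total degree $-1$ in $\Omega_{n+1}\tensor\gl$. The Maurer-Cartan equation for $1\tensor\tau + \omega\tensor\alpha$ splits by form-degree into three pieces: $1\tensor(d\tau+\tfrac12[\tau,\tau])$ vanishes because $\tau\in\MC(\gl)$; the quadratic piece $\tfrac12[\omega\tensor\alpha,\omega\tensor\alpha]$ vanishes because $\omega\wedge\omega = 0$, as $2(n+1)$ exceeds the top form-degree of $\Omega_{n+1}$; and the cross piece equals $\omega\tensor d^\tau\alpha$, which vanishes precisely because $\alpha$ is a $d^\tau$-cycle. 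Moreover $B_n^\tau(\alpha)$ restricts to $1\tensor\tau$ on every face of $\Delta^{n+1}$: for $i\le n$ the face map $\partial_i$ kills a factor $dt_i$ of $\omega$, while on $\partial_{n+1}\Delta^{n+1}$ the form $dt_0\wedge\ldots\wedge dt_n$ vanishes in $\Omega_n$ by the relation $dt_0+\ldots+dt_n = 0$. Homotopy invariance of $[B_n^\tau(\alpha)]$ under $\alpha\mapsto\alpha + d^\tau\beta$ is then established by a concrete null-homotopy built from $\beta$.

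To reduce to $\tau = 0$, observe that the twisted dgl $\gl^\tau = (\gl,d^\tau,[-,-])$ has $0$ as an MC element, and that $\sigma\mapsto\sigma - 1\tensor\tau$ defines a simplicial isomorphism $\MC_\bullet(\gl)\cong\MC_\bullet(\gl^\tau)$ sending $\tau$ to $0$ and $B_n^\tau$ to $B_n^0$. For the bijectivity of $B_n^0$, the key input is that the ideal $I\subset\Omega_{n+1}$ of polynomial forms vanishing on $\partial\Delta^{n+1}$ has cohomology concentrated in degree $n+1$, one-dimensional and spanned by $[\omega]$. A Hodge-type contracting homotopy on $I$ lets one gauge any $\sigma\in\MC(I\tensor\gl)$ into the form $\omega\tensor\alpha$, at which point the MC equation collapses to $d_\gl\alpha = 0$; this produces the inverse $[\sigma]\mapsto[\alpha]$ and gives surjectivity. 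An analogous analysis of $(n+2)$-simplices witnessing $B_n^0(\alpha)\simeq 0$ yields injectivity. For $n=0$ the same argument identifies $\pi_1$ with $\HH_0(\gl,d^\tau)$ equipped with the Baker-Campbell-Hausdorff product on $1$-simplices.

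The main obstacle is this gauging step, since it must be performed iteratively by form-degree and the nonlinear term $\tfrac12[\sigma,\sigma]$ can obstruct convergence unless some completeness or nilpotency condition is in force. In the applications of interest $\gl$ will be pronilpotent, so the iteration converges and the bijectivity can be extracted from Getzler's analysis of the Maurer-Cartan simplicial set \cite{G} (see also the related discussion in \cite{B2}).
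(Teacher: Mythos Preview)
The paper does not actually prove this proposition: immediately after stating it, the authors write ``We refer the reader to \cite{B2} for proofs of these facts.'' So there is no in-paper argument to compare your proposal against.

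That said, your sketch is a correct outline of how the result is established in the references the paper cites. The well-definedness check is right (in particular your observation that $\omega = dt_0\wedge\cdots\wedge dt_n$ vanishes under every face map of $\Delta^{n+1}$), the translation $\sigma\mapsto \sigma - 1\tensor\tau$ reducing to $\tau=0$ is standard, and the identification of $\pi_{n+1}(\MC_\bullet(\gl),0)$ with $\HH_n(\gl)$ via the acyclicity of the ideal of forms vanishing on $\partial\Delta^{n+1}$ is essentially Getzler's argument in \cite{G}. You are also right to flag the gauging step as the delicate point: without a nilpotency or completeness hypothesis on $\gl$ the iterative normalization need not converge, and indeed the proposition as stated in the paper is really being asserted in the context where $\gl$ arises as $A\tensor L$ with $L$ positively graded (hence the relevant dgls are suitably nilpotent). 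Your closing remark that the applications only require the pronilpotent case, and that this is covered by \cite{G} and \cite{B2}, matches exactly what the paper relies on.
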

We refer the reader to \cite{B2} for proofs of these facts.

Notice that if $L$ is a dgl that is concentrated in positive degrees, then $\MC_\bullet(L)$ is simply connected. In a sense, the construction $L\mapsto \MC_\bullet(L)$ is inverse to Quillen's functor $Y\mapsto \lambda(Y)$. Indeed, suppose that $L$ is a finite type dgl model for a simply connected space $Y$ of finite $\QQ$-type. For a bounded cdga $A$, restriction to generators gives a map
$$\Hom_{cdga}(C^*(L),A)\rightarrow \Hom_\QQ(L[1]^\vee,A)_0\cong (A\tensor L)_{-1}.$$
Since the underlying algebra of $C^*(L)$ is free this map is injective. The image can be identified with $\MC(A\tensor L)$. In particular, if we take $A = \Omega_\bullet$ this yields an isomorphism of simplicial sets
$$\langle C^*(L) \rangle \stackrel{\cong}{\rightarrow} \MC_\bullet(L).$$
By the affirmed Baues-Lemaire conjecture, $C^*(L)$ is a Sullivan model for $Y$, i.e., there is a quasi-isomorphism of cdgas $\phi\colon C^*(L)\stackrel{\sim}{\rightarrow} \Omega(Y)$. By Sullivan's localization theorem, the adjoint $Y \rightarrow \langle C^*(L) \rangle \cong \MC_\bullet(L)$ to $\phi$ is a $\QQ$-localization. Thus, if $L$ is a finite type dgl model for $\lambda(Y)$, then $\MC_\bullet(L)$ is a $\QQ$-localization of $Y$.

Next, let $X$ be a finite connected space. For a fixed map $f\colon X\rightarrow Y$ the $\QQ$-localization map $r\colon Y\rightarrow \MC_\bullet(L)$ induces a $\QQ$-localization
$$r_*\colon\Map(X,Y)_f\rightarrow \Map(X,\MC_\bullet(L))_{rf},$$
see e.g., \cite[Theorem II.3.11]{HMR}. There is a natural homotopy equivalence
\begin{equation} \label{eq:heq}
\varphi\colon\MC_\bullet(\Omega(X)\tensor L) \rightarrow \Map_{sSet}(X,\MC_\bullet(L))
\end{equation}
which is defined as follows: Firstly, there is a natural isomorphism
$$\mu\colon \MC(\Omega(X)\tensor L) \rightarrow \Hom_{sSet}(X,\MC_\bullet(L))$$
given by $\mu(\tau)(x) = x^*(\tau)$, where $x^*\colon \Omega(X)\tensor L\rightarrow \Omega_n\tensor L$ is the dgl morphism induced by a simplex $x\in X_n$. Secondly, on $k$-simplices $\varphi$ defined as the composite
$$\MC(\Omega(\Delta[k])\tensor \Omega(X)\tensor L) \rightarrow \MC(\Omega(\Delta[k]\times X)\tensor L) \stackrel{\mu}{\rightarrow} \Map_{sSet}(X,\MC_\bullet(L))_k$$
where $\pi$ is induced by the natural morphism $\Omega(X)\tensor \Omega(Y)\rightarrow \Omega(X\times Y)$. The proof that $\varphi$ is a homotopy equivalence can be found in \cite{B2}. Furthermore, the functor $\MC_\bullet(-\tensor L)\colon \mathsf{CDGA}_\QQ \rightarrow \mathsf{sSet}$ takes quasi-isomorphisms to homotopy equivalences \cite{B2}. Therefore, the mapping space $\Map(X,Y_\QQ)$ is homotopy equivalent to $\MC_\bullet(A\tensor L)$ where $A$ is any cdga model for $X$ and $L$ is any dgl model for $Y$. The homotopy groups may be calculated by \eqref{eq:homotopy}.

\begin{theorem} \label{thm:map}
Let $X$ be a connected finite space and let $Y$ be a simply connected space of finite $\QQ$-type. If $A$ is a cdga model for $X$ and $L$ is a finite type dgl model for $Y$ then there is a homotopy equivalence
$$\MC_\bullet(A\tensor L) \stackrel{\sim}{\rightarrow} \Map(X,Y_\QQ).$$
In particular, there is a bijection
\begin{equation} \label{bijection}
[X,Y_\QQ] \cong \pi_0\MC_\bullet(A\tensor L).
\end{equation}
Let $f\colon X\rightarrow Y$ be a fixed map, and let $\tau\in \MC(A\tensor L)$ be a Maurer-Cartan element whose component corresponds to the homotopy class of $rf\colon X\rightarrow Y_\QQ$ under the bijection \eqref{bijection}. Then for every $n\geq 0$ there is an isomorphism
$$\HH_n(A\tensor L, d^\tau)\stackrel{\cong}{\rightarrow} \pi_{n+1}(\Map(X,Y),f)\tensor \QQ.$$
For $n\geq 1$ this is an isomorphism of rational vector spaces and for $n=0$ it identifies the Malcev completion of the fundamental group $\pi_1(\Map(X,Y),f)$ with the exponential of the Lie algebra $\HH_0(A\tensor L,d^\tau)$.
\end{theorem}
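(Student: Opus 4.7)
The plan is to assemble the pieces already laid out in the preceding subsections into a chain of natural homotopy equivalences, and then read off the homotopy groups using the proposition \eqref{eq:homotopy}.

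First I would identify $\Map(X, Y_\QQ)$ with $\MC_\bullet(\Omega(X)\tensor L)$. By the affirmed Baues--Lemaire conjecture and Sullivan's localization theorem (applied in the paragraph preceding \eqref{eq:heq}), the natural map $r\colon Y \to \MC_\bullet(L)$ is a $\QQ$-localization of $Y$. Since $X$ is a finite connected space, postcomposition with $r$ gives componentwise a $\QQ$-localization of mapping spaces, $r_*\colon \Map(X,Y)_f \stackrel{\sim_\QQ}{\longrightarrow} \Map(X,\MC_\bullet(L))_{rf}$ (Hilton--Mislin--Roitberg). Composing with the inverse of the homotopy equivalence $\varphi$ of \eqref{eq:heq} identifies the target with $\MC_\bullet(\Omega(X)\tensor L)$.

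Next I would replace $\Omega(X)$ with $A$. Since $A$ is a cdga model for $X$, there is a zig-zag of quasi-isomorphisms connecting $A$ and $\Omega(X)$; tensoring with $L$ (which is flat over $\QQ$) produces a zig-zag of quasi-isomorphisms of dgls between $A\tensor L$ and $\Omega(X)\tensor L$. By the already-cited result that $\MC_\bullet(-\tensor L)$ carries quasi-isomorphisms to homotopy equivalences \cite{B2}, this yields $\MC_\bullet(A\tensor L) \simeq \MC_\bullet(\Omega(X)\tensor L)$. Concatenating with the previous step gives the first claim. Taking $\pi_0$ then yields the bijection \eqref{bijection}.

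For the homotopy groups at a chosen basepoint, let $\tau\in\MC(A\tensor L)$ be a Maurer--Cartan element in the component corresponding to $rf$. The dgl $A\tensor L$ (with its $d^\tau$-twisted differential) has $\MC_\bullet$ pointed at $\tau$, and the equivalences above are natural enough to send this basepoint to $rf\in \Map(X,Y_\QQ)$. Applying the isomorphism $B_n^\tau$ of \eqref{eq:homotopy} then identifies $\HH_n(A\tensor L, d^\tau)$ with $\pi_{n+1}(\MC_\bullet(A\tensor L),\tau) \cong \pi_{n+1}(\Map(X,Y_\QQ),rf)$, which for $n\geq 1$ is $\pi_{n+1}(\Map(X,Y),f)\tensor \QQ$ by localization, and for $n=0$ is the Malcev completion of $\pi_1(\Map(X,Y),f)$.

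The main subtlety I expect is the careful tracking of basepoints through all the equivalences: the Maurer--Cartan element $\tau$ needs to correspond under the natural isomorphism $\mu$ (and after the flip from $A\tensor L$ to $\Omega(X)\tensor L$ via the quasi-isomorphism) to precisely the homotopy class of the localized map $rf\colon X \to \MC_\bullet(L)$, so that both sides of \eqref{eq:homotopy} refer to compatible basepoints. Once this naturality is in place, all the hard work has already been done in \cite{B2,BG,Quillen,Sullivan,Majewski}, and the theorem follows by stringing the equivalences together.
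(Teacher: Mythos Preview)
Your proposal is correct and follows essentially the same route that the paper lays out in the paragraphs immediately preceding the theorem: identify $Y_\QQ$ with $\MC_\bullet(L)$ via the affirmed Baues--Lemaire conjecture and Sullivan's localization theorem, use \cite{HMR} to localize the mapping space, apply the equivalence $\varphi$ of \eqref{eq:heq}, invoke invariance of $\MC_\bullet(-\tensor L)$ under quasi-isomorphisms to pass from $\Omega(X)$ to $A$, and finally read off homotopy groups via \eqref{eq:homotopy}. The paper itself defers the full details to \cite{B2}, so your assembly of the pieces is exactly what is intended.
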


This theorem is proved in \cite{B2}. See also \cite{BFM}. The constant map $X\rightarrow Y$ corresponds to the trivial Maurer-Cartan element $\tau = 0$. The finiteness assumption on $X$ can be relaxed if one considers coalgebra models instead of cdga models. Moreover, one can relax $L$ to a be an $L_\infty$-algebra rather than a strict dgl.

\subsection{Formal and coformal spaces and Koszul algebras}
Theorem \ref{thm:map} raises the question of how to find tractable algebraic models for the spaces involved. The cohomology of the Sullivan-de Rham algebra $\Omega(X)$ is isomorphic to the singular cohomology algebra $\HH^*(X;\QQ)$. The simplest possible cdga with this cohomology is the cohomology itself, viewed as a cdga with zero differential. If $\Omega(X)$ is indeed quasi-isomorphic to $\HH^*(X;\QQ)$, then $X$ is called \emph{formal}. Far from all spaces are formal, Massey operations in the cohomology being a first obstruction, but sometimes formality is forced by geometric constraints. A celebrated result due to Deligne, Griffiths, Morgan and Sullivan says that every simply connected compact K\"ahler manifold is formal \cite{DGMS}. Formality can also be deduced from connectivity and dimension constraints. As shown in \cite{Mi}, every $(d-1)$-connected manifold of dimension at most $4d-2$ is formal, for $d\geq 2$.

There is a parallel story for the Quillen model. The homology of Quillen's dgl $\lambda(X)$ is isomorphic to the graded Lie algebra formed by the rational homotopy groups $\pi_*(\Omega X)\tensor \QQ$ with Samelson products. A space $X$ is called \emph{coformal} if the homotopy Lie algebra $\pi_*(\Omega X)\tensor \QQ$, with zero differential, is a dgl model for $X$.

If we want to use Theorem \ref{thm:map} to model a mapping space $\Map(X,Y)$, the simplest case imaginable is when $X$ is formal and $Y$ is coformal; then one may choose $A = \HH^*(X;\QQ)$ and $L = \pi_*(\Omega Y)\tensor\QQ$ as models. We are particularly interested in self-maps of $X$, and so one is naturally led to ask when $X$ is simultaneously formal and coformal. The answer is given by Theorem \ref{thm:koszul space} below. The characterization involves the notion of Koszul algebras, so we first need to explain what this means.

Koszul algebras were introduced by Priddy \cite{Priddy}. Let $A$ be a graded commutative $\QQ$-algebra $A = A^0 \oplus A^1 \oplus \ldots$ which is connected in the sense that $A^0 \cong \QQ$. Let $V_A = A^+/A^+ \cdot A^+$ denote the space of indecomposables and let $R_A \subseteq \Lambda^2V_A$ be the kernel of the multiplication map $\Lambda^2 V_A \rightarrow A$ induced by some choice of splitting of the projection $A^+ \rightarrow V_A$. The algebra $A$ is called \emph{quadratic} if the induced surjective morphism of graded algebras $\Lambda V_A/(R_A) \rightarrow A$ is an isomorphism. In other words, $A$ is quadratic if it is generated by some elements $x_i$ modulo certain quadratic relations
\begin{equation} \label{eq:relations}
\sum_{i,j} c_{ij} x_ix_j = 0,\quad c_{ij}\in\QQ.
\end{equation}
If $A$ is quadratic then there is an additional grading on $A$ given by the wordlength in the generators $x_i$\footnote{The generators $x_i$ need not be of the same cohomological degree.} This induces an additional grading also on the Ext-groups $\Ext_A^*(\QQ,\QQ)$. A graded commutative connected algebra $A$ is a \emph{Koszul algebra} if it is quadratic and if $\Ext_A^{s,t}(\QQ,\QQ) = 0$ for $s \ne t$.

Let $L = L_1 \oplus L_2 \oplus \ldots$ be a graded Lie algebra over $\QQ$, let $V_L = L/[L,L]$ denote the space of indecomposables and let $R_L\subseteq \LL^2(V_L)$ be the kernel of the multiplication map $\LL^2(V_L) \rightarrow L$ induced by some choice of splitting of the projection  $L\rightarrow V_L$. There is an induced surjective morphism of graded Lie algebras $\LL(V_L)/(R_L) \rightarrow L$. If it is an isomorphism then $L$ is called \emph{quadratic}. In other words, $L$ is quadratic if it is generated by some elements $\alpha_i$ modulo certain quadratic relations
\begin{equation} \label{eq:lie_relations}
\sum_{i,j} \lambda_{ij}[\alpha_i,\alpha_j] = 0,\quad \lambda_{ij}\in \QQ.
\end{equation}
There is an additional grading on a quadratic Lie algebra given by bracket length in the generators $\alpha_i$. This in turn induces an additional grading on the cohomology $\Ext_{UL}^*(\QQ,\QQ)$. A graded Lie algebra $L$ is called \emph{Koszul} if it is quadratic and if $\Ext_{UL}^{s,t}(\QQ,\QQ) = 0$ for $s\ne t$.

If $A$ is a quadratic commutative algebra and $L$ is a quadratic Lie algebra then we say that \emph{$A$ is Koszul dual to $L$} if there is a non-degenerate pairing of degree $1$
$$\langle \, , \, \rangle \colon V_A\tensor V_L \rightarrow \QQ$$
such that $R_L^\perp = R_A$ under the induced pairing
$$\langle \, ,\, \rangle \colon \Lambda^2(V_A)\tensor\LL^2(V_L)  \rightarrow \QQ,$$
$$\langle xy,[\alpha,\beta]\rangle = (-1)^{|y||\alpha|+|x|+|\alpha|} \langle x,\alpha\rangle \langle y,\beta\rangle - (-1)^{|\alpha||\beta|+|y||\beta| + |x| + |\beta|} \langle x,\beta\rangle \langle y,\alpha\rangle.$$
In other words, $A$ and $L$ are Koszul dual if they have dual generators and orthogonal relations. We may without loss of generality assume that the coefficients in \eqref{eq:relations} and \eqref{eq:lie_relations} are symmetric in the sense that $c_{ij} = (-1)^{|x_i||x_j|}c_{ji}$ and $\lambda_{ij} = -(-1)^{|\alpha_i||\alpha_j|}\lambda_{ji}$. In effect, to say that $A$ and $L$ have orthogonal relations means that a relation \eqref{eq:lie_relations} holds in $L$ if and only if
$$\sum_{i,j} (-1)^{|x_j||\alpha_i|} c_{ij}\lambda_{ij} = 0$$
whenever the coefficients $c_{ij}$ represent a relation among the generators $x_i$ as in \eqref{eq:relations}.

Every quadratic algebra $A$ has a dual, often denoted $A^{!Lie}$: simply define $V_L := \Hom_\QQ(V_A,\QQ)[-1]$, with the standard evaluation pairing, define $R_L:=R_A^\perp$ and let $L=\LL(V_L)/(R_L)$. Clearly, the dual is unique up to isomorphism.

There is a natural pairing between the cohomology and homotopy groups of a space $X$ given by
\begin{equation} \label{eq:pair}
\langle\,,\,\rangle\colon \HH^n(X)\tensor \pi_n(X) \rightarrow \ZZ,\quad \langle x ,\, \alpha \rangle = \langle \alpha^*(x), \, [S^n]\rangle.
\end{equation}
If $x$ is decomposable with respect to the cup product or if $\alpha$ is decomposable with respect to the Whitehead product then $\langle x,\, \alpha \rangle = 0$. Therefore, \eqref{eq:pair} induces a pairing (of degree $+1$) between indecomposables
\begin{equation} \label{eq:pairing}
\langle\,,\,\rangle\colon V_A\tensor V_L \rightarrow \QQ,
\end{equation}
where $A = \HH^*(X;\QQ)$ and $L = \pi_*(\Omega X)\tensor \QQ$. Sometimes the pairing is non-degenerate, sometimes it is not. If it is non-degenerate, then it sometimes exhibits $\HH^*(X;\QQ)$ and $\pi_*(\Omega X)\tensor\QQ$ as Koszul dual, but often it does not. However, we have the following theorem which characterizes spaces that are both formal and coformal.

\begin{theorem}[{\cite{B1}}] \label{thm:koszul space}
Let $X$ be a simply connected space of finite $\QQ$-type. The following are equivalent:
\begin{enumerate}
\item $X$ is both formal and coformal.
\item $X$ is formal and $\HH^*(X;\QQ)$ is a Koszul algebra.
\item $X$ is coformal and $\pi_*(\Omega X)\tensor \QQ$ is a Koszul Lie algebra.
\end{enumerate}
Furthermore, in this situation the pairing between indecomposables in cohomology and homotopy \eqref{eq:pairing} is non-degenerate and exhibits $\HH^*(X;\QQ)$ and $\pi_*(\Omega X)\tensor \QQ$ as Koszul dual to one another.
\end{theorem}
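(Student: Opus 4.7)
My plan is to combine the Baues--Lemaire/Majewski theorem with classical Koszul duality between quadratic graded commutative algebras and quadratic graded Lie algebras. Throughout, set $A = \HH^*(X;\QQ)$ and $L = \pi_*(\Omega X) \tensor \QQ$. The algebraic input I will use (after Priddy, adapted to the graded-commutative setting) is the following: for a quadratic commutative algebra $A'$ with Koszul dual Lie algebra $L' = (A')^{!Lie}$, the conditions ``$A'$ is Koszul'', ``$L'$ is Koszul'', and ``the Chevalley--Eilenberg comparison map $C^*(L') \to A'$ induced by the duality pairing is a quasi-isomorphism'' are equivalent. This symmetric duality $(A')^{!Lie\,!} \cong A'$ is built from a non-degenerate pairing $V_{A'} \tensor V_{L'} \to \QQ$ with orthogonal relations, and this pairing will be the algebraic source of the non-degeneracy claim in the last sentence of the theorem.

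For $(1) \Rightarrow (2), (3)$, assume $X$ is both formal and coformal, so that $A$ (resp.\ $L$) with zero differential is a cdga (resp.\ dgl) model for $X$. By Baues--Lemaire $C^*(L)$ is another cdga model, hence $A \cong \HH^*(C^*(L)) \cong \Ext^*_{UL}(\QQ,\QQ)$. The word-length filtration on $C^*(L)$ induces a bigrading on this Ext algebra. Since $A$ is generated as an algebra by the image of $L[1]^\vee = \HH^1(C^*(L))$, i.e.\ by classes of CE word-length one, all of $A$ must sit on the diagonal $s = t$ of the Ext bigrading. This concentration forces $L$ to be quadratic, since higher bracket-length relations in $L$ would necessarily produce off-diagonal Ext classes, and simultaneously says $L$ is Koszul. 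The algebraic input then yields a quasi-isomorphism $C^*(L) \xrightarrow{\sim} L^!$ identifying $A$ with the Koszul dual algebra $L^!$, proving both (2) and (3).

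For the reverse directions, if $X$ is formal with $A$ Koszul, set $L := A^{!Lie}$. The Koszul quasi-isomorphism $C^*(L) \xrightarrow{\sim} A \simeq \Omega(X)$ exhibits $L$ as a dgl model for $X$; since a simply connected space of finite $\QQ$-type determines its dgl model up to quasi-isomorphism, necessarily $L \cong \pi_*(\Omega X) \tensor \QQ$ with zero differential, so $X$ is coformal. The implication $(3) \Rightarrow (1)$ is symmetric: if $L$ is Koszul, then Baues--Lemaire together with $C^*(L) \xrightarrow{\sim} L^!$ show $L^!$ is a cdga model for $X$ with zero differential, i.e.\ $X$ is formal. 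The non-degeneracy of the pairing between $V_A$ and $V_L$ then follows from Step 1 together with the identification of the CE generators with the natural topological pairing \eqref{eq:pair}, since a minimal Sullivan model identifies $V_A$ with a shifted dual of $\pi_*(X) \tensor \QQ$. The most delicate point, and the main obstacle I anticipate, is the bigrading argument in the previous paragraph: ruling out higher-than-quadratic relations in $L$ requires a careful analysis of the weight-filtration spectral sequence on $C^*(L)$, and is precisely where the combined formality and coformality hypotheses are crucially used.
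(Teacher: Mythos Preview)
The paper does not give its own proof of this theorem: it is stated with a citation to \cite{B1} (Berglund, \emph{Koszul spaces}) and no argument is supplied in the present paper. So there is no in-paper proof to compare your proposal against.

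That said, your outline is essentially the argument one finds in \cite{B1}, and the ingredients you have identified --- the Baues--Lemaire/Majewski bridge between dgl and cdga models, together with the algebraic Koszul duality package relating a quadratic commutative algebra $A'$, its dual Lie algebra $(A')^{!Lie}$, and the Chevalley--Eilenberg quasi-isomorphism criterion --- are exactly the right ones. The implications $(2)\Rightarrow(1)$ and $(3)\Rightarrow(1)$ go through cleanly as you describe.

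The point you flag as delicate really is the heart of the matter, and your sketch of $(1)\Rightarrow(2),(3)$ is not yet complete. You assert that ``$A$ is generated as an algebra by the image of $L[1]^\vee$,'' but this is precisely the non-degeneracy of the pairing $V_A\otimes V_L\to\QQ$, which is part of the conclusion rather than a hypothesis. What makes this work is that $C^*(L)$, being a Sullivan algebra, is in fact a \emph{minimal} Sullivan model when $L$ has zero differential (the differential on $C^*(L)$ is purely quadratic-and-higher in the generators $L[1]^\vee$), so its space of generators is canonically identified with the dual of $\pi_*(X)\otimes\QQ$. Formality then forces the minimal model of $A$ to coincide with $C^*(L)$, and comparing generators and relations in the two descriptions gives both the quadraticity of $A$ and $L$ and the diagonal concentration of $\Ext$. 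You should make this minimality argument explicit rather than appealing to a weight-filtration spectral sequence, which is a less direct route to the same conclusion.
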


Returning to the problem of finding rational models for the space $\aut(X)$, note that the component of the mapping space $\Map(X,X)$ that contains a fixed homotopy self-equivalence is equal to the same component of $\aut(X)$, since any map homotopic to a homotopy equivalence is itself a homotopy equivalence. Moreover, $\pi_1(\aut(X),1_X)$ is an abelian group as $\aut(X)$ is a monoid. By combining Theorem \ref{thm:map} and Theorem \ref{thm:koszul space} we obtain the following.

\begin{theorem} \label{thm:chain}
Let $X$ be a simply connected finite complex. If $X$ is formal and coformal then there is an isomorphism of rational vector spaces
\begin{equation} \label{eq:aut}
\pi_{k+1}(\aut(X),1_X) \tensor \QQ \cong \HH_k(\HH^*(X;\QQ)\tensor \pi_*(\Omega X),[\kappa,-]),\quad k\geq 0.
\end{equation}
Here $\kappa = \sum_i x_i\tensor \alpha_i$, where $x_1,\ldots,x_n$ is a basis for the indecomposables of $\HH^*(X;\QQ)$ and $\alpha_1,\ldots,\alpha_n$ is a dual basis for the indecomposables of $\pi_*(\Omega X)\tensor \QQ$ under the natural pairing between cohomology and homotopy.
\end{theorem}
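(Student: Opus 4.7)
The plan is to combine Theorem \ref{thm:map} (rational model for mapping spaces) with Theorem \ref{thm:koszul space} (simplest models for a formal and coformal space). Two preliminary observations: the path component of $1_X$ in $\aut(X)$ coincides with its path component in $\Map(X,X)$, since any self-map homotopic to a homotopy equivalence is itself an equivalence; and $\aut(X)$ is an H-space, so $\pi_1(\aut(X),1_X)$ is abelian and its Malcev completion agrees with its ordinary rationalization. Consequently, for every $k\geq 0$ the conclusion of Theorem \ref{thm:map} specializes to an honest isomorphism of rational vector spaces for $\pi_{k+1}(\aut(X),1_X)\tensor \QQ$, with no need to pass through exponentials of Lie algebras.

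Using formality and coformality, take $A = \HH^*(X;\QQ)$ as cdga model and $L = \pi_*(\Omega X)\tensor \QQ$ as dgl model for $X$, both equipped with the zero differential. Theorem \ref{thm:map} then yields
$$\pi_{k+1}(\Map(X,X),1_X)\tensor \QQ \cong \HH_k(A\tensor L, [\tau, -])$$
for any MC element $\tau$ whose component corresponds to the rational homotopy class of $1_X$; since both $A$ and $L$ carry vanishing differential, the twisted differential $d^\tau$ reduces to $[\tau,-]$. The theorem is therefore reduced to exhibiting the specific element $\kappa = \sum_i x_i\tensor \alpha_i$ as such a $\tau$.

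The Maurer--Cartan equation $[\kappa,\kappa]=0$ is the standard Koszul-duality fact that the canonical tensor is Maurer--Cartan: the element $\sum_{i,j}x_ix_j\tensor [\alpha_i,\alpha_j] \in \Lambda^2 V_A\tensor \LL^2 V_L$ corresponds to $\mathrm{id}_{\Lambda^2 V_A}$ under the non-degenerate pairing guaranteed by Theorem \ref{thm:koszul space}, and the orthogonality $R_A = R_L^\perp$ forces it into $R_A\tensor \LL^2 V_L + \Lambda^2 V_A\tensor R_L$, which is exactly the kernel of the projection to the word-length-two piece of $A\tensor L$. The more delicate step --- the expected main obstacle --- is showing that $\kappa$ represents the identity and not some other self-equivalence. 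The bijection of Theorem \ref{thm:map} realizes an MC element of $A\tensor L$ as a cdga morphism $C^*(L)\to A$; for $\kappa$ this is the morphism $\Lambda(L[1]^\vee)\to A$ sending $\alpha_i^\vee \mapsto x_i$, the MC condition being precisely compatibility with differentials. By the Baues--Lemaire theorem, $C^*(L)$ is a Sullivan model for $X$, and under the Koszul pairing this morphism is the canonical quasi-isomorphism $C^*(L)\stackrel{\sim}{\to}\HH^*(X;\QQ)$, i.e.\ the cdga model for the identity $X_\QQ \to X_\QQ$. Combining this identification with the isomorphism from Theorem \ref{thm:map} gives the stated formula.
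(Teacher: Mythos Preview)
Your proof is correct and follows exactly the approach the paper intends: the paper states Theorem~\ref{thm:chain} as an immediate consequence of combining Theorem~\ref{thm:map} with Theorem~\ref{thm:koszul space}, without spelling out the verification that $\kappa$ is a Maurer--Cartan element or that it represents the identity. You have supplied precisely those details, and your arguments for both points (the orthogonality $R_A=R_L^\perp$ forcing $[\kappa,\kappa]=0$, and the identification of $\kappa$ with the canonical quasi-isomorphism $C^*(L)\to A$ via the bijection $\MC(A\otimes L)\cong\Hom_{cdga}(C^*(L),A)$ described just before Theorem~\ref{thm:map}) are sound.
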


%

\subsection{Highly connected manifolds}
Consider a $(d-1)$-connected $2d$-dimensional closed manifold $M$ where $d\geq 2$. By Poincar\'e duality the cohomology of $M$ is of the form
$$\HH^*(M) = \HH^0(M) \oplus \HH^d(M) \oplus \HH^{2d}(M).$$
Moreover $\HH^0(M) \cong \HH^{2d}(M) \cong \ZZ$ and $\HH^d(M)\cong \ZZ^n$ for some $n$. Let $x_1,\ldots,x_n$ be a basis for $\HH^d(M)$. The cohomology algebra structure is completely determined by the integer $n\times n$-matrix $(q_{ij})$ where
$$q_{ij} = \langle x_i \cup x_j, [M] \rangle.$$
Since the cup product is graded commutative $q_{ij} = (-1)^d q_{ji}$ and $q_{ii} = 0$ if $d$ is odd. By the Hurewicz theorem there are classes $\alpha_i\in \pi_d(M)$ such that $\langle x_i,\alpha_j\rangle = \delta_{ij}$ under the pairing $\HH^d(M)\tensor \pi_d(M)\rightarrow \ZZ$.

By removing the interior of an embedded disk $D^{2d}\subset M$ we obtain a manifold $N := M\setminus \interior D^{2d}$ with boundary $\partial N \cong S^{2d-1}$. By reinserting the disk we recover the manifold $M$: there is a pushout square
\begin{equation} \label{eq:pushout}
\xymatrix{\partial N \ar[r] \ar[d]^i &  D^{2d} \ar[d]  \\ N \ar[r]^j & M.}
\end{equation}
The manifold $N$ is homotopy equivalent to an $n$-fold wedge of $d$-dimensional spheres, $N \simeq \vee^n S^d$, and the inclusion $i\colon \partial N \rightarrow N$ is determined up to homotopy by the corresponding class $Q\in \pi_{2d-1}(\vee^n S^d)$. By Hilton's calculation \cite{Hilton} an element $Q\in\pi_{2d-1}(\vee^n S^d)$ can be written uniquely as
\begin{equation} \label{eq:hilton}
Q = \sum_{i<j} a_{ij} [\iota_i,\iota_j] + \sum_i \iota_i \gamma_i,
\end{equation}
where $\iota_i\in \pi_d(\vee^n S^d)$ is the homotopy class of the inclusion of the $i^{th}$ wedge summand $S^d\rightarrow \vee^n S^d$, the $a_{ij}$ are integers and $\gamma_i\in\pi_{2d-1}(S^d)$. The coefficients $a_{ij}$ and the Hopf invariant of $\gamma_i$ can be read off from the intersection matrix: $a_{ij} = q_{ij}$ and $H(\gamma_i) = q_{ii}$, provided $\iota_i$ maps to $\alpha_i$ under the inclusion $N\subset M$. Let $K\subseteq \pi_{2d-1}(S^d)$ be the kernel of the Hopf invariant homomorphism $H\colon \pi_{2d-1}(S^d)\rightarrow \ZZ$. Then $K$ is a finite group and $\pi_{2d-1}(S^d) \cong \ZZ \oplus K$ if $d$ is even and $\pi_{2d-1}(S^d) = K$ if $d$ is odd. If the Hopf invariant of $\gamma_i$ is even (which must be the case if $d\ne 2,4, 8$ by Adams' famous theorem), then $\beta_i : = \gamma_i - \frac{H(\gamma_i)}{2} [\iota_i,\iota_i]$ has Hopf invariant $0$ and we may rewrite $Q$ as
\begin{equation} \label{eq:attaching map}
Q = \frac{1}{2}\sum_{i,j}q_{ij}[\iota_i,\iota_j] + \sum_i \iota_i \circ \beta_i
\end{equation}
If we agree to interpret $\frac{1}{2}[\iota_i,\iota_i]$ as an element of Hopf invariant $1$ when relevant, then the above expression remains valid in all cases. The class $\iota_i\colon S^d\rightarrow N$ is homotopic to an embedding with normal bundle $\nu_i$ represented by $[\nu_i]\in \pi_d BSO(d)$. The $J$-homomorphism
$$J\colon \pi_d BSO(d) \rightarrow \pi_{2d-1}(S^d)$$
maps $[\nu_i]$ to the element $\gamma_i$ of \eqref{eq:hilton}. In the special case $M = M_g = \#^g (S^d\times S^d)$, \eqref{eq:hilton} reduces to
\begin{equation} \label{eq:red}
Q = [\iota_1,\iota_{g+1}] + \ldots + [\iota_g,\iota_{2g}]
\end{equation}
when $\iota_1,\ldots,\iota_g$ are the inclusions into the first factor of the $g$ summands $S^d\times S^d$ and $\iota_{g+1},\ldots,\iota_{2g}$ are the inclusions into the second factor. Indeed, the normal bundles $\nu_i$ are all trivial, so the elements $\gamma_i$ in \eqref{eq:hilton} vanish.

The rational homotopy Lie algebra of a wedge of spheres $\vee^n S^d$ is a free graded Lie algebra
$$\pi_*(\Omega (\vee^n S^d)) \tensor \QQ \cong \LL(\iota_1,\ldots,\iota_n)$$
where the class $\iota_i\colon S^{d-1}\rightarrow \Omega (\vee^n S^d)$ is represented by the adjoint of the inclusion of the $i^{th}$ wedge summand \cite{Hilton}.

\begin{proposition} \label{prop:M}
Let $M$ be a $(d-1)$-connected $2d$-dimensional closed manifold with $d\geq 2$ such that $n = \rank \HH^d(M)\geq 2$. Then $M$ is formal and coformal and the rational homotopy Lie algebra of $M$ is given by
\begin{equation} \label{eq:liealg}
\pi_*(\Omega M) \tensor \QQ \cong \LL(\alpha_1,\ldots,\alpha_n)/(Q),\quad Q = \frac{1}{2}\sum_{i,j} q_{ij} [\alpha_i,\alpha_j],
\end{equation}
where $\alpha_i$ are classes of degree $d-1$ and $(q_{ij})$ is the cup product matrix of $M$.
\end{proposition}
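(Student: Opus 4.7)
The plan is to proceed in three steps: first establish formality, then verify Koszulness of the cohomology ring, and finally invoke Theorem~\ref{thm:koszul space} to extract coformality and the explicit description of the rational homotopy Lie algebra.

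\textbf{Formality.} Since $d\ge 2$, the manifold $M$ is $(d-1)$-connected of dimension $2d\le 4d-2$, so Miller's theorem \cite{Mi} applies and $M$ is formal.

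\textbf{Koszulness of $A=H^*(M;\QQ)$.} Using Poincar\'e duality I first identify the indecomposables as $V_A=\mathrm{span}(x_1,\ldots,x_n)$ in degree $d$: indeed the cup product $V_A\otimes V_A\to H^{2d}(M;\QQ)\cong\QQ$ is surjective, so the top class is decomposable, and there are no generators in higher degree. The kernel $R_A\subseteq \Lambda^2V_A$ of the multiplication map is a codimension-one subspace cut out by the linear form $q$. I would then check that the natural surjection $\Lambda V_A/(R_A)\twoheadrightarrow A$ is an isomorphism. The only nontrivial point is that $(R_A)$ must contain the whole of $\Lambda^{\ge 3}V_A$; using $n\ge 2$ and the nondegeneracy of $q$, any cubic monomial $x_ix_jx_k$ can be rewritten via an element of the form $q_{lm}x_jx_k-q_{jk}x_lx_m\in R_A$ until a monomial with a repeated index (and hence either a factor in $R_A$ or a vanishing exterior square) appears. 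Having established that $A$ is quadratic, Koszulness has to be verified. The algebra $A$ is a graded Frobenius algebra of length two, generated in a single degree by a nondegenerate quadratic form; such algebras are classically known to be Koszul when $\dim V_A\ge 2$, which I would prove either by exhibiting an explicit small resolution of $\QQ$ over $A$ and checking it is minimal and pure, or by a PBW-basis argument after choosing a standard basis of $V_A$ diagonalising $q$.

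\textbf{Koszul dual and conclusion.} Once $A$ is shown to be Koszul, Theorem~\ref{thm:koszul space} gives at once that $M$ is coformal and that $\pi_*(\Omega M)\otimes\QQ$ is the Koszul dual Lie algebra $A^{!\mathrm{Lie}}$. To identify this dual concretely, let $\alpha_1,\ldots,\alpha_n\in V_L$ be the basis in degree $d-1$ dual to $x_1,\ldots,x_n$ under the pairing from Theorem~\ref{thm:koszul space}. Since $R_A\subset \Lambda^2V_A$ is the codimension-one hyperplane $\{\sum c_{ij}x_ix_j : \sum c_{ij}q_{ij}=0\}$, its orthogonal complement $R_L = R_A^\perp \subset \LL^2(V_L)$ is the one-dimensional subspace spanned by the element dual to $q$, namely $Q=\tfrac12\sum q_{ij}[\alpha_i,\alpha_j]$. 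Hence $\pi_*(\Omega M)\otimes\QQ \cong \LL(\alpha_1,\ldots,\alpha_n)/(Q)$ as stated. This $Q$ coincides with the rational attaching class \eqref{eq:attaching map} (up to the identification of the Hurewicz basis), which makes the formula geometrically coherent with the handle decomposition of $M$.

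\textbf{Main obstacle.} The only real work is verifying Koszulness of $A$; the rest is essentially bookkeeping. A secondary issue is handling signs carefully in the pairing $\Lambda^2V_A\otimes \LL^2(V_L)\to\QQ$ so that the generator of $R_L$ is precisely $\tfrac12\sum q_{ij}[\alpha_i,\alpha_j]$ in both parities of $d$ (where $q$ is symmetric resp.\ antisymmetric and $\Lambda^2V_A$ is symmetric resp.\ exterior).
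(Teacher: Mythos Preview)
Your approach is correct and arrives at the same conclusion, but you enter Theorem~\ref{thm:koszul space} through a different door than the paper does. The paper simply cites \cite[Proposition 4.4]{NM} to get \emph{both} formality and coformality of $M$ in one stroke, and then applies the ``Furthermore'' clause of Theorem~\ref{thm:koszul space} (with hypothesis (1) already in hand) to read off the Koszul dual Lie algebra. You instead establish formality via \cite{Mi}, then verify directly that $A=\HH^*(M;\QQ)$ is a Koszul algebra, and finally invoke implication $(2)\Rightarrow(1)$ of Theorem~\ref{thm:koszul space} to deduce coformality together with the duality statement. The explicit identification of $R_A^\perp$ as the line spanned by $Q=\tfrac12\sum q_{ij}[\alpha_i,\alpha_j]$ is identical in both arguments.

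What each buys: the paper's route is shorter, since coformality of highly connected manifolds in this dimension range is already in the literature and no Koszulness check is needed. Your route is more self-contained and makes the algebraic structure of $A$ explicit; it also illustrates why the hypothesis $n\ge 2$ is genuinely needed (for $n=1$ the algebra $\QQ[x]/(x^3)$ is not even quadratic, as the paper remarks after Proposition~\ref{prop:epsilon}). Your acknowledged ``main obstacle''---the Koszulness of a length-two graded Frobenius algebra on $n\ge 2$ generators with nondegenerate form---is indeed a standard fact, and either of your suggested proofs (explicit linear resolution or PBW/quadratic Gr\"obner basis after diagonalising $q$) works without difficulty.
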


\begin{proof}
That $M$ is formal and coformal follows from \cite[Proposition 4.4]{NM}. Hence, by Theorem \ref{thm:koszul space} the homotopy Lie algebra $L = \pi_*(\Omega M)\tensor \QQ$ is Koszul dual to the cohomology algebra $A=\HH^*(M;\QQ)$. This means that it is generated by the classes $\alpha_i\in \pi_d(M) \cong \pi_{d-1}(\Omega M)$ dual to $x_i\in \HH^d(M)$ modulo the orthogonal relations $R_L = R_A^\perp$. A relation
$$\sum_{i,j} c_{ij} x_i\cup x_j = 0,\quad c_{ij}\in\QQ,$$
holds in $\HH^*(M;\QQ)$ if and only if
$$\sum_{i,j} c_{ij} q_{ij} = \langle \sum_{i,j} c_{ij} x_i\cup x_j, [M] \rangle = 0.$$
Therefore, $R_A^\perp$ is one-dimensional and spanned by the single relation
$$\sum_{i,j} q_{ij} [\alpha_i,\alpha_j] = 0.$$
\end{proof}

There is a formula for the dimension $\eta_r$ of the weight $r$ component of the free graded Lie algebra  $\LL(\alpha_1,\ldots,\alpha_n)$ which is due Witt:
\begin{equation} \label{eq:witt}
\eta_r = \frac{1}{r}\sum_{\ell|r} \sigma \mu(\ell)n^{r/\ell}.
\end{equation}
Here $\sigma$ is the sign $\sigma = (-1)^{(d-1)(r+r/\ell)}$ and $\mu$ is the M\"obius function from number theory, i.e., $\mu(1) = 1$, $\mu(p_1\ldots p_m) = (-1)^m$ if $p_1,\ldots,p_m$ are distinct primes, and $\mu(\ell) = 0$ unless $\ell$ is square-free. There is a similar but more complicated formula for the dimensions of the Lie algebra $\LL(\alpha_1,\ldots,\alpha_n)/(Q)$.

\begin{proposition} \label{prop:epsilon}
With $M$ as in Proposition \ref{prop:M} the dimensions
$$\epsilon_r = \dim \pi_{r(d-1)}(\Omega M)\tensor \QQ$$
depend only on $n$ and the parity of $d$. They are given by the formula
\begin{equation} \label{eq:epsilon}
\epsilon_r = \sum_{\ell|r} \sigma \frac{\mu(\ell)}{\ell} \sum_{p+2q = r/\ell} \frac{(-1)^q n^p \binom{p+q}{p}}{p+q},
\end{equation}
where $\sigma = (-1)^{(d-1)(r+r/\ell)}$.
\end{proposition}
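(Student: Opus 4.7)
My plan is to compute the weight-graded Hilbert series of the universal enveloping algebra $UL$ of the homotopy Lie algebra $L = \pi_*(\Omega M)\tensor\QQ$, and then extract the dimensions $\epsilon_r = \dim L^{(r)}$ via the Poincar\'e--Birkhoff--Witt theorem together with M\"obius inversion. By Proposition~\ref{prop:M} combined with Theorem~\ref{thm:koszul space}, $L$ is Koszul dual to $A = \HH^*(M;\QQ)$. The latter has weight-graded Hilbert series $P_A(s) = 1 + ns + s^2$, since the weight-$0$ part is $\QQ\cdot 1$, the weight-$1$ part is spanned by $x_1,\ldots,x_n$, and the weight-$2$ part is the one-dimensional top cohomology $\HH^{2d}(M;\QQ)$. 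The standard Hilbert series identity for a Koszul pair, $P_A(-s)\cdot P_{UL}(s) = 1$, then gives
$$P_{UL}(s) = \frac{1}{1-ns+s^2}.$$

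On the other hand, the PBW theorem computes $P_{UL}(s)$ from the $\epsilon_r$. Since $L^{(r)}$ lies in internal homological degree $r(d-1)$, its PBW contribution to $P_{UL}(s)$ is $(1-s^r)^{-\epsilon_r}$ when $r(d-1)$ is even and $(1+s^r)^{\epsilon_r}$ when $r(d-1)$ is odd. Writing $\sigma_r = (-1)^{(d-1)r}$ these combine uniformly to $(1-\sigma_r s^r)^{-\sigma_r \epsilon_r}$, so
$$\prod_{r\geq 1}(1-\sigma_r s^r)^{-\sigma_r\epsilon_r} = \frac{1}{1-ns+s^2}.$$
Taking logarithms, expanding the left side as a double power series and the right side via $-\log(1-ns+s^2) = \sum_{k\geq 1}\frac{T_k}{k}s^k$ with $T_k = \lambda_1^k + \lambda_2^k$ the $k$-th Newton power sum of the roots $\lambda_1,\lambda_2$ of $1-ns+s^2$, and equating coefficients of $s^N$ yields the convolution identity
$$(-1)^{(d-1)N}\,T_N = \sum_{r\mid N} r(-1)^{(d-1)r}\epsilon_r.$$
A standard M\"obius inversion on divisors of $r$ then produces $\epsilon_r = \frac{1}{r}\sum_{\ell\mid r}\mu(\ell)(-1)^{(d-1)(r+r/\ell)}T_{r/\ell}$.

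Finally, I would derive the explicit formula $T_k = \sum_{p+2q=k}(-1)^q\frac{k}{p+q}\binom{p+q}{p}n^p$ from the recursion $T_k = nT_{k-1}-T_{k-2}$, $T_0 = 2$, $T_1 = n$ (equivalently from Newton's identities for the polynomial $1-ns+s^2$) by a short induction on $k$. Substituting this expression for $T_{r/\ell}$ into the M\"obius-inverted formula and simplifying the common factor $\frac{1}{r}\cdot\frac{r/\ell}{p+q} = \frac{1}{\ell(p+q)}$ produces precisely the formula~\eqref{eq:epsilon}. The main technical obstacle is the sign bookkeeping — verifying that the parity signs $\sigma_r^{m+1} = (-1)^{(d-1)(N+r)}$ arising from the PBW expansion combine correctly with the inversion to give the composite sign $(-1)^{(d-1)(r+r/\ell)}$ in the final answer — but this is a finite combinatorial check rather than a conceptual difficulty.
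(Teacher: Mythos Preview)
Your proposal is correct and follows essentially the same route as the paper: Koszul duality gives $P_{UL}(s)=(1-ns+s^2)^{-1}$, PBW expresses $P_{UL}$ as an infinite product in the $\epsilon_r$, and taking logarithms followed by M\"obius inversion yields \eqref{eq:epsilon}. The only cosmetic difference is that the paper expands $-\log(1-ns+s^2)=\sum_{m\geq 1}(ns-s^2)^m/m$ directly via the binomial theorem to obtain the inner sum $\sum_{p+2q=k}(-1)^q n^p\binom{p+q}{p}/(p+q)$, whereas you name these coefficients as Newton power sums $T_k/k$ and propose to verify the same closed form by the recursion $T_k=nT_{k-1}-T_{k-2}$.
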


\begin{proof}
If $A$ is a graded commutative Koszul algebra with Koszul dual Lie algebra $L$, then there are certain numerical relations between the dimensions of the weight graded components of $A$ and those of the universal enveloping algebra $UL$, which express themselves as an equality of formal power series
\begin{equation} \label{eq:froberg}
P_A(-z) P_{UL}(z) = 1.
\end{equation}
Here
$$P_A(z) = \sum_{s\geq 0} \dim_\QQ A(s) z^s,$$
where $A(s)$ denotes the component of weight $s$. In our case, where $A = \HH^*(M;\QQ)$ and $L = \pi_*(\Omega M)\tensor \QQ$, we have that $P_A(z) = 1 + nz +z^2$, so \eqref{eq:froberg} implies that
\begin{equation} \label{eq:series}
P_{UL}(z) = \frac{1}{1-nz+z^2}.
\end{equation}
On the other hand, by the Poincar\'e-Birkhoff-Witt theorem there is an isomorphism of graded vector spaces between the universal enveloping algebra $UL$ and the free graded commutative algebra $\Lambda L$. Since the weight $r$ component $L(r)$ is concentrated in homological degree $r(d-1)$, this implies that
\begin{equation} \label{eq:PBW}
P_{UL}(z) = \left\{ \begin{array}{ll} (1-z)^{-\epsilon_1}(1-z^2)^{-\epsilon_2}(1-z^3)^{-\epsilon_3}\ldots, & \mbox{if $d$ is odd,} \\ (1+z)^{\epsilon_1}(1-z^2)^{-\epsilon_2}(1+z^3)^{\epsilon_3}(1-z^4)^{-\epsilon_4}\ldots, & \mbox{if $d$ is even.} \end{array} \right.
\end{equation}
We will deal with the case $d$ odd and leave the even case to the reader. If we take logarithms of \eqref{eq:series} and \eqref{eq:PBW} and expand in Taylor series we get, respectively,
$$\sum_{m \geq 1} \frac{(nz - z^2)^m}{m} = \sum_{\substack{p,q \\ p+q\geq 1}} \frac{(-1)^qn^p \binom{p+q}{p}}{p+q} z^{p+2q}$$
and
$$\sum_{k,\ell\geq 1} \epsilon_\ell\frac{z^{k\ell}}{k}.$$
When identifying $z^r$-coefficients we obtain
$$\sum_{p+2q = r} \frac{(-1)^qn^p \binom{p+q}{p}}{p+q} = \sum_{\ell | r} \epsilon_\ell\frac{\ell}{r},$$
and by applying the M\"obius inversion formula we arrive at the formula in \eqref{eq:epsilon}.
\end{proof}

The condition $n\geq 2$ in Proposition \ref{prop:M} is necessary as shown by the following example: The manifold $\CP{2}$ is formal, e.g., because it is K\"ahler, but it is not coformal because the cohomology $\HH^*(\CP{2};\QQ) = \QQ[x]/(x^3)$ is not a quadratic algebra, let alone a Koszul algebra. The rational homotopy Lie algebra $\pi_*(\Omega \CP{2})\tensor \QQ$ is abelian, with two indecomposable classes  $\alpha$ and $\beta$ in degrees $1$ and $4$, and does not have the form \eqref{eq:liealg}.

For $M_g = \#^g (S^d\times S^d)$ we have $n = 2g$, and the intersection matrix is
$$\left( \begin{array}{cc} 0 & I \\ (-1)^d I & 0 \end{array} \right),$$
where $I$ denotes the identity $g\times g$-matrix. Thus, $\pi_*(\Omega M_g)\tensor \QQ$ is generated by classes $\alpha_1,\ldots,\alpha_{2g}$ of degree $d-1$ modulo the relation
$$[\alpha_1,\alpha_{g+1}] + \ldots + [\alpha_g,\alpha_{2g}] = 0.$$

\subsection{Homotopy self-equivalences of highly connected manifolds}
For a pair of topological spaces $A\subset X$, let $\aut_A(X)$ denote the topological monoid of homotopy self-equivalences of $X$ that fix $A$ pointwise. We will now analyze the spaces of homotopy self-equivalences $\aut_{\partial N}(N)$, $\aut_*(M)$ and $\aut(M)$ where $M$ is a highly connected closed manifold and $N = M \setminus D$ where $D\subset M$ is an embedded disk of codimension $0$.

There is a map $e\colon \aut_{\partial N}(N) \rightarrow \aut_D(M)$ which extends a self-map of $N$ that fixes the boundary by the identity on the disk $D\subset M$. For diffeomorphisms or homeomorphisms the corresponding map is an isomorphism, the inverse being given by restriction to $N$, but for homotopy self-equivalences the map $e$ is not even a rational homotopy equivalence.

\begin{theorem} \label{thm:aut}
If $n = \rank \HH^d(M)\geq 3$ then the homotopy groups
$$\pi_k(\aut_{\partial N}(N),1_N), \quad \pi_k(\aut_*(M),1_M), \quad \pi_k(\aut(M),1_M),$$
are finite unless $k = r(d-1)$. For $r\geq 1$, the ranks are given by, respectively,
$$n \eta_{r+1} - \eta_{r+2}, \quad n\epsilon_{r+1} - \epsilon_{r+2}, \quad n\epsilon_{r+1} - \epsilon_{r+2} - \epsilon_r$$
where $\epsilon_r$ and $\eta_r$ are given by \eqref{eq:witt} and \eqref{eq:epsilon}. In particular, the ranks depend only on $n$ and the parity of $d$.
\end{theorem}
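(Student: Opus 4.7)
My plan is to apply Theorem \ref{thm:chain} directly to $\aut(M)$, set up based and relative analogues for $\aut_*(M)$ and $\aut_{\partial N}(N)$, and identify the resulting chain complexes explicitly using the formality/coformality of $M$ and $N \simeq \vee^n S^d$ guaranteed by Proposition \ref{prop:M}. The main computation is then a weight-grading argument that reduces everything to dimension counts of short complexes built from $L = \pi_*(\Omega M)\tensor\QQ$ and $L_N = \LL(\alpha_1,\ldots,\alpha_n)$.

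First, let me describe the three complexes. For $\aut(M)$, Theorem \ref{thm:chain} gives the complex $(A\tensor L, [\kappa,-])$ with $A = \HH^*(M;\QQ) = \QQ\{1\}\oplus V \oplus \QQ\{\omega\}$ and $\kappa = \sum x_i\tensor \alpha_i$. For $\aut_*(M)$, an analogous argument applied to based mapping spaces (using reduced cohomology $\tilde A = V\oplus \QQ\{\omega\}$) yields $(\tilde A\tensor L, [\kappa,-])$. For $\aut_{\partial N}(N)$, one writes it as the homotopy fiber of the restriction $\aut(N)\rightarrow \Map(\partial N,N)_i$ over the inclusion $i$, applies Theorem \ref{thm:map} on both sides with the Maurer-Cartan element $\tau = z\tensor Q^* \in \HH^*(\partial N)\tensor L_N$ (where $Q^* = \frac{1}{2}\sum q_{ij}[\alpha_i,\alpha_j]$), and extracts a relative complex of the form $V\tensor L_N \rightarrow \omega\tensor L_N$, where the role of $\omega$ (a formal class of degree $2d$) records the attaching map $Q$.

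Next, using the weight grading on $L$ (with $\alpha_i$ in weight $1$), each complex splits into finite-dimensional summands parametrized by a weight $w\geq 0$:
\begin{align*}
\aut(M):&\quad L(w)\xrightarrow{d_1} V\tensor L(w+1)\xrightarrow{d_2} \omega\tensor L(w+2),\\
\aut_*(M):&\quad V\tensor L(w+1)\xrightarrow{d_2} \omega\tensor L(w+2),\\
\aut_{\partial N}(N):&\quad V\tensor L_N(w+1)\xrightarrow{d_2} \omega\tensor L_N(w+2),
\end{align*}
sitting in chain degrees $w(d-1),\,w(d-1)-1,\,w(d-1)-2$ respectively. The differentials are $d_1(\beta) = \sum_i x_i\tensor[\alpha_i,\beta]$ and $d_2(x_j\tensor\gamma) = \omega\tensor\sum_i q_{ij}[\alpha_i,\gamma]$.

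The heart of the proof is showing that each short complex has homology concentrated in a single position. For $d_2$, surjectivity onto $L(w+2)$ (or $L_N(w+2)$) follows because $L$ and $L_N$ are generated in weight $1$, so the image spans all decomposables in weights $\geq 2$; after the change of basis $\beta_j = \sum_i q_{ij}\alpha_i$ (nondegenerate by Poincar\'e duality), $d_2$ becomes $x_j\tensor\gamma\mapsto \omega\tensor[\beta_j,\gamma]$ and surjects. For $d_1$, injectivity amounts to triviality of the center of $L$ in positive weights --- this is the \textbf{main obstacle} and is where the hypothesis $n\geq 3$ enters (for $n=2$ the quotient $\LL(\alpha_1,\alpha_2)/([\alpha_1,\alpha_2])$ is abelian, so the center is the whole Lie algebra). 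The cleanest route is Koszul duality: since $L$ is the Koszul dual of the Koszul algebra $A$ (Theorem \ref{thm:koszul space} and Proposition \ref{prop:M}), an element $\beta$ in the center of $L$ would produce, under the Koszul bar construction, an obstruction in $A$ that is ruled out for $n\geq 3$ by the nondegeneracy of the quadratic form $(q_{ij})$ and the fact that the algebra of quadratic relations has full rank. Once concentration of homology is established, the dimensions follow from Euler characteristics: for $k = r(d-1)$, $r\geq 1$, Theorem \ref{thm:chain} gives $\pi_k = H_{k-1}$, which equals the middle (resp. left) cohomology of the $w=r$ complex, yielding the three formulas $n\epsilon_{r+1}-\epsilon_{r+2}-\epsilon_r$, $n\epsilon_{r+1}-\epsilon_{r+2}$, and $n\eta_{r+1}-\eta_{r+2}$. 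Finiteness in degrees $k\not\equiv 0\,(d-1)$ is automatic because the complex is zero in those chain degrees once $d_1,d_2$ are known injective/surjective. The statement about $\pi_0$ and $\Aut_{alg}\HH^*(M;\ZZ)$ requires a separate argument, using that a rational homotopy self-equivalence is determined up to finite ambiguity by its action on $\HH^d$, combined with the fact that this action must preserve the intersection form (hence lands in a finite-index subgroup of $\Aut_{alg}$).
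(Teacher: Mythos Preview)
Your overall architecture matches the paper's proof almost exactly: split $(A\otimes L,[\kappa,-])$ by weight, show the resulting three-term complexes have homology concentrated in the middle, read off the Euler characteristic, and treat $\aut_*(M)$ and $\aut_{\partial N}(N)$ by passing to the augmentation ideal and to the free Lie algebra $\LL = \pi_*(\Omega N)\otimes\QQ$ respectively. Your surjectivity argument for $d_2$ is fine and agrees with the paper's (invertibility of $(q_{ij})$).

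The one genuine gap is your argument that the center of $L=\LL(\alpha_1,\ldots,\alpha_n)/(Q)$ is trivial for $n\ge 3$. You correctly flag this as the main obstacle, but the sentence ``an element $\beta$ in the center of $L$ would produce, under the Koszul bar construction, an obstruction in $A$ that is ruled out for $n\ge 3$ by the nondegeneracy of the quadratic form and the fact that the algebra of quadratic relations has full rank'' is not a proof---it does not say what the obstruction is, where it lives, or why it vanishes precisely when $n\ge 3$. Note that nondegeneracy of $(q_{ij})$ holds already for $n=2$, where the center is \emph{not} trivial, so nondegeneracy alone cannot be the mechanism. The paper closes this gap by invoking a result of B\o{}gvad: a graded Lie algebra of finite global dimension has nontrivial center only if its Euler characteristic $\chi(L)=\sum_i(-1)^i\dim\Ext_{UL}^i(\QQ,\QQ)$ vanishes. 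Here Koszul duality gives $\Ext_{UL}^i(\QQ,\QQ)\cong A(i)\cong \HH^{id}(M;\QQ)$, so $L$ has global dimension $2$ and $\chi(L)=2-n$, which is nonzero exactly when $n\ne 2$. If you want to keep a Koszul-flavored argument, this is the form it should take.

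A minor point: the last paragraph about $\pi_0$ and $\Aut_{alg}\HH^*(M;\ZZ)$ does not belong here---that is the content of the next theorem, not this one.
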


\begin{proof}
Since $M$ is formal and coformal we may use Theorem \ref{thm:chain} to calculate the rational homotopy groups of $\aut(M)$. Let $A=\HH^*(M;\QQ)$ and $L = \pi_*(\Omega M)\tensor \QQ$. Since $A(s) = 0$ for $s>2$, the chain complex $(A\tensor L,[\kappa,-])$ splits as a direct sum of chain complexes
$$\xymatrix{A(0)\tensor L(r) \ar[r]^-{[\kappa,-]} & A(1)\tensor L(r+1) \ar[r]^-{[\kappa,-]} & A(2)\tensor L(r+2).}$$
By inspection, this chain complex is isomorphic to the following:
\begin{equation} \label{eq:short}
\xymatrix{L(r) \ar[r]^-{\partial_1} & L(r+1)^n[-d] \ar[r]^-{\partial_0} & L(r+2)[-2d],}
\end{equation}
\begin{equation*}
\partial_1(\xi) = ([\alpha_1,\xi],\ldots,[\alpha_n,\xi]), \quad \partial_0(\zeta_1,\ldots,\zeta_n) = \sum_{i,j} q_{ij}[\alpha_i,\zeta_j].
\end{equation*}
Since the intersection form is non-degenerate, the matrix $(q_{ij})$ is invertible, and therefore $\partial_0$ is surjective. Since $\alpha_1,\ldots,\alpha_n$ generate the graded Lie algebra $L$, the kernel of $\partial_1$ may be identified with the center $Z(L)$. We will argue that the center is trivial if $n\geq 3$. To this end, we invoke \cite[Proposition 2]{Bog} which says that a graded Lie algebra of finite global dimension has non-trivial center only if the Euler characteristic $\chi(L)$ is zero, where
$$\chi(L) = \sum_i (-1)^i \dim_\QQ \Ext_{UL}^i(\QQ,\QQ),$$
and $UL$ denotes the universal enveloping algebra of $L$. In the situation at hand, $\Ext_{UL}^i(\QQ,\QQ) \cong A(i) \cong \HH^{id}(M;\QQ)$, so it follows that $L$ has global dimension $2$ and that $\chi(L) = 2-n$, whence $L$ must have trivial center whenever $n\ne 2$. In particular, the kernel of $\partial_1$ is trivial for $n\geq 3$. Thus, we can only have non-vanishing homology at the middle term of \eqref{eq:short}. It follows that the dimension of the middle homology group is
$$n\epsilon_{r+1} - \epsilon_r -\epsilon_{r+2},$$
where $\epsilon_r$ is the dimension of the component $L(r)$. The middle term is situated in homological degree $(r+1)(d-1)-d = r(d-1)-1$, so it follows from \eqref{eq:aut} that
$$\dim_\QQ \pi_{r(d-1)}(\aut(M),1_M)\tensor \QQ  = n\epsilon_{r+1} - \epsilon_r -\epsilon_{r+2},\quad r\geq 1.$$

The rational homotopy groups of $\aut_*(M)$ can be calculated as follows. The augmentation map $\epsilon \colon A\rightarrow \QQ$ is a model for the inclusion of the base-point $* \rightarrow M$. It follows from Theorem \ref{thm:map} and Theorem \ref{thm:chain} that the evaluation fibration
$$\aut_*(M)_{1} \rightarrow \aut(M)_{1} \rightarrow M$$
is modeled by the short exact sequence of differential graded Lie algebras
$$0\rightarrow (\overline{A}\tensor L,[\kappa,-]) \rightarrow (A\tensor L,[\kappa,-]) \stackrel{\epsilon \tensor 1}{\rightarrow} (L,0) \rightarrow 0$$
where $\overline{A} = \ker{\epsilon}$ denotes the augmentation ideal of $A$. The chain complex $(\overline{A}\tensor L,[\kappa,-])$ is the direct sum of
$$\xymatrix{A(1)\tensor L(r+1) \ar[r]^-{[\kappa,-]} & A(2)\tensor L(r+2)}$$
over all $r$ and the calculation above shows that the homology is concentrated in degree $r(d-1)-1$ and is of dimension $n\epsilon_{r+1} - \epsilon_{r+2}$.

Finally, to determine the rational homotopy groups of $\aut_{\partial N}(N)$ we consider the fibration
\begin{equation} \label{eq:fibration}
\aut_{\partial N}(N) \rightarrow \aut_*(N) \stackrel{i^*}{\rightarrow} \Map_*(\partial N,N).
\end{equation}
Since $N\simeq \vee^n S^d$ is formal and coformal, we may use Theorem \ref{thm:chain} to calculate the rational homotopy groups of $\aut_*(N)$. We find that
$$\pi_{*+1}(\aut_*(N),1_N)\tensor \QQ \cong \rHH(N;\QQ) \tensor \pi_*(\Omega N)\cong \LL^n[-d]$$
where $\LL = \pi_*(\Omega N)\tensor \QQ \cong \LL(\iota_1,\ldots,\iota_n)$ is the free graded Lie algebra on generators $\iota_i$ of degree $d-1$. The rational homotopy groups of $\Map_*(\partial N,N)$ can be calculated using Theorem \ref{thm:map}:
$$\pi_{*+1}(\Map_*(\partial N,N),i)\tensor \QQ \cong \rHH(S^{2d-1};\QQ)\tensor \pi_*(\Omega N)\cong \LL[-2d+1].$$
A calculation then shows that we may identify
$$i^* \colon \pi_{*+1}(\aut_*(N),1_N)\tensor \QQ \rightarrow \pi_{*+1}(\Map_*(\partial N,N),i)\tensor \QQ$$
with
\begin{equation} \label{eq:map}
\LL^n[-d] \rightarrow \LL[-2d+1]
\end{equation}
$$(\zeta_1,\ldots,\zeta_n)\mapsto \sum_{i,j} q_{ij}[\zeta_i,\iota_j].$$
Since $(q_{ij})$ is non-degenerate this map is surjective. Hence the rational homotopy sequence associated to the fibration \eqref{eq:fibration} splits and $\pi_{*+1}(\aut_{\partial N}(N),1_N)\tensor \QQ$ is isomorphic to the kernel of the map \eqref{eq:map}. A dimension counting argument finishes the proof.
\end{proof}

The condition $n\geq 3$ in Theorem \ref{thm:aut} is necessary to ensure that the Lie algebra $\pi_*(\Omega M)\tensor \QQ$ has trivial center. For $S^d\times S^d$ the Lie algebra is $\LL(\alpha_1,\alpha_2)/([\alpha_1,\alpha_2])$ which is abelian if $d$ is odd and has center spanned by $[\alpha_1,\alpha_1]$ and $[\alpha_2,\alpha_2]$ if $d$ is even.

\subsection{Groups of components}
We will now turn to the determination of the groups of components of the spaces $\aut_{\partial N}(N)$, $\aut_*(M)$ and $\aut(M)$ up to commensurability. First of all $\pi_0\aut_*(M) \cong\pi_0\aut(M)$ since $M$ is simply connected. The automorphism group of the cohomology algebra $\Aut_{alg}(\HH^*(M))$ is isomorphic to the automorphism group of the intersection form
$$\Aut(\ZZ^n,q) = \set{\lambda\in \GL_n(\ZZ)}{\lambda^t q \lambda = q}.$$
Recall that the attaching map $Q$ of the top cell in $M$ has the form
$$Q = \frac{1}{2}\sum_{i,j}q_{ij}[\iota_i,\iota_j] + \sum_i \iota_i \circ \beta_i$$
where $\beta_i\in \pi_{2d-1}(\vee^n S^d)$ have Hopf invariant $0$. Consider the subgroup
$$\Aut(\ZZ^n,q,\beta) = \set{\lambda\in \GL_n(\ZZ)}{\lambda^t q \lambda = q,\quad \lambda^t \beta = \beta} \subseteq \Aut(\ZZ^n,q).$$
This subgroup has finite index because it is an isotropy subgroup of the action of $\Aut(\ZZ^n,q)$ on the finite abelian group $K^n$, where $K = \ker(H\colon \pi_{2d-1}(S^d)\rightarrow \ZZ)$.

\begin{theorem} \label{thm:arithmetic}
There are group homomorphisms
$$\pi_0\aut_{\partial N}(N) \stackrel{e}{\rightarrow} \pi_0\aut_*^+(M) \rightarrow \Aut(\ZZ^n,q,\beta)$$
that are surjective and have finite kernels. In particular, all groups in the above sequence are commensurable with $\Aut(\ZZ^n,q) \cong \Aut_{alg}(\HH^*(M))$.
\end{theorem}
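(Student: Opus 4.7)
The plan is to establish surjectivity and finiteness of the kernel for the composite $h \circ e \colon \pi_0 \aut_{\partial N}(N) \to \Aut(\ZZ^n, q, \beta)$. The analogous properties for the individual maps then follow formally, since $\ker e \subseteq \ker(h \circ e)$ and $\ker h$ is the image of $\ker(h \circ e)$ under the surjection $e$.

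For well-definedness of the second map and surjectivity of the composite, I exploit the CW decomposition $M = N \cup_Q D^{2d}$. Given $\phi \in \aut_*^+(M)$, cellular approximation lets me assume $\phi(N) \subseteq N$; the extendability of $\phi|_N$ over the top cell then forces $(\phi|_N)_* Q = Q$ in $\pi_{2d-1}(N)$. Expanding $Q$ via Hilton's decomposition \eqref{eq:attaching map} and using that precomposition with each $\beta_i$ (of Hopf invariant zero) is linear, I extract the conditions $\lambda^t q \lambda = q$ and $\lambda^t \beta = \beta$ for $\lambda = (\phi|_N)_* \in \GL_n(\ZZ)$. Conversely, given $\lambda \in \Aut(\ZZ^n, q, \beta)$, I realize it on $N \simeq \bigvee^n S^d$ by $\iota_i \mapsto \sum_k \lambda_{ki} \iota_k$ to obtain a self-equivalence $f$; the same computation run in reverse yields $f \circ i \simeq i$ as maps $\partial N \to N$. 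The homotopy extension property for the cofibration $\partial N \hookrightarrow N$ then allows me to deform $f$ to $\tilde f$ with $\tilde f|_{\partial N} = 1_{\partial N}$, producing a class in $\pi_0 \aut_{\partial N}(N)$ above $\lambda$.

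The heart of the finiteness statement is that $\pi_0 \aut_*(M_\QQ) \to \GL(\HH^d(M;\QQ))$ is injective. Indeed, by Proposition \ref{prop:M}, $M$ is formal and coformal with quadratic rational homotopy Lie algebra $L$ generated by $L(1)$, dual to $\HH^d(M;\QQ)$, and a dgl automorphism of $(L, 0)$ is pinned down by its action on generators. Hence $\ker(h \circ e)$ lies inside the kernel of rationalisation $\pi_0 \aut_*^+(M) \to \pi_0 \aut_*^+(M_\QQ)$. I combine this with an obstruction-theoretic analysis: for $[f]$ in the kernel, Hurewicz and $N \simeq \bigvee^n S^d$ identify $\phi|_N$ with the inclusion $i$ up to homotopy, so $\phi = e(f)$ is determined by its extension over the top cell, which is a torsor under $\pi_{2d}(M)$ modulo the action of $\pi_{d+1}(M)^n$ from homotopies of $i$, with the orientation-preserving condition cutting this down to $\ker(H \colon \pi_{2d}(M) \to \HH_{2d}(M))$. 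Serre's theorem makes $\pi_{2d}(M)$ finitely generated, and the rational vanishing is immediate for $d \geq 3$ (no integer $r$ satisfies $r(d-1) = 2d-1$, so $\pi_{2d}(M) \tensor \QQ = 0$); for $d = 2$ one must additionally use the $\pi_3(M)^n$-action to trivialise the rank of $L(3)$ rationally. In either case the kernel is a finitely generated torsion abelian group, hence finite.

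Finally, $\Aut(\ZZ^n, q, \beta)$ sits inside $\Aut(\ZZ^n, q) \cong \Aut_{alg}(\HH^*(M;\ZZ))$ as the stabiliser of a point in the finite abelian group $K^n$, hence has finite index, giving the commensurability assertion. \textbf{The hard part} will be the finiteness of the kernel, where the obstruction-theoretic description must be married to the rational vanishing; the case $d = 2$ is genuinely more delicate than the weight count that suffices for $d \geq 3$.
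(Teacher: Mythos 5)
Your identification of the image conditions $\lambda^tq\lambda=q$, $\lambda^t\beta=\beta$ via the Hilton decomposition of $Q$, the realization of a given $\lambda$ on $\bigvee^n S^d$ followed by the homotopy extension property to rigidify on $\partial N$, and the finite-index observation for $\Aut(\ZZ^n,q,\beta)\subseteq\Aut(\ZZ^n,q)$ all match the paper's computation. But the opening reduction contains a genuine logical gap: surjectivity of $e\colon\pi_0\aut_{\partial N}(N)\to\pi_0\aut_*^+(M)$ does \emph{not} follow formally from surjectivity of the composite $h\circ e$. Given $\phi\in\aut_*^+(M)$, your construction produces $f\in\aut_{\partial N}(N)$ with $h(e(f))=h(\phi)$, so $\phi\cdot e(f)^{-1}\in\ker h$; you still must show that every element of $\ker h$ --- i.e.\ every ``rotation of the top cell'' of $M$ by an element of $\pi_{2d}(M)$ --- lifts along $e$. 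This is exactly the surjectivity of $j_*\colon\pi_1(\Map_*(\partial N,N),i)\to\pi_1(\Map_*(\partial N,M),ji)$, which the paper proves by identifying the homotopy fiber $F$ of $j\colon N\to M$ and showing $\ZZ\cong\pi_{2d-1}(F)\to\pi_{2d-1}(N)$ is injective (sending a generator to $[i]=Q$), then applying the five lemma. This is the most delicate step of the paper's argument and is absent from your proposal.

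A second, related gap: your obstruction-theoretic analysis bounds $\ker h$ (the based self-equivalences of $M$ acting trivially on cohomology are parametrized, up to indeterminacy, by a finite quotient of $\pi_{2d}(M)$ when $d\geq 3$), but it says nothing about the fibers of $e$. Since $\ker(h\circ e)=e^{-1}(\ker h)$, finiteness of $\ker(h\circ e)$ requires in addition that $\ker e$ be finite, which is not addressed. The paper gets this from the exact sequence of the fibration $\Map_{\partial N}(N,N)\to\Map_*(N,N)\to\Map_*(\partial N,N)$: the fibers of $\pi_0(e)$ over its image are quotients of $K=\pi_1(\Map_*(\partial N,N),i)\cong\pi_{2d}(\bigvee^nS^d)$, finite for $d\geq 3$ by Hilton's theorem and finiteness of homotopy groups of spheres. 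Your rational-homotopy injectivity claim for $\pi_0\aut_*(M_\QQ)\to\GL(\HH^d(M;\QQ))$ is an interesting alternative route to controlling $\ker h$, but it cannot substitute for either of the two missing integral inputs above. (You are right that $d=2$ is problematic; in fact the paper's own finiteness of $K$ and $L$ also fails there, and the theorem is only used for $d>2$.)
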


\begin{proof}
Consider the map $e\colon \Map_{\partial N}(N,N)\rightarrow \Map_*(M,M)$ which extends a self-map of $N$ by the identity on $D$. By construction $e(f)$ restricts to $f$ on $N$ so we get a commutative diagram of based spaces
\begin{equation} \label{eq:fiberseq}
\xymatrix{\Map_{\partial N}(N,N) \ar@{^{(}->}[r] \ar[d]^-e & \Map_*(N,N) \ar[r]^-{i^*} \ar[d]^{j_*} & \Map_*(\partial N,N) \ar[d]^-{j_*} \\ \Map_*(M,M) \ar[r]^-{j^*} & \Map_*(N,M) \ar[r]^-{i^*} & \Map_*(\partial N,M)}
\end{equation}
where the base-point in $\Map_{\partial N}(N,N)$ is the identity map. The upper row is a homotopy fiber sequence because $i\colon \partial N \rightarrow N$ is a cofibration and $\Map_{\partial N}(N,N)$ is the fiber over $i\in \Map_*(\partial N,N)$. The bottom row is a homotopy fiber sequence because it is induced by the homotopy cofiber sequence $\partial N \stackrel{i}{\rightarrow} N \stackrel{j}{\rightarrow} M$. The diagram \eqref{eq:fiberseq} gives rise to a commutative diagram with exact rows
\begin{equation} \label{eq:diag}
\xymatrix{
\pi_1(\Map_*(\partial N,N),i) \ar[r]^-\partial \ar[d]^-{j_*} & [N,N]_{\partial N} \ar[r] \ar[d]^-e & [N,N]_* \ar[r]^-{i^*} \ar[d]^-{j_*} & [\partial N,N]_* \ar[d]^-{j_*} \\
\pi_1(\Map_*(\partial N,M),ji) \ar[r]^-\partial & [M,M]_* \ar[r]^-{j^*} & [N,M]_* \ar[r]^-{i^*} & [\partial N,M]_*.
}
\end{equation}

The manifold $N$ is homotopy equivalent to $\vee^n S^d$, so the monoid $[N,N]_*$ may be identified with the monoid $M_{n\times n}(\ZZ)$ of $n\times n$ integer matrices. Indeed, a self-map $f$ of $\vee^n S^d$ gives an endomorphism $f_*$ of the abelian group $\pi_d(\vee^n S^d)$. The group $\pi_d(\vee^n S^d)$ is free abelian on $\iota_1,\ldots,\iota_n$, where $\iota_i$ is the homotopy class of the inclusion of the $i^{th}$ wedge summand $S^d\rightarrow \vee^n S^d$, so $f$ determines an $n\times n$ integer matrix $(\lambda_{ij})$ where
$$f_*(\iota_i) = \sum_{i,j} \lambda_{ij}\iota_j \in \pi_d(\vee^n S^d).$$
The map $[f] \mapsto (\lambda_{ij})$ gives an isomorphism of monoids $[\vee^n S^d,\vee^n S^d]_* \cong M_{n\times n}(\ZZ)$.

The boundary $\partial N$ is diffeomorphic to $S^{2d-1}$, so we may identify $[\partial N,N]_* \cong \pi_{2d-1}(\vee^n S^d)$. The map $\pi_0(i^*) \colon [N,N]_*\rightarrow [\partial N,N]_*$ is given by $[f]\mapsto i^*[f] = [f\circ i] = f_*[i]$, so by exactness of \eqref{eq:diag} the image of $[N,N]_{\partial N}$ in $[N,N]_*$ is the submonoid of all $[f]\in [N,N]_*$ such that $f_*[i] = [i]$. The inclusion $i\colon \partial N \rightarrow N$ is equivalent to the attaching map $Q\colon S^{2d-1} \rightarrow \vee^n S^d$ for the top cell of $M$ given by \eqref{eq:attaching map}. Now, for a map $f\colon \vee^n S^d\rightarrow \vee^n S^d$, we have
\begin{align*}
f_*(Q) & = \frac{1}{2}\sum_{i,j} q_{ij}[f_*(\iota_i),f_*(\iota_j)] + \sum_i f_*(\iota_i) \circ \beta_i \\
& = \frac{1}{2}\sum_{i,j,k,\ell} q_{ij}[\lambda_{ik} \iota_k,\lambda_{i\ell}\iota_{\ell}] + \sum_{i,j} \lambda_{ij}\iota_j \circ \beta_i \\
& = \frac{1}{2}\sum_{k,\ell} (\lambda^t q \lambda)_{k\ell} [\iota_k,\iota_{\ell}] + \sum_{j} \iota_j \circ (\lambda^t \beta)_j.
\end{align*}

Since the expression is unique, the above calculation shows that $f_*(Q) = Q$ if and only if
\begin{equation} \label{eq:condition}
\lambda^t q \lambda = q,\quad \lambda^t \beta = \beta.
\end{equation}

Therefore, the image of $[N,N]_{\partial N}$ in $[N,N]_*\cong M_{n\times n}(\ZZ)$ may be identified with the submonoid of all integer $n\times n$-matrices $\lambda$ such that \eqref{eq:condition} holds.

The map $j_*\colon [N,N]_* \rightarrow [N,M]_*$ is an isomorphism. The map $j_*\colon [\partial N,N]_* \rightarrow [\partial N,M]_*$ may be identified with the projection $\pi_{2d-1}(\vee^n S^d) \rightarrow \pi_{2d-1}(\vee^n S^d)/(Q)$. For this reason, the kernel of $i^*\colon [N,M]_* \rightarrow [\partial N,M]_*$ may be identified with the monoid of based homotopy classes of maps $f\colon \vee^n S^d\rightarrow \vee^n S^d$ such that $f_*(Q) = \ell Q \in \pi_{2d-1}(\vee^n S^d)$ for some $\ell \in \ZZ$. The condition $f_*(Q) = \ell Q$ is equivalent to
\begin{equation} \label{eq:condition2}
\lambda^t q \lambda = \ell q, \quad \lambda^t \beta = \ell \beta.
\end{equation}
Thus, the image of $[M,M]_*$ in $[N,M]_*$ is the submonoid of all $\lambda\in M_{n\times n}(\ZZ)$ such that \eqref{eq:condition2} is fulfilled. If $\lambda$ is invertible then necessarily $\ell = \pm 1$ and $\lambda$ comes from an orientation preserving homotopy self-equivalence of $M$ precisely when $\ell = 1$.

Passing to the submonoids of homotopy self-equivalences, the above shows that there is a commutative diagram of group extensions
$$
\xymatrix{1 \ar[r] & K \ar[r] \ar[d]^-{j_*} & \pi_0\aut_{\partial N}(N) \ar[r] \ar[d]^-{\pi_0(e)} & \Aut(\ZZ^n,q,\beta) \ar[r] \ar@{=}[d] & 1 \\
1 \ar[r] & L \ar[r] & \pi_0\aut_*^+(M) \ar[r] & \Aut(\ZZ^n,q,\beta) \ar[r] & 1}
$$
where the groups $K=\pi_1(\Map_*(\partial N,N),i)$ and $L=\pi_1(\Map_*(\partial N,M),ji)$ are finite. The map $j_*\colon K \rightarrow L$ is surjective. Indeed, let $F$ be the homotopy fiber of $j\colon N\rightarrow M$. The exact sequence derived from the homotopy fiber sequence $\Map_*(\partial N,F) \rightarrow \Map_*(\partial N,N) \rightarrow \Map_*(\partial N,M)$ looks like
$$\cdots \rightarrow K \stackrel{j_*}{\rightarrow} L \stackrel{\partial}{\rightarrow} \pi_{2d-1}(F) \stackrel{g}{\rightarrow} \pi_{2d-1}(N) \rightarrow \pi_{2d-1}(M).$$
The map $g$ is injective since it can be identified with $\ZZ \cong \pi_{2d-1}(F) \rightarrow \pi_{2d-1}(N)$ sending a generator to $[i]$. This implies that $j_*$ is surjective. By the five lemma, it follows that $\pi_0(e)$ is surjective.
\end{proof}

\section{Surgery theory and block diffeomorphisms}
In this section $M = M_g = \#^g(S^d\times S^d)$ and $N = M \setminus \interior D$ with $\partial N = S^{2d-1}$. The main objective of the section is the study of the rational homology of the pair
$$(\tAut_{\partial N}(N),\tDiff_{\partial N}(N)).$$
We remember that $\tAut_{\partial N}(N)$ is the topological monoid of block homotopy self-equivalences of $N$ that fix $\partial N$ pointwise, and that $\tDiff_{\partial N}(N)$ is the subgroup of block diffeomorphisms. Our tools are two homotopy fibrations\footnote{A homotopy fibration $F\rightarrow E\stackrel{\pi}{\rightarrow} B$ is a map $\pi$ whose homotopy fiber at $b\in B$ is homotopy equivalent to $F$.}:
\begin{equation} \label{eq:fibration1}
\tAut_{\partial N}(N)/\tDiff_{\partial N}(N) \rightarrow B\tDiff_{\partial N}(N) \rightarrow B\tAut_{\partial N}(N),
\end{equation}
\begin{equation} \label{eq:fibration2}
\tAut_{\partial N}(N)/\tDiff_{\partial N}(N) \stackrel{\eta}{\rightarrow} \Map_*(M,G/O) \stackrel{\lambda}{\rightarrow} \Lsp(M).
\end{equation}
The first fibration defines the ``homogeneous'' space in the fiber. The second fibration from \cite{Q,W} is F. Quinn's reformulation of the main theorem of surgery theory: its homotopy exact sequence is the surgery exact sequence of Sullivan and Wall in positive degrees. In combination with the results of \S \ref{sec:2}, we then derive a stability theorem for $B\tDiff_D(M_g)$, namely that the Harer type stabilization map
$$B\tDiff_D(M_g^{2d}) \rightarrow B\tDiff_D(M_{g+1}^{2d}),\quad d\ne 2,$$
induces an isomorphism in rational homology in a range of dimensions depending on $g$ and $d$. As $g$ and $d$ tend to infinity the range tends to infinity.

\subsection{The surgery exact sequence} \label{sec:surgery}
For an oriented compact manifold $X$ with boundary $\partial X$, the structure set $\Ss^{G/O}(X,\partial X)$ consists of pairs $(M,f)$ of a smooth manifold $M$ and a simple homotopy equivalence $f\colon (M,\partial M)\rightarrow (X,\partial X)$ with $\partial f \colon \partial M\rightarrow \partial X$ a diffeomorphism. Two pairs $(M_0,f_0)$ and $(M_1,f_1)$ define the same element of $\Ss^{G/O}(X,\partial X)$ if there is a diffeomorphism $\varphi\colon M_0\rightarrow M_1$ with $f_1\circ \varphi$ homotopic to $f_0$ rel. boundary.

The surgery exact sequence is a method to enumerate the structure set. We give a short review of it, referring to \cite{B,W} for full details.

A normal map with target $X$ is a pair $(f,\hat{f})$ consisting of a degree one map of manifolds
$$f\colon (M,\partial M) \rightarrow (X,\partial X),$$
with $\partial f$ a diffeomorphism, together with a fiberwise isomorphism of vector bundles
$$\hat{f} \colon \nu_M\rightarrow \zeta$$
over $f$. Here $\nu_M$ is the normal bundle of an embedding of $(M,\partial M)$ into a disk $(D^N,S^{N-1})$ with $N$ large compared to the dimension of $M$, and $\zeta$ is a vector bundle over $X$.

A normal cobordism between two normal maps $(f_0,\hat{f}_0)$ and $(f_1,\hat{f}_1)$ with target $X$ is a normal map $(F,\hat{F})$ where
$$F\colon W^{n+1}\rightarrow X\times I$$
is a degree one map from a relative cobordism between $(M_0,f_0)$ and $(M_1,f_1)$:
\begin{enumerate}
\item[(i)] $\partial W = M_0^n\cup V^n \cup M_1^n,\quad \partial V = \partial M_0\sqcup \partial M_1$.
\item[(ii)] $F\colon V\rightarrow \partial X \times I$ is a diffeomorphism.
\item[(iii)] $F\mid_{(M_0,\partial M_0)} = f_0$, $F\mid_{(M_1,\partial M_1)} = f_1$.
\end{enumerate}
The bundle data $\hat{F}$ is a fiberwise isomorphism of vector bundles over $F$,
$$\hat{F}\colon \nu_W \rightarrow \zeta\times I$$
with
\begin{enumerate}
\item[(iv)] $\hat{F}\mid_{\nu_{M_0}} = \hat{f}_0,\quad \hat{F}\mid_{\nu_{M_1}} = \hat{f}_1$.
\end{enumerate}
There is a map, the normal invariant, from the structure set to the normal cobordism classes of normal maps,
$$\eta \colon \Ss^{G/O}(X,\partial X)\rightarrow \mathcal{N}^{G/O}(X,\partial X).$$
It is defined as follows. Let $f\colon (M,\partial M)\rightarrow (X,\partial X)$ be a homotopy equivalence representing an element of the structure set, and let $\nu_M$ be its stable normal bundle as above. Pick a homotopy inverse $g\colon (X,\partial X)\rightarrow (M,\partial M)$ to $f$ with $\partial g = (\partial f)^{-1}$, define $\zeta = g^*(\nu_M)$ and note that $f^*(\zeta)$ is identified with $\nu_M$ since $g\circ f \simeq id_M$. The bundle map $\hat{f}\colon \nu_M\rightarrow \zeta$ is the resulting fiberwise isomorphism over $f$. The normal cobordism class of $(f,\hat{f})$ is independent of the choices. This defines the normal invariant $\eta(f)$.

The process of surgery measures to which extent $\eta$ is a bijection. The result is an exact sequence for $k+n>5$,
\begin{equation} \label{eq:(18)}
\Ss^{G/O}(D^k\times X,\partial) \stackrel{\eta}{\rightarrow} \mathcal{N}^{G/O}(D^k\times X,\partial) \stackrel{\lambda}{\rightarrow} L_{n+k}(\ZZ[\pi_1 X]) \stackrel{\alpha}{\rightarrow} \Ss^{G/O}(D^{k-1}\times X,\partial).
\end{equation}
Here $n = \dim X$, and $L_*(\ZZ[\pi_1 X])$ are Wall's $L$-groups. They are graded abelian groups that classify quadratic forms on (free) modules over the group ring $\ZZ[\pi_1 X]$ when $*$ is even and stable automorphisms of such forms when $*$ is odd. For $k>1$, the terms in \eqref{eq:(18)} are abelian groups and the maps are homomorphisms. For $k=1$, $\alpha$ is an action of $L_{n+1}(\ZZ[\pi_1 X])$ on $\Ss^{G/O}(X,\partial X)$. The set of orbits of this action is in bijection with the subset of the normal invariants that map to zero under $\lambda$.

In our applications $X$ is simply connected, and
\begin{equation} \label{eq:(19)}
L_n(\ZZ) = \left\{ \begin{array}{ll} \ZZ, & n \equiv 0 \, (4) \\ \ZZ/2, & n \equiv 2 \, (4) \\ 0, & \mbox{$n$ odd.} \end{array} \right.
\end{equation}

We next recall the important reformulation of the set of normal invariants due to Sullivan. Let $G(n)$ be the topological monoid of self homotopy equivalences of the sphere $S^{n-1}$ in the compact-open topology. It contains the orthogonal group $O(n)$. The associated homogeneous space $G(n)/O(n)$ is defined to be the homotopy theoretic fiber of the map on classifying spaces
$$\pi\colon BO(n)\rightarrow BG(n).$$
Let $*\in BG(n)$ be a basepoint. Then $G(n)/O(n)$ consists of pairs $(x,\lambda(t))$ with $x\in BO(n)$ and $\lambda(t)$ a path in $BG(n)$ subject to the conditions $\lambda(0) = \pi(x)$, $\lambda(1) = *$. Hence a map from a space $Y$ to $G(n)/O(n)$ is equivalent to a commutative diagram
$$
\xymatrix{Y \ar[r]^-f \ar[d]^-i & BO(n) \ar[d]^-\pi \\ CY \ar[r]^-g & BG(n)}
$$
where $CY$ is the cone on $Y$ and $g$ maps the cone point to $*$. Since $\pi$ maps an orthogonal vector bundle to its sphere bundle, considered as a spherical fiber space, the above diagram gives rise to an orthogonal vector bundle over $Y$ whose spherical fiber space extends over the cone, hence is trivial. In other words, $G(n)/O(n)$ classifies pairs $(\xi,t)$ of an orthogonal vector bundle $\xi$ and a fiberwise homotopy equivalence $t\colon S(\xi) \rightarrow S^{n-1}\times Y$.

There are inclusions $(G(n),O(n))\subset (G(n+1),O(n+1))$ and $G/O$ is the colimit of $G(n)/O(n)$ as $n\rightarrow \infty$. By the above, $G/O$ classifies triples $(\xi,\eta,t)$ of two stable vector bundles $\xi$ and $\eta$ over $Y$ and a homotopy equivalence $t\colon S(\xi) \rightarrow S(\eta)$. Hence the set of homotopy classes from a finite CW complex to $G/O$ can be identified with the set of isomorphism classes of triples $(\xi,\eta,t)$.

The space $G/O$ admits a multiplication, it is even an infinite loop space, so $[Y,G/O]$ is an abelian group. This structure corresponds to the following addition of triples:
$$(\xi_1,\eta_1,t_1)\oplus (\xi_2,\eta_2,t_2) = (\xi_1\oplus \xi_2,\eta_1\oplus \eta_2,t_1*t_2).$$

\begin{theorem}[Sullivan] \label{thm:3.1}
There is an isomorphism
$$\sigma\colon \mathcal{N}^{G/O}(X,\partial X) \stackrel{\cong}{\rightarrow} [X/\partial X,G/O].$$
\end{theorem}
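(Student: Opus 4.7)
The strategy is classical: construct $\sigma$ via the Pontryagin-Thom collapse, and construct an inverse via transversality. In the triple parametrization $(\xi, \eta, t)$ of $G/O$-maps recalled above, a map $X/\partial X \to G/O$ is the data of two stable vector bundles $\xi,\eta$ over $X$ trivialized on $\partial X$ together with a fiber homotopy equivalence $t\colon S(\xi) \to S(\eta)$ that is the identity on $\partial X$. The plan is to assign to a normal invariant $(f, \hat f\colon \nu_M \to \zeta)$ the triple $(\nu_X, \zeta, t)$, where $t$ compares the two stable vector bundle reductions of the Spivak normal fibration of the Poincar\'e pair $(X, \partial X)$ provided by $\nu_X$ and by $\zeta$ respectively.

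Concretely, I would choose compatible embeddings of $M$ and $X$ in a large disk $(D^{N+1}, S^N)$ realizing $\nu_M$ and some normal bundle $\nu_X$ of $X$, arranged so that $\partial f$ is induced by the identity on boundary embeddings and $\hat f|_{\partial M}$ is the identity on boundary normal bundles under the collar identification. The Pontryagin-Thom construction then yields two degree-one relative collapse maps
\[
c_X \colon D^{N+1}/S^N \longrightarrow \Th(\nu_X)/\Th(\nu_X|_{\partial X}), \qquad \bar c := \Th(\hat f) \circ c_M \colon D^{N+1}/S^N \longrightarrow \Th(\zeta)/\Th(\zeta|_{\partial X}).
\]
By Spivak's uniqueness theorem for the normal fibration of $(X, \partial X)$, the existence of these two degree-one collapses exhibits $S(\nu_X)$ and $S(\zeta)$ as two stable vector bundle reductions of one and the same spherical fibration, and produces a canonical (up to contractible choice) fiber homotopy equivalence $t\colon S(\nu_X) \to S(\zeta)$ which is the identity over $\partial X$. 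Define $\sigma(f, \hat f)$ to be the homotopy class classifying the triple $(\nu_X, \zeta, t)$.

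For the inverse, start with a map $X/\partial X \to G/O$ represented by a triple $(\xi,\eta,t)$. After stabilization one may assume $\xi = \nu_X$, so that $t\colon S(\nu_X) \to S(\eta)$ is trivialized on $\partial X$. Combining $t$ with the collapse $c_X$ produces a degree-one relative map $\bar c \colon D^{N+1}/S^N \to \Th(\eta)/\Th(\eta|_{\partial X})$. Choose a smooth representative of $\bar c$ transverse to the zero section of $\eta$; its preimage is a compact manifold $M$ whose boundary maps diffeomorphically to $\partial X$ (thanks to the trivialization), and $\bar c$ restricts to a bundle map $\hat f\colon \nu_M \to \eta$ covering a degree-one map $f\colon (M, \partial M) \to (X, \partial X)$, i.e.\ a normal map.

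The main obstacle is checking that these two constructions descend to mutually inverse bijections on the respective equivalence classes. Well-definedness under normal cobordism is handled by applying the same relative Pontryagin-Thom construction to a cobordism $(W, F, \hat F)$ over $X \times I$, producing a triple over $(X/\partial X) \times I$ and hence a homotopy between the resulting $G/O$-maps; conversely, a homotopy of $G/O$-maps yields a normal cobordism via transversality over $X \times I$. That the two compositions are the identity is then the standard Pontryagin-Thom correspondence between smooth transverse preimages and collapse maps up to bordism. A detailed treatment along these lines is given in the books of Wall and Browder cited above.
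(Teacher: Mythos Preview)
Your proposal is correct and follows essentially the same route as the paper: construct $\sigma$ from a normal map by forming two degree-one Pontryagin--Thom collapses into $\Th(\nu_X)$ and $\Th(\zeta)$, invoke uniqueness of the Spivak normal fibration to obtain the fiber homotopy equivalence $\dot{\nu}_X \to \dot{\zeta}$ (trivialized on $\partial X$), and define the inverse by transversality. The paper's sketch is terser, omitting the well-definedness and mutual-inverse checks you outline, but the underlying construction is the same.
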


\begin{proof}[Proof (Sketch)]
Consider a normal map
$$
\xymatrix{\nu_M \ar[d] \ar[r]^-{\hat{f}} & \zeta \ar[d] \\ M \ar[r]^-f & X}
$$
with $\nu_M$ the normal bundle of an embedding $(M^n,\partial M^n) \subset (D^{n+k},\partial D^{n+k})$ for some large $k$. View $(\nu_M,\nu_M |_\partial)$ as (relative) open tube around $(M,\partial M)$ and let
$$c_M\colon (D^{n+k},\partial D^{n+k}) \rightarrow (\Th(\nu_M),\Th(\nu_M|_{\partial M}))$$
be the associated Pontryagin-Thom collapse map into the one-point compactifications. Here the Thom space $\Th(\nu_M)$ is the one-point compactification of the total space. The collapse map $c_M$ has degree one, and we can compose with
$$(\Th(\hat{f}),\Th(\hat{f}|_\partial))\colon (\Th(\nu_M),\Th(\nu_M|_\partial)) \rightarrow (\Th(\zeta),\Th(\zeta|_{\partial X}))$$
to obtain a degree one map
$$c_M\colon (D^{n+k},\partial D^{n+k})\rightarrow (\Th(\zeta),\Th(\zeta|_{\partial X})).$$
The Pontryagin-Thom collapse map associated with an embedding $(X,\partial X)\subset (D^{n+k},\partial D^{n+k})$ yields another degree one map
$$c_X\colon (D^{n+k},\partial D^{n+k})\rightarrow (\Th(\nu_X),\Th(\nu_X|_{\partial X})).$$
The uniqueness theorem for Spivak normal fibrations \cite[Theorem I.4.19]{B} gives a fiber homotopy equivalence
$$c(f,\hat{f})\colon \dot{\nu}_X \rightarrow \dot{\zeta}$$
where $\dot{\nu}_X$ and $\dot{\zeta}$ denote the fiberwise one point compactifications, $\dot{\nu}_X = S(\nu_X\oplus \RR)$ etc. The restriction of $c(f,\hat{f})$ to $\partial X$ is the fiberwise one point compactification of a vector bundle map. Hence the triple defines the element
$$\sigma(f,\hat{f}) \in [X/\partial X, G/O]_*$$
in the based homotopy set, which in turn is equal to the unbased one since $G/O$ is simply connected. The inverse to $\sigma$ is defined by transversality.
\end{proof}

\begin{remark} \label{rmk:3.2}
There is a completely analogous theory where smooth manifolds are replaced by topological (or piece-wise linear) manifolds. The term $L_*(\ZZ[\pi_1 X])$ in \eqref{eq:(18)} is unchanged, but the normal invariant term changes from $[X/\partial X,G/O]$ to $[X/\partial X,G/\Top]$. Here $\Top$ is the union or colimit of $\Top(n)$, the group of homeomorphisms of $\RR^n$. Surgery theory in the topological category is calculational easier to handle because of the following results due to Sullivan:
\begin{align*}
[Y,G/\Top] \tensor \ZZ_{(2)} & \cong \bigoplus_{k\geq 1} \HH^{4k}(Y;\ZZ_{(2)}) \oplus \HH^{4k-2}(Y;\ZZ/2), \\
[Y,G/\Top] \tensor \ZZ[1/2] & \cong \widetilde{KO}^0(Y)\tensor \ZZ[1/2].
\end{align*}
Let us also recall another of Sullivan's results, namely that for each prime $p$
$$[Y,G/O]\tensor \ZZ_{(p)} \cong KSO^0(Y)\tensor \ZZ_{(p)} \oplus [Y, Cok J]\tensor \ZZ_{(p)}.$$
Here $\ZZ_{(p)}\subset \QQ$ is the subring of fractions with denominator prime to $p$. The homotopy groups of $Cok J$ is the largely unknown part of the stable homotopy groups of spheres that does not come from the homotopy groups of the infinite orthogonal group; $(Cok J)_\QQ \simeq *$.
\end{remark}

\subsection{Block automorphisms and Quinn's fibration}
For a smooth oriented compact manifold $X$,  $\Diff_{\partial X}(X)$ denotes the topological group of orientation preserving diffeomorphisms of $X$ (in the $C^\infty$-Whitney topology) that restrict to the identity on the boundary $\partial X$. There is an associated simplicial group, namely its singular complex, with $k$-simplices consisting of commutative diagrams
$$\xymatrix{\Delta^k\times X \ar[rr]^-\varphi \ar[dr]_-{pr_1} && \Delta^k\times X \ar[dl]^-{pr_1} \\ & \Delta^k}$$
with $\varphi$ a diffeomorphism which is the identity on $\Delta^k\times \partial X$. One may relax the condition on $\varphi$ and just assume that
\begin{equation} \label{eq:(20)}
\varphi\colon \Delta^k\times X \rightarrow \Delta^k \times X
\end{equation}
preserves the face structure of $\Delta^k$: for each face $\Delta^\ell \subset \Delta^k$, $\varphi$ maps $\Delta^\ell \times X$ into itself. Such a $\varphi$ is called a block diffeomorphism. The set of block diffeomorphisms form a $\Delta$-group or a pre-simplicial group, that is, a simplicial group without degeneracies. The general theory of $\Delta$-sets was developed in \cite{RS}. The geometric realization of a $\Delta$-set $S_\bullet$ is
$$||S_\bullet|| = \bigsqcup_k \Delta^k\times S_k/\sim,\quad (t,d_ix)\sim (d^it,x)$$
for $t\in \Delta^{k-1}$, $x\in S_k$. $\Delta$-groups are fibrant (satisfy a Kan condition). This implies that
\begin{equation} \label{eq:(36)}
\pi_k\tDiff_{\partial X}(X) = \pi_0\Diff_{\partial(D^k\times X)}(D^k\times X).
\end{equation}
We remarked above that the singular complex $\Sing_\bullet \Diff_{\partial X}(X)$ is a subgroup of $\tDiff_{\partial X}(X)$. In the rest of the paper we shall replace $\Diff_{\partial X}(X)$ by its singular complex without so indicating in the notation. (Alternatively one could pass to geometric realizations as the realization of $\Sing_\bullet\Diff_{\partial X}(X)$ is equivalent to $\Diff_{\partial X}(X)$ in the $C^\infty$-Whitney topology.) Note that the $1$-simplices of $\tDiff_{\partial X}(X)$ are diffeomorphisms of the cylinder $I\times X$ that preserve top and bottom and is the identity on $I\times \partial X$; the $1$-simplices of $\Diff_{\partial X}(X)$ further preserve the level $\{t\} \times X$ for each $t\in I$.

In \eqref{eq:(20)} we may use face preserving homotopy equivalences instead of diffeomorphisms to obtain a $\Delta$-monoid denoted $\tAut_{\partial X}(X)$. Up to homotopy there is no difference between fiberwise homotopy equivalences of fiber bundles and homotopy equivalences of their total spaces, cf. \cite[Theorem 6.1]{Dold}. This implies that the submonoid of fiberwise homotopy equivalences,
\begin{equation} \label{eq:(22)}
\xymatrix{\Delta^k\times X \ar[dr] \ar[rr]^-f && \Delta^k\times X \ar[ld] \\ & \Delta^k}
\end{equation}
is homotopy equivalent to $\tAut_{\partial X}(X)$. The submonoid defined by \eqref{eq:(22)} is the singular complex of $\aut_{\partial X}(X)$, the topological monoid of homotopy self-equivalences of $X$ with compact-open topology, so
$$\tAut_{\partial X}(X) \simeq \aut_{\partial X}(X),$$
in the category of $\Delta$-monoids. Again we will tacitly replace $\aut_{\partial X}(X)$ with its singular complex without change of notation. In contrast $\tDiff_{\partial X}(X)$ is very different from $\Diff_{\partial X}(X)$. Indeed, the homogeneous space $\tDiff_{\partial X}(X)/\Diff_{\partial X}(X)$ is closely related to Waldhausen's functor $A(X)$, as explained in the introduction.

The homotopy theoretic fiber of the obvious map
$\widetilde{J}\colon B\tDiff_{\partial X}(X) \rightarrow B\tAut_{\partial X}(X)$ is by definition the homogeneous space $\tAut_{\partial X}(X)/\tDiff_{\partial X}(X)$. Below we compare it with the structure space $\Ssp_{\partial X}^{G/O}(X)$ defined by F. Quinn in \cite{Q}. Its homotopy groups are
$$\pi_k \Ssp_{\partial X}^{G/O}(X) = \Ss_{\partial(D^k\times X)}^{G/O}(D^k\times X).$$
Moreover, Quinn constructs a homotopy fibration
\begin{equation} \label{eq:(23)}
\Ssp_{\partial X}^{G/O}(X) \rightarrow \Map_*(X/\partial X,G/O)\rightarrow \Lsp(X)
\end{equation}
whose homotopy exact sequence is the surgery exact sequence provided $\dim X \geq 5$. The space $\Lsp(X)$ has homotopy groups
$$\pi_i \Lsp(X) = L_{i+\dim X}(\ZZ[\pi_1 X]).$$
The structure space is defined simplicially. Its $k$-simplices are pairs $(W,f)$ where $W$ is a manifold with corners with $\dim W = k + \dim X$, and $f\colon W\rightarrow \Delta^k\times X$ is a simple homotopy equivalence which restricts to a diffeomorphism on the boundary faces ($W$ is a $(k+2)$-ad in the terminology of \cite{W}). There is an obvious map
$$s_X\colon \tAut_{\partial X}(X)/\tDiff_{\partial X}(X) \rightarrow \Ssp_{\partial X}^{G/O}(X).$$
It is a consequence of the $s$-cobordism theorem that $s_X$ defines a weak homotopy equivalence of the connected components of the identity. Indeed, the homotopy groups of the target are represented by diagrams
$$\xymatrix{W \ar[r]^-g & D^k\times X \\ \partial W \ar[r]^-{\partial g} \ar@{^{(}->}[u] & \partial(D^k\times X) \ar@{^{(}->}[u]}$$
with $g$ a simple homotopy equivalence and $\partial g$ a diffeomorphism. For $k=1$, $W$ is a manifold with
$$\partial W = \partial_0W \cup V \cup \partial_1 W,\quad \partial V = \partial(\partial_0 W) \sqcup \partial(\partial_1 W)$$
where $\partial_\nu W = (\partial g)^{-1}(\{\nu\}\times X)$ and $V = (\partial g)^{-1}(I\times \partial X)$. Thus $(W,V)$ is a relative $h$-cobordism and since $g$ is assumed to be a simple homotopy equivalence $W$ is a relative $s$-cobordism. This implies by the $s$-cobordism theorem a diffeomorphism
$$(W,V,\partial_0 W,\partial_1 W) \cong (I\times X,I\times \partial X, 0\times X, 1\times X).$$
For $k>1$, write $D^k = I\times D^{k-1}$ and use the above with $X$ replaced by $D^{k-1}\times X$. This shows that
$$s_X \colon \pi_k(\tAut_{\partial X}(X)/\tDiff_{\partial X}(X), 1_X) \rightarrow \pi_k(\Ssp_{\partial X}^{G/O}(X),1_X)$$
is surjective. It is clearly injective and hence an isomorphism, so that $s_X$ defines a weak homotopy equivalence of components of the identity
\begin{equation} \label{eq:(24)}
s_X\colon \big[ \tAut_{\partial X}(X)/\tDiff_{\partial X}(X) \big]_{(1)} \stackrel{\sim}{\rightarrow} \Ssp_{\partial X}^{G/O}(X)_{(1)}.
\end{equation}

In our applications $X = N =  M_g^{2d} \setminus \interior D^{2d}$ is simply connected, and since $L_{2k+1}(\ZZ) = 0$ we can reformulate $\eqref{eq:(23)}$ to the homotopy fibration
\begin{equation} \label{eq:(25)}
\big[ \tAut_{\partial N}(N)/\tDiff_{\partial N}(N) \big]_{(1)} \rightarrow \Map_*(M_g,G/O)_{(1)} \rightarrow \Lsp(M_g)_{(1)}.
\end{equation}

The lemma below is needed for calculations in the next section. Let
$$f\colon X_1\rightarrow X_2,\quad g\colon X_2\rightarrow X_3$$
be simple homotopy equivalences of smooth manifolds with $\partial f$ and $\partial g$ diffeomorphisms. They define elements of $\Ss^{G/O}(X_2,\partial X_2)$ and $\Ss^{G/O}(X_3,\partial X_3)$, respectively.

\begin{lemma} \label{lemma:norminv}
In $[X_3/\partial X_3,G/O]$ we have the formula
$$\eta(g\circ f) = (g^*)^{-1}(\eta(f)) + \eta(g).$$
\end{lemma}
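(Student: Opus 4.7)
The plan is to unpack each normal invariant via Sullivan's identification (Theorem \ref{thm:3.1}) with a single coherent choice of auxiliary data, pull everything back to $[X_2/\partial X_2,G/O]$, and compute directly.

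First I would fix homotopy inverses $\bar f\colon X_2\to X_1$ and $\bar g\colon X_3\to X_2$ with $\partial \bar f=(\partial f)^{-1}$ and $\partial \bar g=(\partial g)^{-1}$, and take $\overline{gf}:=\bar f\bar g$ as homotopy inverse of $gf$. This makes the bundles compatible: with $\zeta_f:=\bar f^\ast\nu_{X_1}$, $\zeta_g:=\bar g^\ast\nu_{X_2}$, we have $\zeta_{gf}=(\bar f\bar g)^\ast\nu_{X_1}=\bar g^\ast\zeta_f$. Together with the fiberwise equivalences $t_f,t_g,t_{gf}$ produced by the Spivak uniqueness theorem (as in the proof of Theorem \ref{thm:3.1}), the triples $(\zeta_f,\nu_{X_2},t_f)$, $(\zeta_g,\nu_{X_3},t_g)$, $(\zeta_{gf},\nu_{X_3},t_{gf})$ represent $\eta(f)$, $\eta(g)$, $\eta(gf)$ under the Sullivan isomorphism.

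Next I would pull back by the map $X_2/\partial X_2\to X_3/\partial X_3$ induced by $g$ (well defined because $\partial g$ is a diffeomorphism). From $\bar g\circ g\simeq \mathrm{id}_{X_2}$ rel.\ $\partial X_2$ one gets natural identifications $g^\ast\zeta_{gf}=g^\ast\bar g^\ast\zeta_f\cong\zeta_f$ and $g^\ast\zeta_g=g^\ast\bar g^\ast\nu_{X_2}\cong\nu_{X_2}$. Thus $g^\ast\eta(gf)$ is represented on $X_2$ by $(\zeta_f,g^\ast\nu_{X_3},g^\ast t_{gf})$ and $g^\ast\eta(g)$ by $(\nu_{X_2},g^\ast\nu_{X_3},g^\ast t_g)$. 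At the level of virtual bundles this already gives the expected
\[ g^\ast\eta(gf)=\zeta_f-g^\ast\nu_{X_3}=(\zeta_f-\nu_{X_2})+(\nu_{X_2}-g^\ast\nu_{X_3})=\eta(f)+g^\ast\eta(g). \]

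To upgrade this to an equality in $[X_2/\partial X_2,G/O]$ the fiberwise equivalences must also match. Since addition in $G/O$ is Whitney sum of triples, adding the zero triple $(\nu_{X_2},\nu_{X_2},\mathrm{id})$ and cancelling $\nu_{X_2}$ shows that composition of fiberwise equivalences represents the sum; so the whole claim reduces to the naturality identity $g^\ast t_{gf}\simeq t_f\circ g^\ast t_g$ of maps $S(g^\ast\nu_{X_3})\to S(\zeta_f)$. I expect this to be the main obstacle: it is a functoriality property of the Spivak uniqueness isomorphism (\cite[Theorem I.4.19]{B}) under composition of homotopy equivalences, which holds by construction for the coherent choice $\overline{gf}=\bar f\bar g$ but requires opening up the proof of the uniqueness theorem to verify at the level of homotopy classes. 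Once this is granted, applying the inverse $(g^\ast)^{-1}=\bar g^\ast$ (which exists since $g$ is a homotopy equivalence) yields the stated formula.
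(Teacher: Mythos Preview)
Your reduction is correct and matches the paper's strategy: both fix homotopy inverses $\bar f,\bar g$ with $\overline{gf}=\bar f\bar g$ and reduce the formula to a composition identity for the Spivak comparison equivalences, which in the paper's notation reads $c(g\circ f)\simeq \bar g^{*}(c(f))\circ c(g)$ as maps $\dot\nu_{3}\to\bar g^{*}\bar f^{*}\dot\nu_{1}$ (pulling back along $g$ gives exactly your $g^{*}t_{gf}\simeq t_{f}\circ g^{*}t_{g}$). Your remark that Whitney sum of triples becomes composition of the fiberwise equivalences after cancelling a common summand is also fine.

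The difference is that you stop precisely where the content is: you flag the naturality of the Spivak uniqueness map as ``the main obstacle'' and defer it to an unproved functoriality statement, whereas the paper's proof consists almost entirely of establishing that very identity. It does so not by abstract functoriality but by returning to the Pontryagin--Thom collapse maps $\alpha_{i}\colon S^{n+k}\to\Th^{rel}(\nu_{i})$ from which the maps $c(\,\cdot\,)$ are constructed, and chasing a homotopy-commutative diagram of Thom spaces whose squares encode $c(f)\circ\alpha_{2}\simeq f_{*}\circ\alpha_{1}$, $c(g)\circ\alpha_{3}\simeq g_{*}\circ\alpha_{2}$, and $g_{*}\circ c(f)\simeq \bar g^{*}(c(f))\circ g_{*}$; the outer rectangle then yields $\bar g^{*}(c(f))\circ c(g)\simeq c(g\circ f)$. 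So your proposal is not wrong but incomplete: the step you postpone \emph{is} the lemma, and ``opening up the proof of the uniqueness theorem'' amounts to carrying out the paper's Thom-collapse diagram chase.
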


\begin{proof}
We use the description of the normal invariant from \S \ref{sec:surgery}. Let $\nu_i$ be the stable normal bundle of $X_i$ and let
$$\alpha_i \colon S^{n+k} \rightarrow \Th(\nu_i)/\Th(\nu_i\mid_\partial) =: \Th^{rel}(\nu_i)$$
be the Pontryagin-Thom collapse map, where $n = \dim X_i$ and $k = \dim \nu_i \gg n$. Let $\overline{f}\colon X_2\rightarrow X_1$ and $\overline{g}\colon X_3\rightarrow X_2$ be homotopy inverses to $f$ and $g$, and let
$$c(f)\colon \dot{\nu}_2 \rightarrow \overline{f}^*(\dot{\nu}_1)\quad c(g)\colon \dot{\nu}_3\rightarrow \overline{g}^*(\dot{\nu}_2)$$
be the fiberwise one-point compactifications, which define $\eta(f)$ and $\eta(g)$. The diagram
$$
\xymatrix{S^{n+k} \ar[d]^-{\alpha_1} \ar[r]^-{\alpha_2} & \Th^{rel}(\nu_2) \ar[d]^-{c(f)} \\ \Th^{rel}(\nu_1) \ar[r]^-{f_*} & \Th^{rel}(\overline{f}^*(\nu_1))}
$$
is homotopy commutative, and similarly for $c(g)$. Now consider the homotopy commutative diagram
$$
\xymatrix{& S^{n+k} \ar[d]^-{\alpha_2} \ar[r]^-{\alpha_1} & \Th^{rel}(\nu_1) \ar[d]^-{f_*} \\
S^{n+k} \ar@{=}[ur] \ar[d]^-{\alpha_3} \ar[r]^-{\alpha_2} & \Th^{rel}(\nu_2) \ar[d]^-{g_*} \ar[r]^-{c(f)} & \Th^{rel}(\overline{f}^*(\nu_1)) \ar[d]^-{g_*} \\
\Th^{rel}(\nu_3) \ar[r]^-{c(g)} & \Th^{rel}(\overline{g}^*(\nu_2)) \ar[r]^-{\overline{g}^*(c(f))} & \Th^{rel}(\overline{g}^*\overline{f}^*(\nu_1))}
$$
The outer diagram shows that
$$\overline{g}^*(c(f))\circ c(g) \simeq c(g\circ f) \colon \dot{\nu}_1 \rightarrow \overline{g}^*\overline{f}^*(\dot{\nu}_1)$$
and hence the result.
\end{proof}

\subsection{Homological stability of block diffeomorphisms}
The main result of this section is the following analog of Theorem \ref{thm:main}. Its proof occupies the entire section.
\begin{theorem} \label{thm:main-block}
For $M_g = \#^g(S^d\times S^d)$ with $d>2$, the stabilization map
$$\HH_k(B\tDiff_D(M_g);\QQ) \rightarrow \HH_k(B\tDiff_D(M_{g+1});\QQ)$$
is an isomorphism in the range $k<\min(d-2,\frac{1}{2}(g-5))$.
\end{theorem}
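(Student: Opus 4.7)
The plan is to compare the Serre spectral sequences in rational homology associated to the fibration \eqref{eq:fibration1},
$$\tAut_{\partial N}(N)/\tDiff_{\partial N}(N) \longrightarrow B\tDiff_{\partial N}(N) \longrightarrow B\tAut_{\partial N}(N),$$
for the manifolds $M_g$ and $M_{g+1}$. Since $\tDiff_{\partial N}(N)\cong \tDiff_D(M_g)$, the problem reduces to (a) identifying the fiber's rational homology together with its $\pi_0\tAut_{\partial N}(N)$-action, (b) identifying the rational homology of the base with the same twisted coefficients, and (c) establishing stability of each piece in the appropriate range.

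For (a), the identity component of the fiber agrees by \eqref{eq:(24)} with Quinn's structure space. The surgery exact sequence of \eqref{eq:fibration2}, together with $L_{2k+1}(\ZZ)=0$ and the simple connectivity of $M_g$, yields
$$\pi_k\!\bigl(\tAut_{\partial N}(N)/\tDiff_{\partial N}(N)\bigr)_{(1)}\tensor\QQ \;\cong\; \HH^d(M_g;\QQ)\tensor \pi_{k+d}(G/O)\tensor\QQ,$$
concentrated in degrees $k\equiv -d\pmod 4$. Since these groups are positively graded and the fiber is nilpotent, it is rationally a product of Eilenberg-MacLane spaces; its rational homology is therefore the free graded-commutative algebra on the above groups. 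As a module over $\pi_0\tAut_{\partial N}(N)$, which by Theorem \ref{thm:arithmetic} is commensurable with $\Sp_{2g}(\ZZ)$ (for $d$ odd) or $O_{2g}(\ZZ)$ (for $d$ even), this fiber homology is a polynomial coefficient system built from symmetric and exterior powers of the standard representation $\HH^d(M_g;\QQ)\cong \QQ^{2g}$, with the $\pi_*(G/O)\tensor\QQ$ factors contributing trivial multiplicities.

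For (b), Theorem \ref{thm:aut} shows that the higher rational homotopy groups of $\aut_{\partial N}(N)$ vanish outside degrees $r(d-1)$; in particular they vanish in positive degrees below $d-1$. Hence the Serre spectral sequence for the cover $\widetilde{B\aut_{\partial N}(N)}\to B\tAut_{\partial N}(N)\to B\Gamma_g$, where $\Gamma_g$ denotes the arithmetic group of Theorem \ref{thm:arithmetic}, has simply connected fiber with no rational cohomology in degrees $1\leq *\leq d-1$. In total degrees $k<d-2$ the natural map
$$\HH_k(\Gamma_g;V)\longrightarrow \HH_k(B\tAut_{\partial N}(N);V)$$
is then an isomorphism for every rational $\Gamma_g$-module $V$, so the $E^2$-page of \eqref{eq:fibration1} in this range reduces to $\HH_p(\Gamma_g;\HH_q(\text{fiber};\QQ))$ with the polynomial coefficient systems from (a).

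The final step invokes homological stability for $\Sp_{2g}(\ZZ)$ and $O_{2g}(\ZZ)$ with polynomial coefficients (Charney, van der Kallen): for a coefficient system $V_g$ of finite polynomial degree, the stabilization $\HH_k(\Gamma_g;V_g)\to \HH_k(\Gamma_{g+1};V_{g+1})$ is an isomorphism in a range $k<\tfrac{1}{2}(g-c)$ with $c$ depending only on the polynomial degree. A bookkeeping argument on the polynomial degrees of the fiber homology contributing in total degrees below $d-2$ shows that the constant $c=5$ suffices uniformly across that range. A standard spectral sequence comparison via the five lemma then produces the desired isomorphism $\HH_k(B\tDiff_D(M_g);\QQ)\to \HH_k(B\tDiff_D(M_{g+1});\QQ)$ for $k<\min(d-2,\tfrac{1}{2}(g-5))$. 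The principal obstacle will be this calibration: ensuring that the Charney stability constants remain bounded by $5$ across all bidegrees $(p,q)$ with $p+q\leq k<d-2$, despite the polynomial degree of $\HH_q(\text{fiber};\QQ)$ growing with $q$.
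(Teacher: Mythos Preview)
Your outline is essentially the paper's own argument: the same fibration \eqref{eq:fibration1}, the same identification of the fiber's rational homology via the surgery fibration (Theorem~\ref{thm:fg}), the same reduction of the base to an arithmetic group in degrees below $d-1$ (Theorem~\ref{thm:aut} and Theorem~\ref{thm:arithmetic}), and the same appeal to Charney's stability for the $E^2$-terms. One technical step you leave implicit but the paper makes explicit: since $\pi_0\widetilde{J}$ is not surjective, one first passes to the finite cover $\overline{B}\tAut_{\partial N}(N)$ corresponding to $\im(\pi_0\widetilde{J})$, so that the fiber becomes the \emph{identity component} of $\tAut_{\partial N}(N)/\tDiff_{\partial N}(N)$; the action is then identified via Lemma~\ref{lemma:norminv} as the standard one on $\HH^d(M_g;\QQ)$. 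Your final worry about the Charney constant is well placed and is something the paper's proof glosses over, but it does work: in fiber degree $q$ the polynomial degree of $\Lambda(\pi_*^{ab}\otimes\QQ)_q$ is at most $q$, so Theorem~\ref{thm:charney} gives stability of $E^2_{p,q}$ for $p<\tfrac{1}{2}(g-4-q)$, and one checks that $n<\tfrac{1}{2}(g-5)$ forces $n-q<\tfrac{1}{2}(g-4-q)$ for all $0\leq q\leq n$.
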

We first use the fibration \eqref{eq:(25)} to calculate the homology and homotopy groups of
$$\tAut_{\partial N}(N)/\tDiff_{\partial N}(N), \quad N = M_g\setminus \interior D,$$
and then the rational homotopy theory of \S2 to finish the proof of Theorem \ref{thm:main-block}.

The rational homotopy type of $G/O$ is
$$(G/O)_\QQ \simeq BO_\QQ \simeq \prod_{\ell \geq 1} K(\QQ,4\ell),$$
so that
\begin{equation} \label{eq:(41)}
\pi_k \Map_*(M_g,G/O) \tensor \QQ \cong \HH^d(M_g;\QQ)\tensor \pi_{k+d}(G/O) \oplus \HH^{2d}(M_g;\QQ)\tensor \pi_{k+2d}(G/O).
\end{equation}

\begin{lemma} \label{lemma:calc}
For $k+2d \equiv 0 \,\, (4)$ the map
$$\lambda_*\colon \pi_k\Map_*(M_g,G/O)\tensor \QQ \rightarrow L_{k+2d}(\ZZ)\tensor \QQ$$
restricts to an isomorphism of the second summand in \eqref{eq:(41)}.
\end{lemma}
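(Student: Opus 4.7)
The plan is to reduce the assertion to the classical calculation of the surgery obstruction for the sphere $S^{2d}$, exploiting naturality of Quinn's fibration under the pinch map onto the top cell of $M_g$.

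First I would verify that both source and target of $\lambda_*$ restricted to the second summand of \eqref{eq:(41)} are one-dimensional over $\QQ$. Since $M_g$ is an oriented closed $2d$-manifold, $\HH^{2d}(M_g;\QQ) \cong \QQ$; the rational equivalence $(G/O)_\QQ \simeq BO_\QQ \simeq \prod_{\ell \geq 1} K(\QQ,4\ell)$ shows that $\pi_{k+2d}(G/O)\tensor\QQ \cong \QQ$ precisely when $k+2d \equiv 0 \pmod 4$; and $L_{k+2d}(\ZZ)\tensor\QQ \cong \QQ$ in the same case by \eqref{eq:(19)}. Hence it suffices to exhibit a single element of the second summand whose surgery obstruction is nonzero.

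Next I would invoke naturality of the surgery obstruction. Let $p\colon M_g \to S^{2d}$ be the degree one map obtained by collapsing the $(2d-1)$-skeleton; it induces an isomorphism on $\HH^{2d}(-;\QQ)$ and kills $\HH^d(-;\QQ)$. From the identification
$$\pi_k\Map_*(X,G/O)\tensor\QQ \;\cong\; \bigoplus_{\ell \geq 1} \widetilde{\HH}^{4\ell-k}(X;\QQ)$$
coming from $(G/O)_\QQ \simeq \prod K(\QQ,4\ell)$, the pullback $p^*\colon \pi_k\Map_*(S^{2d},G/O)\tensor\QQ \to \pi_k\Map_*(M_g,G/O)\tensor\QQ$ is an isomorphism onto the second summand of \eqref{eq:(41)}. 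Naturality of Quinn's fibration \eqref{eq:(25)} then yields a commutative square
$$\xymatrix{\pi_k\Map_*(S^{2d},G/O) \ar[rr]^-{\lambda_*^{S^{2d}}} \ar[d]^-{p^*} && L_{k+2d}(\ZZ) \ar[d]^-{p^*} \\ \pi_k\Map_*(M_g,G/O) \ar[rr]^-{\lambda_*} && L_{k+2d}(\ZZ).}$$
Both terms in the right column are identified with $L_{k+2d}(\ZZ)$ since both manifolds are simply connected of the same dimension $2d$, and the map $p^*$ between them is multiplication by the degree of $p$, which is one; hence the right vertical arrow is an isomorphism.

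Finally, it remains to show that the top row, which under the identification $\pi_k\Map_*(S^{2d},G/O) \cong \pi_{k+2d}(G/O)$ is the absolute surgery obstruction for the sphere, is a rational isomorphism when $k+2d = 4\ell$. This is classical: Milnor's plumbing construction realizes every integer as the signature obstruction of a normal map of closed $4\ell$-manifolds with target $S^{4\ell}$, so this map is surjective even integrally, hence a rational isomorphism between rank-one groups. The main technical point to check carefully is the identification of the right vertical $p^*\colon L_{k+2d}(\ZZ) \to L_{k+2d}(\ZZ)$ as an isomorphism; this rests on the well-known fact that for simply connected manifolds Quinn's $\Lsp(X)$ is, up to rational equivalence, a shift of the simply connected surgery spectrum and in particular independent of $X$, so that $p^*$ acts by the degree.
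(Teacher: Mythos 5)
Your overall strategy (reduce to the top cell and quote Milnor's plumbing) is the same as the paper's, but the step you yourself flag as "the main technical point" is a genuine gap, and it is not the point you think it is. There is no formal "naturality of Quinn's fibration" under the pinch map $p\colon M_g\to S^{2d}$: the normal invariant term $\Map_*(X/\partial X,G/O)$ is contravariant in $X$ while $\Lsp(X)$ is covariant (it depends on $\pi_1$ and is assembled from the homology of $X$), so your square involves a wrong-way map $p^*$ on $L$-groups that does not exist as part of any naturality. The commutativity of that square -- i.e.\ the assertion that the surgery obstruction of a normal invariant pulled back from the top cell equals the obstruction computed over $S^{2d+k}$ -- is a substantive theorem (essentially the Sullivan--Wall characteristic class formula $\lambda(\nu)=\tfrac18\langle L(X)\cdot\ell(\nu),[X]\rangle$, or a surgery composition formula), not a consequence of "$\Lsp(X)$ is independent of $X$ for simply connected $X$". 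The paper sidesteps this entirely by a direct geometric construction: it glues the plumbed normal map $(f,\hat f)\colon(K,\partial K)\to(D^{k+2d},S^{k+2d-1})$ into $D^{2d}\times D^k\subset M_g\times D^k$ to get $(g,\hat g)=(\mathrm{id}\cup f,\mathrm{id}\cup\hat f)$; its normal invariant visibly lies in the second summand and its surgery kernel is concentrated in $K$, so $\lambda_*(g,\hat g)=\lambda_*(f,\hat f)\neq 0$, and the dimension count you correctly set up finishes the proof.

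Two further inaccuracies in your last step. First, the smooth surgery obstruction $\pi_{4\ell}(G/O)\to L_{4\ell}(\ZZ)$ is \emph{not} surjective integrally -- its cokernel is the finite cyclic group $bP_{4\ell}$ of homotopy spheres bounding parallelizable manifolds, which is nontrivial for most $\ell$; only rational surjectivity holds, which is all you need but should be stated correctly. Second, and related, Milnor's plumbing produces a normal map whose boundary $\partial K$ is an exotic homotopy sphere, so $\partial f$ is only a homotopy equivalence and $(f,\hat f)$ is not literally an element of the smooth normal invariant set $\mathcal{N}^{G/O}(D^{k+2d},\partial)$ (which requires $\partial f$ to be a diffeomorphism). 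This is exactly why the paper first compares the smooth surgery sequence with the topological one: over $G/\Top$ the boundary can be taken to be a homeomorphism, and the finiteness of $\pi_*(\Top/O)$ transports the conclusion back to $G/O$ rationally. Your proof needs either this detour or the observation that a suitable nonzero multiple of the plumbing has standard boundary.
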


\begin{proof}
We compare the smooth surgery exact sequence with its version for topological manifolds, cf. Remark \ref{rmk:3.2}. In the diagram
$$
\xymatrix{\pi_k\Map_*(M_g,G/O)\tensor \QQ \ar[r]^-{\lambda_*} \ar[d] & L_{k+2d}(\ZZ)\tensor \QQ \ar@{=}[d] \\ \pi_k\Map_*(M_g,G/Top)\tensor \QQ \ar[r]^-{\lambda_*} & L_{k+2d}(\ZZ)\tensor \QQ}
$$
the left hand vertical map is an isomorphism because $\pi_k(Top/O)$ is finite by \cite{KM,KS}.

Milnor's plumbing construction yields a smooth normal map $(f,\hat{f})$
$$f\colon (K^n,\partial K^n) \rightarrow (D^n,S^{n-1}),$$
with $\partial f$ a homotopy equivalence, and $\lambda_*(f,\hat{f}) \ne 0$ for $n\equiv 0$ (mod $2$), cf. \cite{B}. Since any smooth homotopy sphere is homeomorphic to the standard sphere,
$$\partial f\colon \partial K^n \rightarrow S^{n-1}$$
can be assumed to be a homeomorphism. For $n=k+2d \equiv 0$ (mod $4$), the normal map $(g,\hat{g}) = (id\cup f,id\cup \hat{f})$,
$$g\colon M_g^{2d}\times D^k \setminus \interior(D^{2d}\times D^k) \cup_\partial K^{k+2d} \rightarrow M_g^{2d}\times D^k,$$
has $\lambda_*(g,\hat{g}) = \lambda_*(f,\hat{f}) \ne 0$.
\end{proof}

The homotopy exact sequence of \eqref{eq:(25)} with $N = M_g^{2d}\setminus \interior(D^{2d})$
$$
[M_g^{2d},\Omega^{k+1}G/O]_* \stackrel{\eta_{k+1}}{\rightarrow} L_{2d+k+1}(\ZZ) \rightarrow \pi_k(\tAut_{\partial N}(N)/\tDiff_{\partial N}(N),id_N) \stackrel{\eta_k}{\rightarrow}\cdots
$$
induces short exact sequences
$$0\rightarrow L_{2d+k+1}(\ZZ)/\im \eta_{k+1} \rightarrow \pi_k(\tAut_{\partial N}(N)/\tDiff_{\partial N}(N)) \rightarrow \im \eta_k \rightarrow 0.$$
The left-hand term is a finite cyclic group and
$$\im \eta_k \tensor \QQ \cong \HH^d(M_g;\QQ)\tensor \pi_{k+d}(G/O)$$
according to Lemma \ref{lemma:calc}. The fundamental group of $\tAut_{\partial N}(N)/\tDiff_{\partial N}(N)$ is meta-abelian with finite kernel, and maybe abelian. Let us write
$$\pi_k := \pi_k(\tAut_{\partial N}(N)/\tDiff_{\partial N}(N),id_N)$$
and let $\pi_k^{ab}$ be the abelian quotient.

\begin{theorem} \label{thm:fg}
For $d>2$ and $N = M_g^{2d}\setminus \interior(D^{2d})$,
\begin{enumerate}
\item[(i)] $\pi_k^{ab}\tensor \QQ \cong \HH^d(M_g;\QQ)\tensor \pi_{k+d}(G/O).$
\item[(ii)] $\HH_*( \big[ \tAut_{\partial N}(N)/\tDiff_{\partial N}(N) \big]_{(1)};\QQ) \cong \Lambda(\pi_*^{ab}\tensor \QQ)$, the free graded commutative algebra on the graded vector space $\pi_*^{ab}\tensor \QQ$.
\end{enumerate}
\end{theorem}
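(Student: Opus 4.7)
The plan is to derive both parts of Theorem \ref{thm:fg} from the Quinn fibration \eqref{eq:(25)}, combined with the rational identification \eqref{eq:(41)} of $\pi_*\Map_*(M_g, G/O) \tensor \QQ$ and Lemma \ref{lemma:calc}.

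For part (i), I would chase the long exact sequence of \eqref{eq:(25)} after tensoring with $\QQ$. The groups $L_*(\ZZ) \tensor \QQ$ are concentrated in degrees $\equiv 0 \pmod 4$, so for each $k$ at most one of $L_{k+2d}(\ZZ) \tensor \QQ$ and $L_{k+2d+1}(\ZZ) \tensor \QQ$ is nonzero. When $L_{k+2d}(\ZZ) \tensor \QQ \ne 0$, Lemma \ref{lemma:calc} guarantees that $\lambda_*$ surjects onto it via the $\HH^{2d}(M_g;\QQ)$-summand of \eqref{eq:(41)}; when $L_{k+2d+1}(\ZZ) \tensor \QQ \ne 0$, the same lemma in degree $k+1$ shows that the connecting homomorphism into $\pi_k$ vanishes. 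In either case, the long exact sequence identifies $\pi_k^{ab} \tensor \QQ$ with $\ker \lambda_* = \HH^d(M_g;\QQ) \tensor \pi_{k+d}(G/O)$. For $k \geq 2$, $\pi_k$ is automatically abelian; for $k = 1$, passing to the abelianization removes only a finite normal subgroup, in accordance with the meta-abelian structure recorded before the theorem.

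For part (ii), the strategy is to argue that $\big[\tAut_{\partial N}(N)/\tDiff_{\partial N}(N)\big]_{(1)}$ is rationally equivalent to a generalized Eilenberg-MacLane space, so the assertion follows from the standard computation of rational homology of a rational GEM as the free graded commutative algebra on its rational homotopy groups. Both ends of the fibration \eqref{eq:(25)} are infinite loop spaces: $\Map_*(M_g, G/O)$ inherits this structure from the infinite loop space $G/O$, and $\Lsp(M_g)$ is the infinite loop space of the quadratic $L$-theory spectrum of $\ZZ$ (recall $\pi_1 M_g = 0$). The surgery map $\lambda$ is an infinite loop map, constructed via a map of spectra. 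Consequently the component of the identity in the homotopy fiber carries a compatible infinite loop structure. A nilpotent infinite loop space of finite $\QQ$-type is rationally equivalent to $\prod_i K(\QQ, n_i)$, and by part (i) the indexing rational vector space is precisely $\pi_*^{ab} \tensor \QQ$.

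The main obstacle I anticipate is the clean verification that Quinn's fibration carries the advertised infinite loop structures, and in particular that $\lambda$ is an infinite loop map in a form usable here; this is a folklore fact but requires bookkeeping relative to the geometric definition of $\Ssp_{\partial N}^{G/O}(N)$ given above. If citing this becomes awkward, an alternative is to produce an explicit Sullivan minimal model: both $\Map_*(M_g, G/O)$ and $\Lsp(M_g)$ are rationally GEMs, and because $\lambda$ corresponds to a \emph{linear} map on rational homotopy (coming from a map of spectra), the minimal model of the fiber acquires no new differential, so it is itself a rational GEM; this yields the asserted free graded commutative structure directly.
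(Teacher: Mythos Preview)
Your argument for part (i) is essentially the paper's: both chase the rational long exact sequence of \eqref{eq:(25)} using \eqref{eq:(41)} and Lemma~\ref{lemma:calc}.

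For part (ii) your route is correct in outline but different from the paper's, and the difference is precisely the ``main obstacle'' you flag. The paper does \emph{not} verify that $\lambda$ is an infinite loop map. Instead it observes that, since $L_n(\ZZ)=0$ for $n$ odd, the map
\[
\pi_k\big(\tAut_{\partial N}(N)/\tDiff_{\partial N}(N),1_N\big)\tensor\QQ \longrightarrow \pi_k\Map_*(M_g,G/O)\tensor\QQ
\]
is injective. Now $\Map_*(M_g,G/O)$ is a loop space (because $G/O$ is), hence rationally a product of Eilenberg--MacLane spaces; this is equivalent to the criterion that every class in $\pi_r\tensor\QQ$ pairs nontrivially with some class in $\HH^r(-;\QQ)$. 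Injectivity on rational homotopy lets you pull such witnessing cohomology classes back to the fiber, so the fiber satisfies the same criterion and is itself rationally a GEM. This sidesteps entirely the bookkeeping about infinite loop structures on $\Lsp(M_g)$ and on $\lambda$. Your alternative via minimal models would also work, but the paper's injectivity argument is shorter and uses nothing beyond the loop space structure on $G/O$ already invoked elsewhere.
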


\begin{proof}
(i) follows from the previous lemma and \eqref{eq:(25)}. For (ii) we first note that since $L_n(\ZZ) = 0$ for $n$ odd,
\begin{equation} \label{eq:(27)}
\pi_k (\tAut_{\partial N}(N)/\tDiff_{\partial N}(N),1_N)\tensor \QQ \rightarrow \pi_k \Map_*(M_g,G/O)\tensor \QQ
\end{equation}
is injective. The space $G/O$ is an infinite loop space, in particular a loop space, $G/O \simeq \Omega(B(G/O))$. The same is then the case for $\Map_*(M_g,G/O)$. A loop space $X = \Omega Y$ has no rational $k$-invariants; its rational homotopy type is a product of Eilenberg-MacLane spaces. This happens if and only if the following criterion is satisfied: For each $\alpha \in \pi_r(X)\tensor \QQ$ there exists a cohomology class $\xi\in \HH^r(X;\QQ)$ with $\langle \xi , \alpha \rangle \ne 0$. Since \eqref{eq:(27)} is injective, the criterion is satisfied for $\big[ \tAut_{\partial N}(N)/\tDiff_{\partial N}(N) \big]_{(1)}$ which therefore has the rational homotopy type of a product of Eilenberg-MacLane spaces. For such spaces, rational homology and homotopy are related as stated in (ii).
\end{proof}

We shall next examine the homomorphism
$$\widetilde{J}\colon \pi_0 \tDiff_{\partial N}(N) \rightarrow \pi_0 \tAut_{\partial N}(N)$$
when $N = M_g\setminus \interior D$. The combination of Theorem \ref{thm:arithmetic} and \eqref{eq:red} provides the exact sequence
$$0\rightarrow K \rightarrow \pi_0\aut_{\partial N}(N) \rightarrow \Aut(\ZZ^{2g},q) \rightarrow 0$$
with $K$ finite, and we remember that $\aut_{\partial N}(N) \simeq \tAut_{\partial N}(N)$.

We next involve C.T.C. Wall's classification of $2d$-dimensional $(d-1)$-connected manifolds \cite{Wall,Wall3}. See also Kreck's paper \cite{K}. Given such a manifold $M$ (e.g. $M_g$ above), write $N = M\setminus \interior D$. Wall defines a quadratic map
$$\alpha\colon \HH_d(N;\ZZ) \rightarrow \ZZ/(1-(-1)^d)\ZZ$$
with
$$\alpha(x+y) - \alpha(x) - \alpha(y) = q(x,y),$$
where $q$ is the intersection pairing. We recall the definition. A homology class $x\in\HH_d(N)$ is represented by an embedding $x\colon S^d\hookrightarrow M$ with a stably trivial normal bundle $\nu(x)$. Its isomorphism class $[\nu(x)]$ is an element
$$[\nu(x)]\in \ker(\pi_d(BSO(d))\stackrel{i_*}{\rightarrow} \pi_d(BSO(d+1))).$$
The calculation of $\ker(i_*)$, contained in \cite{Steenrod,B} is
$$\ker(i_*) = \ZZ/(1-(-1)^d)\ZZ$$
when $d\ne 1,3,7$. In the exceptional cases, $\ker(i_*) = 0$. For even $d$ or in the exceptional cases, $\alpha(x)$ is determined by $q(x,x)$. Wall shows that
$$\widetilde{h}\colon \tDiff_{\partial N}(N) \rightarrow \Aut(\HH_d(M),q,\alpha)$$
is surjective. The target of $\widetilde{h}$ is the automorphism group of the quadratic form $(\HH_d(M),q,\alpha)$. Its kernel is examined in \cite{KS,Wall}.

We now return to the situation $M = M_g^{2d} = \#^g(S^d\times S^d)$, $N = M_g\setminus \interior D$ and
$$\widetilde{J}\colon \pi_0 \tDiff_{\partial N}(N) \rightarrow \pi_0 \tAut_{\partial N}(N).$$

\begin{lemma} \label{lemma:tilde}
There is an exact sequence of groups
$$1\rightarrow \widetilde{K}_g \rightarrow \im(\pi_0 \widetilde{J}) \rightarrow \Aut(\HH_d(M_g),q,\alpha) \rightarrow 1$$
with $\widetilde{K}_g$ finite.
\end{lemma}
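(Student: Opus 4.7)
The plan is to assemble the exact sequence from two ingredients already in hand: Theorem \ref{thm:arithmetic} (applied to $N$), which gives the short exact sequence
$$1\rightarrow K \rightarrow \pi_0\tAut_{\partial N}(N) \rightarrow \Aut(\ZZ^{2g},q)\rightarrow 1$$
with $K$ finite, and Wall's surjectivity statement $\widetilde{h}\colon \tDiff_{\partial N}(N)\twoheadrightarrow \Aut(\HH_d(M_g),q,\alpha)$ recalled above (Wall proves this for $\Diff_{\partial N}(N)$, but the inclusion $\Diff_{\partial N}(N)\hookrightarrow \tDiff_{\partial N}(N)$ is enough to transport surjectivity to the block setting, since a concordance-class representative can be chosen to be an honest diffeomorphism realizing a prescribed element of $\Aut(\HH_d(M_g),q,\alpha)$).

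The first thing I would verify is that a block self-homotopy equivalence coming from a block diffeomorphism preserves Wall's quadratic refinement $\alpha$, not merely the intersection form $q$. This is essentially tautological: $\alpha(x)$ is defined using the isomorphism class of the stable normal bundle of an embedded representative $S^d\hookrightarrow N$ of $x$, and diffeomorphisms transport embeddings to embeddings with isomorphic normal bundles. Consequently the composition
$$\pi_0\tDiff_{\partial N}(N)\xrightarrow{\pi_0\widetilde{J}} \pi_0\tAut_{\partial N}(N) \xrightarrow{} \Aut(\ZZ^{2g},q)$$
lands inside the subgroup $\Aut(\HH_d(M_g),q,\alpha)\subseteq \Aut(\ZZ^{2g},q)$, and by Wall it surjects onto it.

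Restricting this composition to $\im(\pi_0\widetilde{J})$ produces a surjection $\im(\pi_0\widetilde{J})\twoheadrightarrow \Aut(\HH_d(M_g),q,\alpha)$. Define $\widetilde{K}_g$ to be its kernel; then by construction
$$\widetilde{K}_g = \im(\pi_0\widetilde{J})\cap K,$$
a subgroup of the finite group $K$, hence itself finite. This yields the desired short exact sequence.

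The only genuinely non-formal point is the surjectivity onto $\Aut(\HH_d(M_g),q,\alpha)$, which is the cited theorem of Wall; everything else is diagram chasing against Theorem \ref{thm:arithmetic}. I expect no substantial obstacle, although one must be slightly careful about the distinction between the honest and block settings when invoking Wall, which is why I would spell out the inclusion $\Diff_{\partial N}(N)\hookrightarrow \tDiff_{\partial N}(N)$ at the relevant step.
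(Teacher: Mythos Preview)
Your proposal is correct and follows essentially the same approach as the paper: the paper packages exactly the two ingredients you name into a commutative diagram with exact rows
\[
\xymatrix{1 \ar[r] & L \ar[d] \ar[r] & \pi_0\tDiff_{\partial N}(N) \ar[d] \ar[r]^-{\widetilde{h}} & \Aut(\HH_d(M_g),q,\alpha) \ar@{^{(}->}[d] \ar[r] & 1 \\
1 \ar[r] & K \ar[r] & \pi_0\tAut_{\partial N}(N) \ar[r]^-h & \Aut(\HH_d(M_g),q) \ar[r] & 1}
\]
and then simply says ``the result follows,'' which is precisely your diagram chase identifying $\widetilde{K}_g = \im(\pi_0\widetilde{J})\cap K$. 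Your extra care about transporting Wall's surjectivity from $\Diff$ to $\tDiff$ is warranted but easy, since $\pi_0\Diff_{\partial N}(N) = \pi_0\tDiff_{\partial N}(N)$ by the very definition of block diffeomorphisms on $0$-simplices.
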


\begin{proof}
By \eqref{eq:red}, Theorem \ref{thm:arithmetic} and the above discussion of \cite{Wall}, we have a commutative diagram of groups with exact rows

$$
\xymatrix{1 \ar[r] & L \ar[d] \ar[r] & \pi_0\tDiff_{\partial N}(N) \ar[d] \ar[r]^-{\widetilde{h}} & \Aut(\HH_d(M_g),q,\alpha) \ar@{^{(}->}[d] \ar[r] & 1 \\
1 \ar[r] & K \ar[r] & \pi_0\tAut_{\partial N}(N) \ar[r]^-h & \Aut(\HH_d(M_g),q) \ar[r] & 1.}
$$
The result follows.
\end{proof}

For a finitely generated free abelian group $A$ and $\epsilon\in \{-1,1\}$, let $H(A,\epsilon)$ denote the hyperbolic module
$$H(A,\epsilon) = A\oplus A^*$$
with quadratic form
$$\alpha \colon H(A,\epsilon)\rightarrow \ZZ/(1-\epsilon)\ZZ$$
and associated bilinear form
$$\mu \colon H(A,\epsilon)\times H(A,\epsilon) \rightarrow \ZZ$$
given by
$$\alpha(x,f) = f(x),\quad \mu((x,f),(y,g)) = f(y) + \epsilon g(x).$$
If $e_1,\ldots,e_g$ is a basis for $A$ and $f_1,\ldots,f_g$ the dual basis for $A^*$, then
$$\mu\colon \ZZ^{2g}\times \ZZ^{2g}\rightarrow \ZZ$$
is given by
$$\mu(u,v) = u^t\left( \begin{array}{cc} 0 & I \\ \epsilon I & 0 \end{array} \right)v, \quad u,v\in \ZZ^{2g}.$$
We have
$$\Aut(H(\ZZ^g,\epsilon),\mu) = \left\{ \begin{array}{ll} \OO_{g,g}(\ZZ), & \epsilon = +1 \\ \Sp_{2g}(\ZZ), & \epsilon = -1 \end{array} \right.$$
Notice that
$$\Aut(H(\ZZ^g,+1),\mu,\alpha) = \Aut(H(\ZZ^g,+1),\mu),$$
whereas $\Aut(H(\ZZ^g,-1),\mu,\alpha)$ is a proper subgroup of $\Aut(H(\ZZ^g,-1),\mu)$ of $2$-power index.

For $M_g = \#^g (S^d\times S^d)$, the quadratic module $(\HH_d(M_g),q,\alpha)$ is hyperbolic,
$$(\HH_d(M_g),q,\alpha) \cong H(\ZZ^g,(-1)^d).$$

The subgroup $\im(\pi_0 \widetilde{J})$ of $\pi_0\tAut_{\partial N}(N)$ has finite index by Theorem \ref{thm:arithmetic} and Lemma \ref{lemma:tilde}, so it defines a finite covering
$$\rho \colon \overline{B}\tAut_{\partial N}(N)\rightarrow B\tAut_{\partial N}(N)$$
and the map $B\widetilde{J}$ lifts to a map
$$\pi\colon B\tDiff_{\partial N}(N) \rightarrow \overline{B} \tAut_{\partial N}(N).$$
Its homotopy fiber is easily identified with the identity component of the homogeneous space
$$\tAut_{\partial N}(N)/\tDiff_{\partial N}(N).$$
The resulting Serre spectral sequence has

\begin{equation} \label{eq:(43)}
E_{p,q}^2(M_g) = \HH_p(\overline{B}\tAut_{\partial N}(N);\HH_q( \big[ \tAut_{\partial N}(N)/\tDiff_{\partial N}(N) \big]_{(1)};\QQ))
\end{equation}
and it converges to
$$\HH_*(B\tDiff_{\partial N}(N);\QQ) = \HH_*(B\tDiff_D(M_g);\QQ).$$
There are twisted coefficients in \eqref{eq:(43)} in the sense that $\pi_1\overline{B}\tAut_{\partial N}(N) = \im(\pi_0(\widetilde{J})) \subset \pi_0\tAut_{\partial N}(N)$ acts on the fiber. The action can be described as follows.

Let $\varphi\colon M_g\rightarrow M_g$, $\varphi|_D = id$ represent an element of $\pi_0\tDiff_D(M_g)$ and let
$$f\colon D^k\times N \rightarrow D^k\times N$$
be a homotopy equivalence with $\partial f$ a diffeomorphism; $f$ represents an element of the structure set
$$\Ss_{\partial(D^k\times N)}(D^k\times N) = \pi_k(\tAut_{\partial N}(N)/\tDiff_{\partial N}(N),1_N)$$
and $[\varphi]\in \im(\pi_0 \widetilde{J})$ acts on $[f]$ by composition
$$\xymatrix{D^k \times N \ar[r]^-f & D^k\times N \ar[r]^-{D^k\times \varphi} & D^k\times N.}$$
Since $\varphi$ is a diffeomorphism, Lemma \ref{lemma:norminv} reduces to
$$\eta((D^k\times \varphi)\circ f) = (D^k\times \varphi)^{-1}(\eta(f)).$$

It follows from Theorem \ref{thm:fg}(i) that the action on the $k$'th rational homotopy group
\begin{equation} \label{eq:(29)}
\pi_k(\tAut_{\partial N}(N)/\tDiff_{\partial N}(N),1_N)\tensor \QQ = \HH^d(M_g;\QQ)\tensor \pi_{k+d}(G/O)
\end{equation}
is via the induced map $(\varphi^{-1})^*\tensor id$. The action on homology is $\Lambda((\varphi^{-1})^*\tensor id)$ according to Corollary \ref{thm:fg}(ii).

It follows that in \eqref{eq:(43)}, the action of $\pi_1\overline{B}\tAut_{\partial N}(N) = \im(\pi_0 \widetilde{J})$ on the fiber is via the projection
$$\widetilde{h}\colon \im(\pi_0\widetilde{J}) \rightarrow \Aut(H(\ZZ^g,(-1)^d),q,\alpha)$$
and this action is understood homologically by results from \cite{C}. More precisely, Theorem \ref{thm:aut} tells us that
$$\HH_*(B\tAut_{\partial N}(N);\QQ) \rightarrow \HH_*(B\pi_0(\tAut_{\partial N}(N));\QQ)$$
is $d$-connected. The same is then the case for
$$\HH_*(\overline{B}\tAut_{\partial N}(N);\QQ) \rightarrow \HH_*(B\im(\pi_0 \widetilde{J});\QQ).$$
For the $E^2$-term in \eqref{eq:(43)} we get for $p<d-1$
$$E_{p,q}^2(M_g) \cong \HH_p(B\im(\pi_0\widetilde{J}); \HH_q( \big[ \tAut_{\partial N}(N)/\tDiff_{\partial N}(N) \big]_{(1)};\QQ)),$$
and since $\widetilde{h}$ has finite kernel by Lemma \ref{lemma:tilde} and the action is through $\widetilde{h}$, we get for $p<d-1$
\begin{equation} \label{eq:e2}
E_{p,q}^2 \cong \HH_p(\Aut(H(\ZZ^g,(-1)^d),q,\alpha);\Lambda(\pi_*^{ab}\tensor \QQ))
\end{equation}
with $\Lambda(\pi_*^{ab}\tensor \QQ)$ displayed in Theorem \ref{thm:fg}.

\begin{theorem}[{\cite{C}}] \label{thm:charney}
For $k\geq 1$, the stabilization map
$$\HH_p(\Aut(H(\ZZ^g,\epsilon),q,\alpha),H(\ZZ^g,\epsilon)^{\tensor k}) \rightarrow \HH_p(\Aut(H(\ZZ^{g+1},\epsilon),q,\alpha);H(\ZZ^{g+1},\epsilon)^{\tensor k})$$
is $\frac{1}{2}(g-4-k)$-connected.  $\hfill \square$
\end{theorem}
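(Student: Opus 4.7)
My plan is to follow Charney's blueprint for arithmetic group stability, combined with a polynomial-coefficients argument in the style of Dwyer and van der Kallen to handle the tensor power. The main tool is a semi-simplicial set $W_g^\bullet$ on which $G_g := \Aut(H(\ZZ^g,\epsilon),q,\alpha)$ acts, whose $p$-simplices are ordered $(p{+}1)$-tuples $(v_0,\ldots,v_p)$ in $H(\ZZ^g,\epsilon)$ such that each $v_i$ extends to a hyperbolic pair $(v_i,w_i)$ with $\alpha(v_i)=0$, and such that the spans $\langle v_i,w_i\rangle$ are pairwise orthogonal direct summands splitting off a hyperbolic subspace $\cong H(\ZZ^{p+1},\epsilon)$. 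The stabilizer of a $p$-simplex is then $G_{g-p-1}$, acting on the orthogonal complement.

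First, I would establish the high connectivity of $W_g^\bullet$, namely that it is $\lfloor\tfrac{g-3}{2}\rfloor$-connected. This is Charney's key geometric input and is the main obstacle: one proves it by comparing $W_g^\bullet$ with the poset of hyperbolic summands via an argument that reduces connectivity of $W_g^\bullet$ to connectivity of the associated Tits building for $\Sp_{2g}$ or $\OO_{g,g}$, using that a unimodular isotropic vector in $H(\ZZ^g,\epsilon)$ always extends to a hyperbolic pair (this is where the quadratic refinement $\alpha$ enters, but since $\alpha$ vanishes on hyperbolic summands of our standard module, no essential obstruction arises). The connectivity of the symplectic/orthogonal building is a classical result one may invoke.

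With connectivity in hand, I would run the equivariant spectral sequence
\[
E^1_{p,q} = \HH_q\bigl(G_{g-p-1}; C_p(W_g^\bullet)\tensor_{\ZZ[G_g]} H(\ZZ^g,\epsilon)^{\tensor k}\bigr) \Longrightarrow \HH_{p+q}(\text{const}),
\]
so that in the stable range the $E^1$-column computes $\HH_*(G_g; H(\ZZ^g,\epsilon)^{\tensor k})$ in terms of homology of smaller groups with coefficients obtained by restricting the tensor power. The restriction of $H(\ZZ^g,\epsilon)^{\tensor k}$ to the stabilizer $G_{g-p-1}$ decomposes (as a $G_{g-p-1}$-module, up to trivial factors coming from the split hyperbolic summand) into a direct sum of tensor powers $H(\ZZ^{g-p-1},\epsilon)^{\tensor j}$ with $j\leq k$. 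This realizes the tensor power as a coefficient system of polynomial degree $k$ in the sense of Dwyer.

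Finally, I would set up a double induction on the pair $(p,k)$ ordered lexicographically, comparing the spectral sequences for $g$ and $g+1$ via the stabilization map $H(\ZZ^g,\epsilon)\hookrightarrow H(\ZZ^{g+1},\epsilon)$. The base case $k=0$ is Charney's untwisted stability theorem with range roughly $\tfrac{1}{2}(g-4)$, proved from the same spectral sequence with trivial coefficients. For the inductive step, the appearance of $H^{\tensor j}$ with $j<k$ in the $E^1$-terms forces one to shift the stability range down by one for each increment of $k$, yielding the stated $\tfrac{1}{2}(g-4-k)$-connectivity of the stabilization map. The only nontrivial bookkeeping is tracking the effect of the non-trivial part of the $\alpha$-stabilizer in the orthogonal case $\epsilon=+1$ and when $\epsilon=-1$ but this is handled by the finite-index subgroup structure already displayed in the paper.
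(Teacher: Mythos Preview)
The paper does not prove this statement: Theorem~\ref{thm:charney} is quoted verbatim from Charney~\cite{C} and is closed immediately with a $\square$, with no argument given. There is therefore nothing in the paper itself to compare your proposal against.

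Viewed instead as a reconstruction of Charney's own proof, your outline is broadly on target: a highly connected $G_g$-complex whose simplex stabilizers are $G_{g-p-1}$, followed by the standard equivariant spectral sequence and a decomposition of the restricted coefficient module, is exactly the architecture of~\cite{C}. Two points deserve care if you intend this as more than a sketch. First, Charney's complex is built from split unimodular sequences of isotropic vectors, and its connectivity is established by a direct induction internal to her paper, not by reduction to the spherical Tits building; the building for $\Sp_{2g}$ or $\OO_{g,g}$ has the homotopy type of a wedge of $(g-1)$-spheres, which is not the connectivity statement you need here. Second, your asserted connectivity bound and the resulting numerology should be checked against Charney's actual constants, since the precise offset is what produces the range $\tfrac12(g-4-k)$; the ``polynomial degree $k$'' language you invoke is a later repackaging (Dwyer, van der Kallen) of the same bookkeeping Charney does by hand when decomposing $H(\ZZ^g,\epsilon)^{\otimes k}$ over a stabilizer.
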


\begin{proof}[Proof of Theorem {\ref{thm:main-block}}]
We use the Serre spectral sequence with $E^2$-term \eqref{eq:(43)} and abutment
$$\HH_*(B\tDiff_{\partial N}(N);\QQ) = \HH_*(B\tDiff_D(M_g);\QQ).$$
For base degree $p<d-1$ we proved above that
$$E_{p,*}^2 \cong \HH_p(\Aut(H(\ZZ^g,\epsilon);q,\alpha);\Lambda(\pi_*^{ab}\tensor \QQ))$$
with
$$\pi_*^{ab} \cong \HH_d(M_g;\QQ)\tensor \pi_{*+d}(G/O).$$
Theorem \ref{thm:charney} implies that
$$E_{p,q}^2(M_g)\rightarrow E_{p,q}^2(M_{g+1})$$
is an isomorphism in the stated range of (total) degrees. The same is then true for the abutment.
\end{proof}

\section{Homological stability for $B\Diff_D(M_g)$}
The passage from the group of block diffeomorphisms to the group of actual diffeomorphisms is a consequence of Morlet's lemma of disjunction, which we now recall in the form given in \cite{BLR}.

Let $V$ be a compact $n$-manifold (possibly with boundary) and let $D_0\subset \operatorname{int}(V)$ be an $n$-disk. There is a diagram of inclusions
$$
\xymatrix{\Diff_\partial(D_0) \ar[d] \ar[r] & \Diff_\partial(V) \ar[d] \\ \tDiff_\partial(D_0) \ar[r] & \tDiff_\partial(V)}
$$
where the horizontal maps extend a diffeomorphism of the disk $D_0$ by the identity in the complement. We consider the induced diagram
\begin{equation} \label{5.1}
\xymatrix{B\Diff_\partial(D_0) \ar[d] \ar[r] & B\Diff_\partial(V) \ar[d] \\ B\tDiff_\partial(D_0) \ar[r] & B\tDiff_\partial(V)}
\end{equation}
Morlet's lemma of disjunction is the following result about the horizontal homotopy fibers in \eqref{5.1}, see e.g., \cite[p.31]{BLR}.
\begin{theorem}[Morlet] \label{thm:5.1}
If $V$ is $k$-connected and $k+1 < \frac{1}{2}\dim V$, then
$$\pi_j(\Diff_\partial(V),\Diff_\partial(D_0))\rightarrow \pi_j(\tDiff_\partial(V),\tDiff_\partial(D_0))$$
is $(2k-2)$-connected.
\end{theorem}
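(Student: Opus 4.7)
The plan is to recast the theorem as a connectivity statement for (block) embedding spaces of $D_0$ in $V$, and to deduce this from the classical disjunction lemma for embeddings. Writing $F := \mathrm{hofib}(B\Diff_\partial(D_0)\to B\Diff_\partial(V))$ and $\tilde F := \mathrm{hofib}(B\tDiff_\partial(D_0)\to B\tDiff_\partial(V))$, the isomorphisms $\pi_j(\Diff_\partial(V),\Diff_\partial(D_0))\cong \pi_j(F)$ and $\pi_j(\tDiff_\partial(V),\tDiff_\partial(D_0))\cong \pi_j(\tilde F)$ reduce the theorem to showing that $F\to\tilde F$ is $(2k-2)$-connected.

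The first step is to identify $F$ with an embedding space. The group $\Diff_\partial(V)$ acts transitively on the component of the inclusion in the space of neat codimension-zero embeddings of $D_0$ into $\mathrm{int}(V)$, with stabilizer splitting up to homotopy as $\Diff_\partial(D_0)\times \Diff_\partial(V\setminus\mathrm{int}\,D_0)$. This yields a fibration
$$\Diff_\partial(V\setminus\mathrm{int}\,D_0)\longrightarrow F\longrightarrow \Emb_\partial(D_0,V)_\iota,$$
and analogously
$$\tDiff_\partial(V\setminus\mathrm{int}\,D_0)\longrightarrow \tilde F\longrightarrow \widetilde\Emb_\partial(D_0,V)_\iota.$$
A $3\times 3$ homotopy-fiber argument shows that $F\to \tilde F$ has the same homotopy fiber as the ``concordance'' comparison $\phi_{D_0}\to \phi_V$, where $\phi_X := \mathrm{hofib}(B\Diff_\partial(X)\to B\tDiff_\partial(X))$, so equivalently one must show that $\phi_{D_0}\to \phi_V$ is $(2k-2)$-connected.

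The geometric core is the classical disjunction estimate, applied both on bases and on fibers of the above fibrations: a $j$-parameter family of block embeddings $D^j\times D_0\hookrightarrow D^j\times V$, constant on the relevant boundary faces, admits a deformation through block embeddings to a smooth isotopy provided $j\leq 2k-1$. One argues by induction on a handle decomposition of $D^j\times D_0$, using the $k$-connectedness of $V$ to place each handle core in general position and the codimension assumption $k+1<\tfrac12\dim V$ to disjoin successive cores via Hudson--Haefliger disjunction. Summing the handle-wise connectivity contributions then yields the claimed range.

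The main obstacle is this disjunction step. The careful handle-trading and general-position arguments, together with the bookkeeping of connectivity across handles, are the actual substance of Morlet's lemma as carried out in \cite{BLR}; the plan above is standard packaging, and I would invoke that reference for the geometric heart of the proof rather than reproducing the handle-theoretic analysis from scratch.
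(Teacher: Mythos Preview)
The paper does not give a proof of this theorem: it is stated as Morlet's lemma of disjunction and attributed to \cite[p.~31]{BLR} without further argument. Your proposal likewise ends by invoking \cite{BLR} for the ``geometric heart,'' so on the bottom line you and the paper agree.

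That said, the packaging you add before deferring to \cite{BLR} does not actually advance the argument. Your $3\times 3$ step, reducing the statement about $F\to\tilde F$ to a statement about $\phi_{D_0}\to\phi_V$, is just the standard fact that in a commutative square the total homotopy fiber can be computed either horizontally-then-vertically or vertically-then-horizontally; it replaces one formulation of the theorem by an equivalent one (indeed, the paper uses exactly this equivalence \emph{as a corollary} of the theorem, immediately after stating it). The embedding-space fibrations you write down are reasonable but are not used to prove anything, and the ``geometric core'' paragraph is too schematic to count as more than a pointer to the literature. So your proposal is not wrong, but it is essentially a citation dressed up with reformulations; since the paper itself simply cites the result, that is adequate here.
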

It follows from \eqref{5.1} that the vertical fibers are also related by a $(2k-2)$-connected map. For $N=M_g\setminus \operatorname{int} D$, we get that
$$\pi_j(\tDiff_\partial(D_0)/\Diff_\partial(D_0)) \rightarrow \pi_j(\tDiff_D(M_g)/\Diff_D(M_g))$$
is $2(d-2)$-connected which in turn implies that
$$\pi_j(\tDiff_D(M_g)/\Diff_D(M_g)) \rightarrow \pi_j(\tDiff_D(M_{g+1})/\Diff_D(M_{g+1}))$$
is $2(d-2)$-connected.

The combination of Theorem \ref{thm:main-block} and Morlet's theorem proves our main result.
\begin{theorem} \label{thm:5.2}
For $d>2$ the stabilization homomorphism
$$\HH_k(B\Diff_D(M_g^{2d});\QQ)\rightarrow \HH_k(B\Diff_D(M_{g+1}^{2d});\QQ)$$
is an isomorphism for $k<\min(d-2,\frac{1}{2}(g-5))$.
\end{theorem}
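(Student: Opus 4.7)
The plan is to combine the block-diffeomorphism stability result of Theorem~\ref{thm:main-block} with the consequence of Morlet's lemma of disjunction recorded just above the theorem statement, via a Serre spectral sequence comparison argument. Consider the fibration
$$\tDiff_D(M_g)/\Diff_D(M_g) \longrightarrow B\Diff_D(M_g) \longrightarrow B\tDiff_D(M_g)$$
and the corresponding one with $M_g$ replaced by $M_{g+1}$. The stabilization maps assemble into a morphism between these two fibrations, and hence a morphism of the associated Serre spectral sequences converging to $\HH_*(B\Diff_D(M_g);\QQ)$ and $\HH_*(B\Diff_D(M_{g+1});\QQ)$ respectively.

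On the base, Theorem~\ref{thm:main-block} gives that the stabilization $B\tDiff_D(M_g) \to B\tDiff_D(M_{g+1})$ is an isomorphism on rational homology with constant coefficients in degrees $p < \min(d-2,\tfrac{1}{2}(g-5))$. On the fiber, the statement immediately following Theorem~\ref{thm:5.1} asserts that
$$\tDiff_D(M_g)/\Diff_D(M_g) \longrightarrow \tDiff_D(M_{g+1})/\Diff_D(M_{g+1})$$
is $2(d-2)$-connected on homotopy groups. Since $d>2$ gives $2(d-2) > d-2$, the fiber map is more highly connected than required; standard arguments then convert this to a rational homology isomorphism in all degrees $q$ relevant for total degree $k < d-2$. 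Applying the Zeeman comparison theorem to the map of spectral sequences then yields the desired isomorphism
$$\HH_k(B\Diff_D(M_g);\QQ) \stackrel{\cong}{\longrightarrow} \HH_k(B\Diff_D(M_{g+1});\QQ), \qquad k < \min(d-2,\tfrac{1}{2}(g-5)).$$

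The main technical obstacle is the twisted-coefficient issue: the local system $\HH_q(\tDiff_D(M_g)/\Diff_D(M_g);\QQ)$ on $B\tDiff_D(M_g)$ is a priori nontrivial, carrying the action of $\pi_0\tDiff_D(M_g)$ by conjugation, while Theorem~\ref{thm:main-block} as stated provides only constant-coefficient stability. However, by Lemma~\ref{lemma:tilde}, this action factors through the arithmetic quotient $\Aut(H(\ZZ^g,(-1)^d),q,\alpha)$, which is precisely the framework in which Theorem~\ref{thm:main-block} is proved via Charney's stability theorem (Theorem~\ref{thm:charney}). A parallel application of Charney's theorem to the present coefficient system should upgrade Theorem~\ref{thm:main-block} to the twisted-coefficient form required by the Zeeman comparison, completing the argument. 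A minor secondary point is that passing from homotopy connectivity to rational homology isomorphism for the (not obviously simply connected) fibers is not entirely automatic; but since the connectivity $2(d-2)$ comfortably exceeds the range $d-2$, this can be absorbed into standard estimates.
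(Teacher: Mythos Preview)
Your approach matches the paper's: the paper's proof is literally the single sentence ``The combination of Theorem~\ref{thm:main-block} and Morlet's theorem proves our main result,'' so you have in fact written out more than the paper does, and the fibration plus Serre spectral sequence comparison is exactly the intended argument.

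You are right to flag the twisted-coefficient issue, which the paper does not address, but your proposed resolution via Charney's theorem is not the correct one. The coefficient module $\HH_q(\tDiff_D(M_g)/\Diff_D(M_g);\QQ)$ has no evident description in terms of tensor powers of the standard representation $H(\ZZ^g,\epsilon)$, so Theorem~\ref{thm:charney} does not apply to it. The actual fix is simpler: the local system is \emph{trivial} in the relevant range. By Morlet's lemma the inclusion $\tDiff_\partial(D_0)/\Diff_\partial(D_0)\to \tDiff_D(M_g)/\Diff_D(M_g)$ (extension by the identity) induces an isomorphism on $\HH_q$ for $q<2(d-2)$. Given any $[\varphi]\in\pi_0\tDiff_D(M_g)$, choose a representative $\varphi\in\Diff_D(M_g)$ (possible since $\pi_0\Diff_D=\pi_0\tDiff_D$ for simply connected manifolds of dimension $\geq 5$); the action on the fibre is then conjugation by $\varphi$. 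Since any two orientation-preserving embeddings of a disk into $\interior N$ are ambiently isotopic, $\varphi$ may be isotoped rel $\partial N$ to a diffeomorphism fixing $D_0$ pointwise, whence conjugation fixes the image of the disk inclusion elementwise. Thus the $\pi_0$-action on $\HH_q$ is trivial for $q<2(d-2)$, the $E^2$-term has constant coefficients, and Theorem~\ref{thm:main-block} applies as stated. Your remark about fundamental groups of the fibre is handled by the same observation: $\pi_0\Diff_D(M_g)\to\pi_0\tDiff_D(M_g)$ being surjective makes the fibre connected.
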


For oriented surfaces ($d=1$), the corresponding stability theorem has range $k<\frac{1}{3}(2g-1)$. Moreover, the forgetful map
\begin{equation} \label{5.2}
\HH_*(B\Diff_D(M_g^2))\rightarrow \HH_*(B\Diff(M_g^2))
\end{equation}
has a similar range of stability. For $d>2$ there is no analog of \eqref{5.2}. The isotopy extension theorem implies that
\begin{equation} \label{5.3}
\Diff_D(M_g) \rightarrow \Diff(M_g) \rightarrow \Emb(D,M_g)
\end{equation}
is a Serre fibration. Use of an exponential function shows that
$$\Emb(D,M_g) \simeq \Fr(TM_g),$$
the oriented frame bundle of the tangent bundle,
$$\SO(2d)\rightarrow \Fr(TM_g) \rightarrow M_g.$$
Since $M_g$ is $(d-1)$-connected the fiber in
\begin{equation} \label{eq:(32)}
\Fr(TM_g)\rightarrow B\Diff_D(M_g) \stackrel{f}{\rightarrow} B\Diff(M_g)
\end{equation}
has the same $(d-1)$-type as $\SO(2d)$. The non-zero homotopy groups of $\SO(2d)$ prevents a stability range for $f$, even rationally.

However, given the proof of Theorem 1.1 from \cite{GR-W} it is not to be expected that there would be a stability range for
$$\HH_*(B\Diff_D(M_g^{2d}))\rightarrow \HH_*(B\Diff(M_g^{2d})).$$
Here are a few words of explanation.

For $M = M_g^{2d}$, consider the spaces of bundle maps
$$\Bun^\partial(TM,\theta^*(U_{2d})) \subset \Bun(TM,\theta^*(U_{2d}))$$
where $\Bun(TM,\theta^*(U_{2d}))$ is the space of fiberwise isomorphisms
$$\xymatrix{TM \ar[d] \ar[r]^-{\hat{\hat{\tau}}} & \theta_{d+1}^*(U_{2d}) \ar[d] \\ M \ar[r]^-{\hat{\tau}} & BO(2d)[d+1,\infty)}$$
where $\hat{\tau}$ is a lifting of the tangent bundle map over $\theta_{d+1}$, cf. Section \ref{sec:intro} for notation. $\Bun^\partial(TM,\theta_{d+1}^*U_{2d})$ is the subspace where $\hat{\tau}$ and $\hat{\hat{\tau}}$ is fixed on $TM\mid_{d^{2d}}$. The diffeomorphism group $\Diff_D(M)$ acts on the subspace, and Theorem 1.1 really about the Borel (or stack) quotients
$$\Bun^\delta(TM,\theta_{d+1}^* U_{2d}) \dquot \Diff_D(M) = E\Diff_D(M)\times_{\Diff_D(M)} \Bun^\delta(TM,\theta_{d+1}^* U_{2d}).$$
But $\Bun^\delta(TM,\theta_{d+1}^* U_{2d})$ is contractible by easy obstruction theory, so
$$B\Diff_D(M) \simeq \Bun^\delta(TM,\theta_{d+1}^* U_{2d}) \dquot \Diff_D(M).$$
In contrast, the space $\Bun(TM,\theta_{d+1}^*U_{2d})$ is not contractible, so the stability theorem to be expected is that
$$\HH_*(\Bun^\partial(TM,\theta_{d+1}^*U_{2d}) \dquot \Diff_D(M);\QQ) \rightarrow \HH_*(\Bun(TM,\theta_{d+1}^*U_{2d}) \dquot \Diff(M);\QQ)$$
is an isomorphism in a range of dimensions, and this is in agreement with \eqref{eq:(32)}.

The use of Morlet's lemma of disjunction prevents the methods of this paper to improve the stability range for $\HH_*(B\Diff_D(M_g);\QQ)$ beyond $*<2(d-2)$. However, the stability range for $\HH_*(B\tDiff_D(M_g);\QQ)$ can be improved through a deeper analysis, involving more rational homotopy theory. We will return to this question in a sequel to this paper.


\begin{thebibliography}{99}
\bibitem{BL} H.J. Baues, J.-M. Lemaire, {\it Minimal models in homotopy theory}, Math. Ann. {\bf 225} (1977), no. 3, 219--242.

\bibitem{B1} A. Berglund, {\it Koszul spaces}, Trans. Amer. Math. Soc. (to appear)

\bibitem{B2} A. Berglund, {\it Rational homotopy theory of mapping spaces via Lie theory for L-infinity algebras}, arXiv:1110.6145v1 [math.AT]

\bibitem{Bo} S. Boldsen, {\it Improved homological stability for the mapping class group with integral or twisted coefficients}. Math. Z. (to appear)

\bibitem{BG} A.K. Bousfield, V.K.A.M. Gugenheim, {\it On $PL$ DeRham theory and rational homotopy type}, Mem. Amer. Math. Soc., vol. 8, No. 179, Amer. Math. Soc., Providence, R.I., 1976.

\bibitem{BK} A.K. Bousfield, D.M. Kan, {\it Homotopy limits, completions and localizations}. Lecture Notes in Mathematics, Vol. 304. Springer-Verlag, Berlin-New York, 1972.

\bibitem{B} W. Browder, Surgery on simply-connected manifolds. Ergebnisse der Mathematik und ihrer Grenzgebiete, Band 65. Springer-Verlag, 1972.

\bibitem{BS} E.H. Brown, R.H. Szczarba, {\it On the rational homotopy type of function spaces}, Trans. Amer. Math. Soc. {\bf 349}, (1997), no. 12, 4931--4951.

\bibitem{BFM} U. Buijs, Y. F\'elix, A. Murillo, {\it Lie models for the components of sections of a nilpotent fibration}, Trans. Amer. Math. Soc. {\bf 361} (2009), no. 10, 5601--5614.

\bibitem{BLR} D. Burghelea, R. Lashof, M. Rothenberg, {\it Groups of automorphisms of manifolds}. Lecture Notes in Mathematics, Vol. 473. Springer-Verlag, 1975.

\bibitem{Bog} R. B\o{}gvad, {\it Some elementary results on the cohomology of graded Lie algebras}. Algebraic homotopy and local algebra (Luminy, 1982), 156--166, Ast\'erisque, 113-114, Soc. Math. France, Paris, 1984.

\bibitem{BHM} M. B\"okstedt, W.C. Hsiang, I. Madsen, {\it The cyclotomic trace and algebraic $K$-theory of spaces}. Invent. Math. {\bf 111} (1993), no. 3, 465--539.

\bibitem{C} R. Charney, {\it A generalization of a theorem of Vogtmann}. J. Pure Appl. Algebra {\bf 44} (1987), no. 1-3, 107--125.

\bibitem{DGMS} P. Deligne, P. Griffiths, J. Morgan, D. Sullivan, {\it Real homotopy theory of K\"ahler manifolds}. Invent. Math. {\bf 29} (1975), 245--274.

\bibitem{Dold} A. Dold, {\it Partitions of unity in the theory of fibrations}. Ann. of Math. {\bf 78} (1963), 223--255.

\bibitem{DGM} B. Dundas, T. Goodwillie, R. McCarthy, {\it The local structure of algebraic $K$-theory}, (to appear).

\bibitem{FH} F.T. Farrell, W.C. Hsiang, {\it On the rational homotopy groups of the diffeomorphism groups of discs, spheres and aspherical manifolds}. Proc. Sympos. Pure Math., XXXII, Amer. Math. Soc., Providence, R.I., (1978), 325--337.

\bibitem{FHT-RHT} Y. F\'elix, S. Halperin, J-C. Thomas, {\it Rational Homotopy Theory}. Graduate texts in Mathematics 205, Springer, 2001.

\bibitem{GMTW} S. Galatius, U. Tillmann, I. Madsen, M. Weiss, {\it The homotopy type of the cobordism category}. Acta Math. {\bf 202} (2009), no. 2, 195--239.

\bibitem{GR-W} S. Galatius, O. Randal-Williams, {\it Stable moduli spaces of high dimensional manifolds}, arXiv:1201.3527v1 [math.AT].

\bibitem{G} E. Getzler, {\it Lie theory for nilpotent $L_\infty$-algebras}, Ann. of Math. (2) {\bf 170} (2009), no. 1, 271--301.

\bibitem{H} J.L Harer, {\it Stability of the homology of the mapping class groups of orientable surfaces}. Ann. of Math. (2) {\bf 121} (1985), no. 2, 215--249.

\bibitem{Haefliger} A. Haefliger, {\it Rational homotopy of the space of sections of a nilpotent bundle}. Trans. Amer. Math. Soc. {\bf 273} (1982), no. 2, 609--620.

\bibitem{Hilton} P.J. Hilton, {\it On the homotopy groups of the union of spheres}. J. London Math. Soc. {\bf 30}, (1955). 154--172.

\bibitem{HMR} P.J. Hilton, G. Mislin, J. Roitberg, {\it Localization of nilpotent groups and spaces}. North-Holland Mathematics Studies, No. 15. Notas de Matem\'atica, No. 55. North-Holland Publishing Co., Amsterdam-Oxford; American Elsevier Publishing Co., Inc., New York, 1975.

\bibitem{KM} M. Kervaire, J. Milnor, {\it Groups of homotopy spheres. I}. Ann. of Math. (2) {\bf 77} (1963), 504--537.

\bibitem{KS} R. Kirby, L. Siebenmann, Foundational essays on topological manifolds, smoothings, and triangulations. Annals of Mathematics Studies, No. 88. Princeton University Press, 1977.

\bibitem{K} M. Kreck, {\it Isotopy classes of diffeomorphisms of $(k-1)$-connected almost-parallelizable $2k$-manifolds}. Algebraic topology, Aarhus 1978 (Proc. Sympos., Univ. Aarhus, Aarhus, 1978), pp. 643--663,
Lecture Notes in Math., 763, Springer, 1979.

\bibitem{K1} M. Kreck, {\it Surgery and duality}. Ann. of Math. (2) {\bf 149} (1999), no. 3, 707--754.

\bibitem{M} I. Madsen, {\it Algebraic $K$-theory and traces}. Current developments in mathematics, 1995, 191--321, Int. Press, Cambridge, MA, 1994.

\bibitem{MW} I. Madsen, M. Weiss, {\it The stable moduli space of Riemann surfaces: Mumford's conjecture}. Ann. of Math. (2) {\bf 165} (2007), no. 3, 843--941.

\bibitem{Majewski} M. Majewski, {\it Rational homotopical models and uniqueness}, Mem. Amer. Math. Soc. {\bf 143} (2000), no. 682.

\bibitem{Mi} T. Miller, {\it On the formality of $(k-1)$-connected compact manifolds of dimension less than or equal to $4k-2$}. Illinois J. Math. {\bf 23} (1979), no. 2, 253--258.

\bibitem{NM} J. Neisendorfer, T. Miller, {\it Formal and coformal spaces}. Illinois J. Math. {\bf 22} no. 4 (1978), 565--580.

\bibitem{Priddy} S.B. Priddy, {\it Koszul resolutions}, Trans. Amer. Math. Soc. {\bf 152} (1970) 39--60.

\bibitem{Quillen} D. Quillen, {\it Rational homotopy theory}. Ann. of Math. (2) {\bf 90} (1969) 205--295.

\bibitem{Q} F. Quinn, A geometric formulation of surgery. PhD thesis, Princeton University, 1969.

\bibitem{RS} C.P. Rourke, B.J. Sanderson, {\it $\Delta$-sets. I. Homotopy theory}. Quart. J. Math. Oxford Ser. (2) {\bf 22} (1971), 321--338.

\bibitem{Steenrod} N. Steenrod, {\it The topology of fibre bundles}. Princeton Mathematical Series, vol. 14. Princeton University Press, 1951.

\bibitem{S} D. Sullivan, {\it Triangulating homotopy equivalences}. Thesis. Princeton University 1966.

\bibitem{Sullivan} D. Sullivan, {\it Infinitesimal computations in topology}. Inst. Hautes \'Etudes Sci. Publ. Math. No. 47 (1977), 269--331 (1978).

\bibitem{Tanre} D. Tanr\'e, {\it Homotopie rationnelle: mod\`eles de Chen, Quillen, Sullivan}, Lecture Notes in Mathematics, {\bf 1025}. Springer-Verlag, Berlin, 1983.

\bibitem{Wahl} N.Wahl, {\it Homological stability for the mapping class group of non-orientable surfaces}, Invent. Math. {\bf 171} (2008), 389--424.

\bibitem{W} C.T.C. Wall, Surgery on compact manifolds. London Mathematical Society Monographs, No. 1. Academic Press, 1970.

\bibitem{Wall} C.T.C. Wall, {\it Classification problems in differential topology. II. Diffeomorphisms of handle bodies}. Topology {\bf 2} (1963), 263--272.

\bibitem{Wall3} C.T.C. Wall, {\it Classification problems in differential topology. III. Applications to special cases}. Topology {\bf 3} (1965), 291--304.

\bibitem{JRW} F. Waldhausen, B. Jahren, J. Rognes, {\it Spaces of $PL$ manifolds and categories of simple maps}.

\bibitem{WW} M. Weiss, B. Williams, {\it Automorphisms of manifolds and algebraic $K$-theory. I}. $K$-Theory {\bf 1} (1988), no. 6, 575--626.
\end{thebibliography}
\end{document}